\theoremstyle{plain}
\newtheorem{corollary}{Corollary}[section]
\newtheorem{definition}{Definition}[section]
\newtheorem{lemma}{Lemma}[section]
\newtheorem{proposition}{Proposition}[section]
\newtheorem{theorem}{Theorem}[section]
\numberwithin{equation}{section}
\DeclareMathOperator{\Dom}{Dom}
\date{\today}
\begin{document}
\title[Quantum mechanics on Laakso spaces]{Quantum mechanics on Laakso spaces}

\author[C.~Kauffman]{Christopher J.~Kauffman}
\address[C.~Kauffman]{Department of Mathematics, University of Rochester, Rochester, NY  14627, USA}
\email[C.~Kauffman]{ckauffma@u.rochester.edu}

\author[R.~Kesler]{Robert M.~Kesler}
\address[R.~Kesler]{Department of Mathematics, Princeton University, Princeton, NJ  08544, USA}
\email[R.~Kesler]{rkesler@princeton.edu}

\author[A.~Parshall]{Amanda G.~Parshall}
\address[A.~Parshall]{Department of Mathematics, York College of Pennsylvania, York, PA  17403-3651, USA}
\email[A.~Parshall]{aparshal@ycp.edu}

\author[E.~Stamey]{Evelyn A.~Stamey}
\address[E.~Stamey]{Department of Mathematics, Ithaca College, Ithaca, NY  14850, USA}
\email[E.~Stamey]{estamey1@ithaca.edu}

\author[B.~Steinhurst]{Benjamin A.~Steinhurst}
\address[B.~Steinhurst]{Department of Mathematics, Cornell University, Ithaca, NY 14853-4201, USA}
\email[B.~Steinhurst]{steinhurst@math.cornell.edu}
\urladdr{\url{http://www.math.cornell.edu/~steinhurst/}}
\thanks{The authors are supported by the NSF through grant DMS-0505622.}

\subjclass[2000]{Primary 28A80; Secondary 35P05, 35J05}
\keywords{Laplacian, fractal, Casimir effect}

\begin{abstract}
We first review the spectrum of the Laplacian operator on a general Laakso Space before considering modified Hamiltonians for the infinite square well, parabola, and Coulomb potentials. Additionally, we compute the spectrum for the Laplacian and its multiplicities when certain regions of a Laakso space are compressed or stretched and calculate the Casimir force experienced by two uncharged conducting plates by imposing physically relevant boundary conditions and then analytically regularizing the result. Lastly, we derive a general formula for the spectral zeta function and its derivative for Laakso spaces with strict self-similar structure before listing explicit spectral values for cases of interest.  

\end{abstract}

\maketitle

\section{Introduction}

Laakso spaces are introduced in \cite{Laakso2000} as a quotient space of the cartesian product of the unit interval with the middle thirds Cantor set, and in \cite{ST08} it is shown that such spaces can be constructed as the projective limit of quantum graphs, verifying a comment in \cite{BarlowEvans2004}. The motivation behind the projective limit is the construction of Markov processes and associated infinitesimal generators; moreover, the spectrum of the Laplacian generating the most natural of these processes is given in \cite{ST08} by using the quantum graph approximations to construct a complete set of eigenfunctions. Here, we build on these previous results by performing calculations motivated by analogy to Quantum Mechanics in the setting of Laakso spaces. Other authors have investigated similar questions on finitely ramified fractals in \cite{FKS09,S09,BajorinEtAl2008} and more directly in \cite{ADT2009,ADT2010}.

After a preliminary discussion in Section \ref{PB}, which recalls the definition of Laakso spaces and the derivation of the Laplacian, $\Delta_L$, the body of the paper falls into three parts. The first is Section \ref{sect: SA} in which the spectrum of a Hamiltonian operator of the form $H=\Delta +V(x)$ is studied numerically in three cases (infinite square well, parabolic well, and Coulomb) and analytically in the infinite square well case, as well. Then in Section \ref{sect:CasimirForce}, we introduce analytic regularization techniques used in physics literature and Number Theory as a tool to calculate an analogue to the Casimir effect \cite{Casimir1948}, which describes a Quantum Mechanical force experienced by perfectly conducting uncharged plates. Then in Section \ref{sect: Zeta} the spectral zeta function associated to the Laplacian $\Delta_L$ is studied in greater detail, and, in circumstances where the Laakso space is strictly self-similar, specific values are calculated directly. 

\section{Preliminary Background} \label{PB}

\subsection{Laakso Spaces}

Laakso spaces are defined in \cite{Laakso2000} as a quotient space of $I \times K$ by an iteratively defined series of identifications, where \emph{K} is the Cantor set, $I$ is the unit interval,  and $\iota: I \times K \rightarrow L$ is the quotient map. Such spaces are also constructable as projective limits of finite quantum graphs $\{F_{n}\}$, as in \cite{ST08}. The projective limit construction provides a convenient  approximation of any Laakso space.  According to \cite{ST08}, the construction of $F_{n}$ is specified by an equivalence relation that is encoded by a sequence of integers $\{j_{n}\}_{n=1}^\infty$, which give the number of identifications---or subdivisions of each cell---at the $n^{th}$ level of construction.

\begin{figure}[htb]
\begin{center}
\epsfig{file=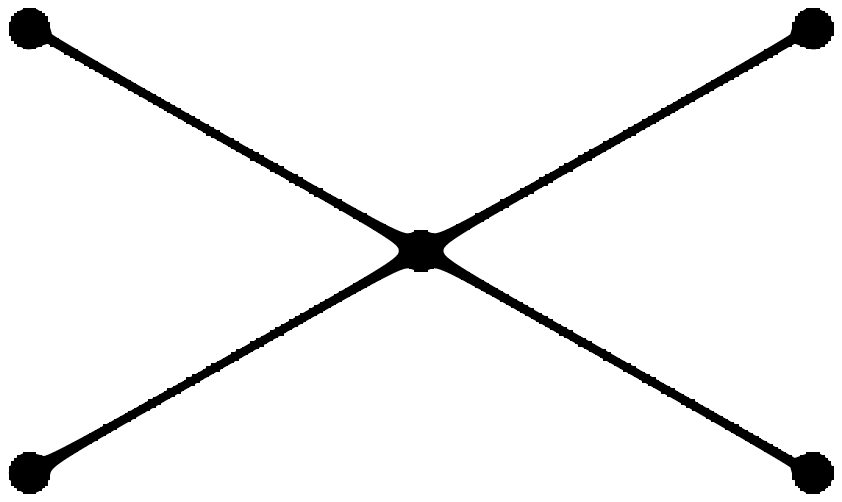, width=3.5cm}\hspace{1cm}
\epsfig{file=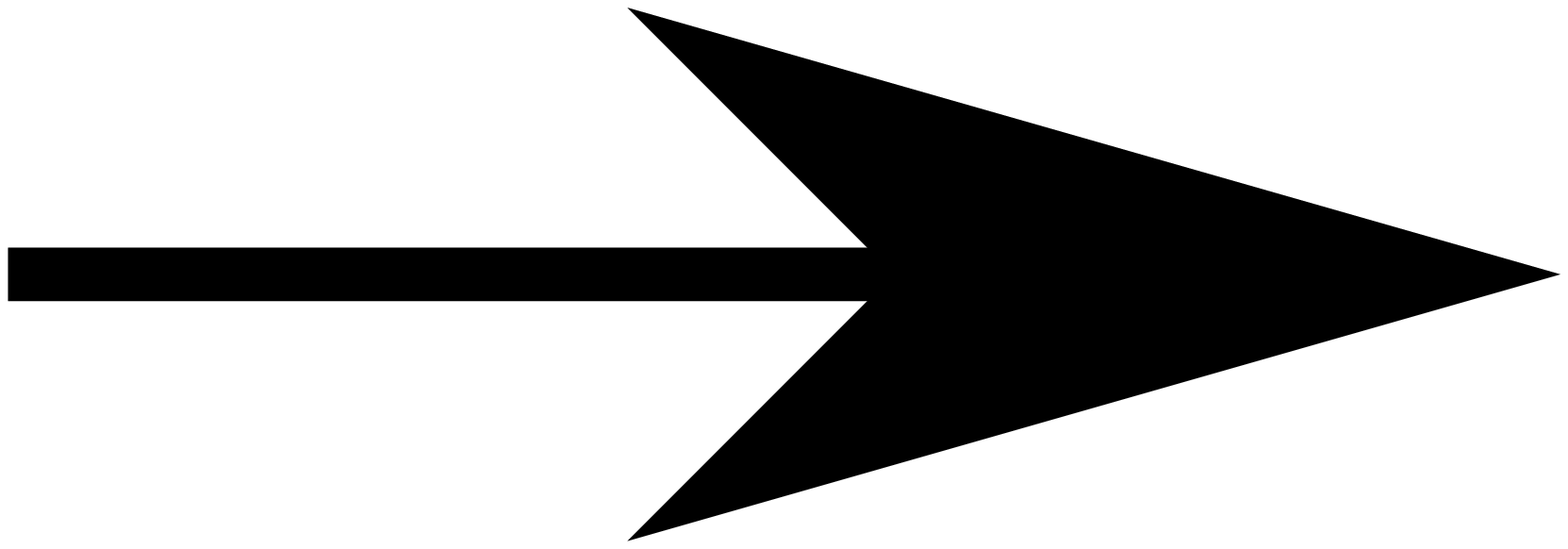, width=1.5cm}\hspace{1cm}
\epsfig{file=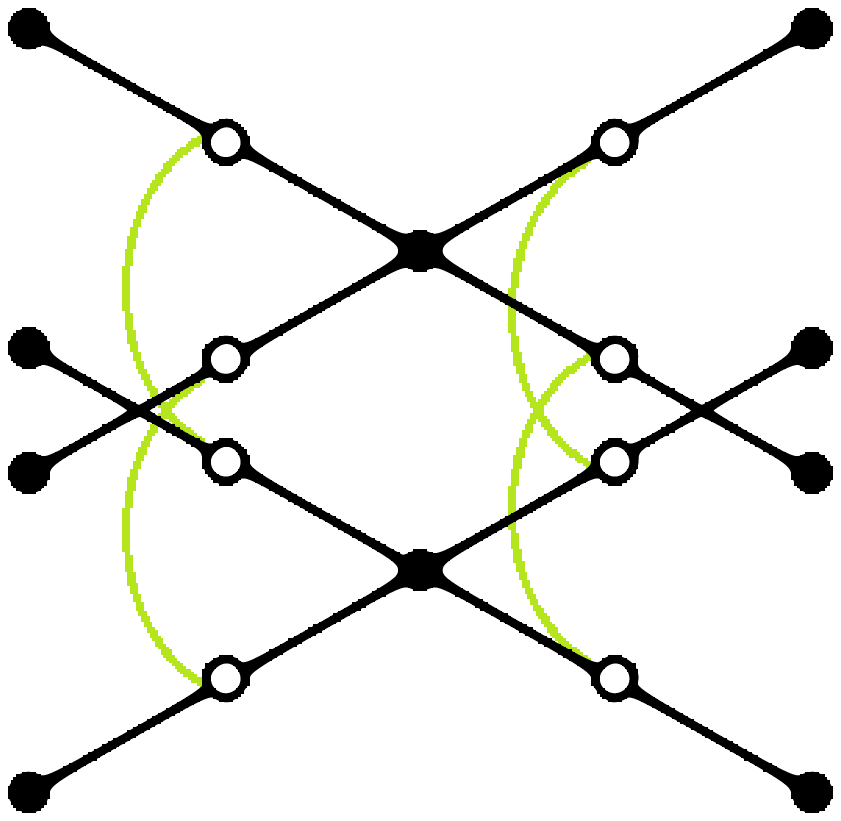, width=3.5cm}\vspace{1cm}
\end{center}
\end{figure}
\vspace{-1.2cm}
\hspace{2cm} $F_{1} \hspace{6.1cm} F_{2}$

\begin{figure}[htb]
\begin{center}
\epsfig{file=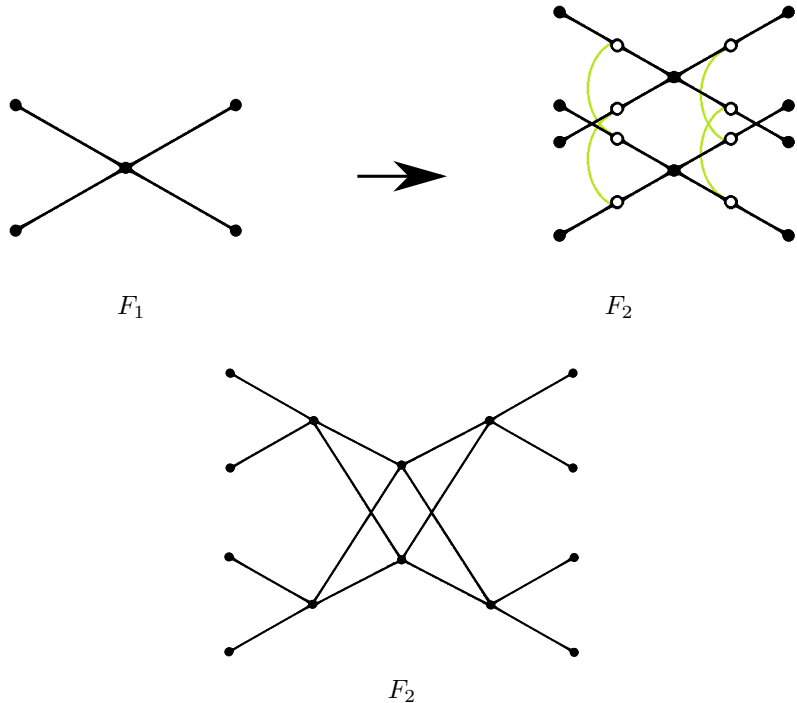, width=5cm}\\
$F_{2}$
\end{center}
\caption[projective limit construction]{Construction of $F_{2}$ from $F_{1}$ with $j_{1}=j_{2}=2$.}
\label{projective limit construction}
\end{figure}

The most elementary approximation is $F_{0}$=$[0,1]$.  We construct $F_{n+1}$ by dividing each interval in $F_{n}$ into $j_{n}$ equal subintervals, the new nodes identify the boundaries of these newly-formed subintervals.  Next, duplicate $F_{n}$ and connect each new node to the corresponding node in the other copy.  For convenience in the counting arguments to come, align the nodes in columns. This ensures that each $F_{n}$ is vertically and horizontally symmetric. The following quantity will be used frequently:
\begin{equation}
I_{n}=\prod_{i=1}^n j_{i},
\end{equation}
where $I_{0}=1$.   (See Figure \ref{projective limit construction}.) The sequence of quantum graphs $\{F_{n}\}$, $n \geq 0$, approximates a specific Laakso space, where the depth of approximation increases as \emph{n} increases. A Laakso space has other important properties which can be expressed in terms of the sequence $\{ j_{n} \}$.  For example, the Hausdorff dimension of a Laakso space is
\begin{equation}
Q_{L}=\lim_{n \rightarrow \infty}\left(1+\frac{\log(2^n)}{\log(I_n)} \right),
\label{Hdimension}
\end{equation}
provided the limit exists.  If $\{j_i\}_{i=1}^{\infty}$ is a repeating sequence with period \emph{T}, equation \ref{Hdimension} conveniently reduces to
\begin{equation}
Q_{L}=1+\frac{\log(2^T)}{\log(I_T)}.
\end{equation}
Furthermore, each $F_{n}$ can be decomposed into three distinct shapes---V's, loops, and crosses (see Figure \ref{A V, a loop, and a cross})---the counts of which are necessary in determining the spectrum of the square well Hamiltonian in Subsection \ref{subsect: SW}  and an arrangement of conducting plates in Subsection \ref{sect:CasimirForce}.

\begin{figure}[htb]
\begin{center}
\epsfig{file=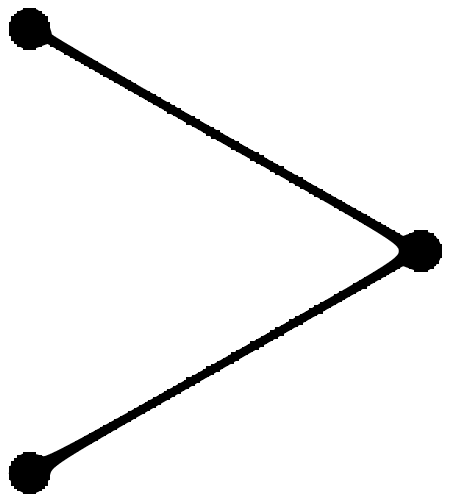, width=1.7cm} \hspace{1.5cm}
\epsfig{file=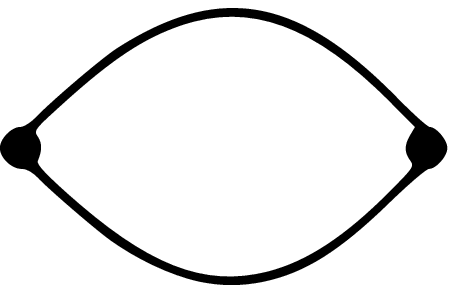, width=1.75cm} \hspace{1.5cm}
\epsfig{file=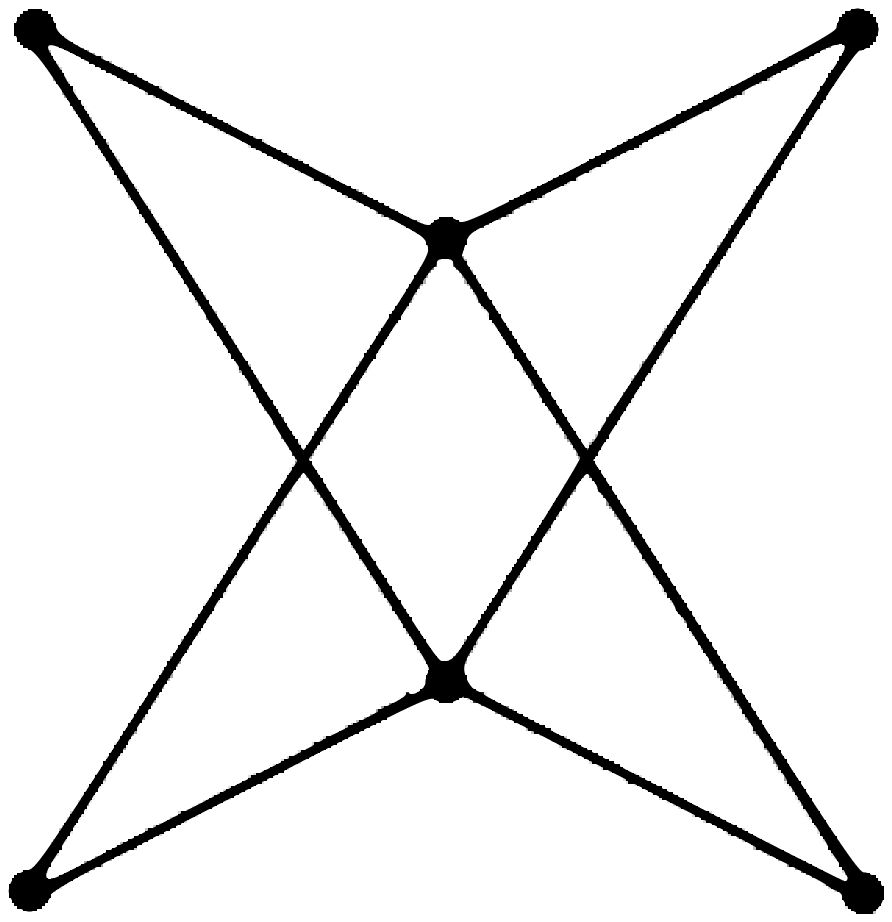, width=3.2cm}
\end{center}
\caption[A V, a loop, and a cross]{A V, a loop, and a cross}
\label{A V, a loop, and a cross}
\end{figure}

\subsection{Laplacian}
In \cite{ST08} the Laplacian $\Delta$ is constructed on a Laakso space as the minimal self-adjoint extension of a compatible sequence of operators $\left\{ A_{n} \right\}$, where each $A_{n}$ acts by $-\frac{d^{2}}{dx_{e}^{2}}$ along edges in the $F_{n}$ quantum graph approximation. Specifically, $\Delta$ acts by
\begin{equation}
\iota^{*}\Delta[f]=\left( -\frac{d^{2}}{dx^{2}} \right) \iota^{*}[f]~\text{where}~x \in \emph{I}~and~\iota: \emph{I} \times \emph{K} \rightarrow \emph{L}~ \text{is the quotient map.} 
\end{equation}
Moreover,  the spectrum of this self-adjoint operator can be decomposed into the union of eigenvalues in an orthogonal basis on each quantum graph, and in \cite{BDMS10} the eigenvalues of $\Delta$ on a Laakso space with associated sequence $\{ j_{n} \}$ are explicitly shown to be 
\begin{eqnarray}  \label{eq: Lspectrum}
\sigma(\Delta) = &\bigcup_{k=0}^\infty \left\{ \pi^{2} k^{2} \right\} \cup \bigcup_{n=1}^\infty \bigcup_{k=0}^\infty \left\{ (k+1/2)^{2} \pi^{2} I_{n}^{2} \right\}  \cup \bigcup_{n=1}^\infty \bigcup_{k=1}^\infty \left\{k^{2} \pi^{2} I_{n}^{2} \right\}  \notag \\
 & \cup \bigcup_{n=2}^\infty \bigcup_{k=1}^\infty \left\{ k^{2} \pi^{2} I_{n}^{2} \right\} \cup \bigcup_{n=2}^\infty \bigcup_{k=1}^\infty \left\{ \frac{k^{2}\pi^{2} I_{n}^{2}}{4} \right\} 
\end{eqnarray} 
with respective multiplicities:
\begin{equation*} 
1, 2^{n}, 2^{n-1}(j_{n}-2)I_{n-1}, 2^{n-1}(I_{n-1}-1), 2^{n-2}(I_{n-1}-1).
\end{equation*}

\subsection{Quantum Mechanics}
The mathematical formalism of Quantum Mechanics consists of two fundamental types of objects: (1) normalized state vectors commonly denoted by $\mid \psi \rangle$, which represent physical systems and reside in a complex separable Hilbert space; and (2) self-adjoint  operators, which act on state vectors, represent various observable quantities, and whose spectra correspond to the possible values of the measurement of those observables \cite{Griffiths2005}. For example, the position and momentum operators act by $\hat{x}[f]=xf$ and $\hat{p}[f]=-i \hbar \frac{ d}{dx}f$, respectively. In accordance with classical mechanics, we formulate energy as a function of momentum and position, namely $E=\frac{p^{2}}{2m}+V(x)$, and then associate to each observable its corresponding operator. In this way, we motivate the Hamiltonian energy operator $\hat{H}$, which acts by 
\begin{equation}
\hat{H}[f]=\left(\frac{p^{2}}{2m}+V(x,t) \right)[f]=\left( -\frac{\hbar^{2}}{2m}\frac{d^{2}}{dx^{2}} +V(x,t) \right)[f].
\end{equation}
Despite the fact that the domain of each self-adjoint operator is a complex separable Hilbert space, the domains for different operators are, in general, distinct. In the case of the position and momentum spaces, the appropriate domain is $L^{2}(\mathbb{R}^{n})$ with natural  inner product $\langle \phi \mid \psi \rangle=\int_{\mathbb{R}^{n}} \phi\psi^{*} ~dx$ and normalization condition $\langle \psi \mid \psi \rangle=\int_{\mathbb{R}^{n}} \left| \psi \right| ^{2} ~dx=1$. In fact, there is a probabilistic interpretation to the inner product on the position and momentum spaces. If $\mid \psi \rangle$ represents a particle in position or momentum space, then the probability that a measurement of that particle's position or momentum will fall in the interval $[A,B]$ is $ \int_{A}^{B} \left| \psi \right| ^{2} ~dx$ \cite{Griffiths2005}. The normalization condition then arises out of the necessity that a measurement of either the particle's position or momentum--- but not both simultaneously--- will be observed to have some well-defined value. Additionally, if $\left\{ \psi_{n} \right\} $ is an orthonormal basis of eigenfunctions with associated eigenvalues $\left\{\lambda_{n} \right\}$ for the self-adjoint operator $\emph{A}$ with corresponding observable $O_{A}$, then any normalized state vector $\mid \psi\rangle  \in \Dom{(A)}$ can be written as a linear combination of orthonormal eigenfunctions, i.e. $\mid \psi \rangle=\sum_{i=1}^\infty a_{i} \psi_{i}$ with the following probabilistic interpretation: a measurement of $O_{A}$ for the system represented by $\mid\psi \rangle$ will yield the value $\lambda_{n}$ with probability $a_{n}^{2}$. If a particular eigenvalue of $\emph{A}$ is degenerate, then the total probability of observing that eigenvalue is given by adding all associated $a_{i}^{2}$. It follows from these considerations that the expected value of the observable $O_{A}$ is 
\begin{equation}
\langle A \rangle= \langle \psi \mid A \mid \psi \rangle.
\end{equation} 

In keeping with this discussion, we interpret the eigenvalues associated with the infinite square well, parabolic, and Coulomb potentials in Subsections \ref{subsect: SW} and \ref{subsect: CP} as the set of allowable energy measurements for a particle affected by those potentials. In Section \ref{sect:CasimirForce}, eigenvalues represent the permissible energy states for eigenfunctions in the presence of conducting plates.

\section{Spectral Analysis of Hamiltonians} \label{sect: SA}
In this section, we examine the spectrum of three different Hamiltonians. Theorem \ref{thrm: SWspectrum} gives the spectrum and associated multiplicities of the Laplacian with an infinite square well potential. The following two subsections consider the Laplacian with a parabolic potential and a coulomb potential, respectively. In the last subsection, we discuss the numerical approximations of the spectra accompanied by some data.

\subsection{Infinite Square Well Potential} \label{subsect: SW}
Let $x$ be the coordinate on $L$ and $F_n$ depending solely on the ``horizontal'' direction. The function $V(x)$ on $L$ is described so that $\iota^{*}(V)(x,w)$ depends only on $x$ and not on $w$. We first discuss the infinite square well Hamiltonian $H_{SW}$, where

\begin{equation} 
V_{SW}(x) = \left\{ 
 \begin{array}{rl} 
   \infty & : x \hspace{1 mm} \in \hspace{1 mm}[0, \frac{1}{4}) \cup 
(\frac{3}{4}, 1]\\ 
   0 & : x \hspace{1 mm} \in \hspace{1 mm}[\frac{1}{4}, \frac{3}{4}],\\ 
 \end{array} 
\right. 
\end{equation} 
and 
\begin{equation}
H_{SW}[f]=\left( \Delta + V_{SW} \right)[f].
\end{equation}

\begin{definition}
The differential operator $A_{n}$ acts on $F_{n}$ by 
\begin{equation*}
A_{n}[f]= \left(-\frac{d^{2}}{dx_{e}^{2}}+V_{SW}\right) [f]
\end{equation*}
along each edge e with $\Dom(A_{n})=\left\{ \right. f \in C(F_n) \mid f \in H^{2}(e)~\forall e$, except at the square well boundary where f may have a discontinuous derivative, and $f(x)=0~\forall x$ outside of the square well  $\left. \right\}$. \\
\end{definition}

\begin{theorem}
\label{SpectrumOrtho}
Let $\Phi_{n}$ be the projection of the Laakso space onto $F_{n}$ and $D_{n}=\left\{ f \circ  \Phi_{n}| f \in \Dom(A_{n}) \right\}$. Define $D_{0}^{\prime}=D_{0}=\Phi_{0}^{*} \Dom(A_{0})$ and $D_{n}^{\prime}=D_{n-1}^{\prime \perp} \cap D_{n}$. Since $(H_{SW}, \Dom(H_{SW}))$ is the minimal self-adjoint extension of the projective system  $\left( A_{n}, \Dom(A_{n})\right) $, 
\begin{equation*}
\sigma(H_{SW})=\bigcup_{n=0}^\infty \sigma(A_{n}|_{D_{n}^{\prime}}).
\end{equation*}
\end{theorem}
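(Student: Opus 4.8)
The plan is to derive the statement from the general principle that a minimal self-adjoint extension of a projective (compatible) system of operators splits as an orthogonal direct sum over the ``new'' subspaces of the tower. Since the theorem already grants that $(H_{SW},\Dom(H_{SW}))$ is the minimal self-adjoint extension of the system $(A_n,\Dom(A_n))$, the actual work is to check that the square-well modifications of the domains leave that machinery intact, and then to read off the spectrum.

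\emph{Step 1 (orthogonal decomposition of $L^2(L)$).} The quotient maps $\phi_n\colon F_n\to F_{n-1}$ satisfy $\Phi_{n-1}=\phi_n\circ\Phi_n$, so with the natural measure normalizations the pullbacks $\Phi_n^{\ast}$ are isometric embeddings and $\Phi_{n-1}^{\ast}L^2(F_{n-1})\subseteq\Phi_n^{\ast}L^2(F_n)$ inside $L^2(L)$; in particular $D_{n-1}\subseteq D_n$. Because $L$ is the projective limit of the $F_n$, cylinder functions (those factoring through some $\Phi_n$) are dense, so $\overline{\bigcup_n D_n}=L^2(L)$. Reading $D_n'$ as the orthogonal complement of $\overline{D_{n-1}}$ in $\overline{D_n}$, this yields the orthogonal decomposition $L^2(L)=\bigoplus_{n=0}^{\infty}\overline{D_n'}$.

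\emph{Step 2 (each summand reduces $H_{SW}$).} The pulled-back operator $A_n$ is symmetric on $D_n$ for the $L^2(L)$ inner product and, by the compatibility of the system, carries $D_{n-1}$ into itself with $A_n|_{D_{n-1}}=A_{n-1}$. The only new points to verify for the modified domains are that the Dirichlet-type conditions at the well walls $x=1/4,\,3/4$ and the vanishing of functions outside $[1/4,3/4]$ involve only the horizontal coordinate $x$, hence are respected by every $\phi_n$ and $\Phi_n$; so the modified tower $\{\Dom(A_n)\}$ remains compatible. Symmetry together with invariance of $D_{n-1}$ then forces $A_n$ to preserve $D_n'$, so each $\overline{D_n'}$ reduces the self-adjoint operator $H_{SW}$, and $H_{SW}|_{\overline{D_n'}}$ is unitarily equivalent via $\Phi_n^{\ast}$ to $A_n|_{D_n'}$; minimality of the extension guarantees $H_{SW}$ introduces no further spectrum, i.e. $H_{SW}=\bigoplus_n A_n|_{D_n'}$ under this decomposition.

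\emph{Step 3 (conclusion).} For a self-adjoint orthogonal direct sum one has $\sigma\big(\bigoplus_n T_n\big)=\overline{\bigcup_n\sigma(T_n)}$; applied with $T_n=A_n|_{D_n'}$ this gives $\sigma(H_{SW})=\overline{\bigcup_{n\ge 0}\sigma(A_n|_{D_n'})}$. Each $A_n$ is a regular Sturm--Liouville operator on a finite metric graph, so $\sigma(A_n|_{D_n'})$ is a discrete set of finite-multiplicity eigenvalues tending to $+\infty$, and because the level-$n$ scalings separate, the union accumulates only at infinity and the closure may be dropped, yielding the stated equality. I expect the main obstacle to be Step 2: justifying rigorously that the self-adjoint operator $H_{SW}$---and not merely the symmetric $A_n$---reduces along the $\overline{D_n'}$ in the presence of the unbounded infinite-well potential, and pinning down the unitary equivalence $H_{SW}|_{\overline{D_n'}}\cong A_n|_{D_n'}$; this is precisely where one uses that $V_{SW}$ depends only on the horizontal coordinate and hence commutes with all the $\Phi_n$, reducing the argument to the potential-free situation already established for $\Delta_L$.
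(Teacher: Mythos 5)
Your proposal is correct and follows essentially the same route as the paper, whose proof simply defers to the free-case argument of \cite{ST08}: you reproduce that machinery (orthogonal decomposition into the subspaces $D_n'$, reduction of the minimal self-adjoint extension, spectrum of the direct sum) and add the one observation the paper leaves implicit, namely that $V_{SW}$ depends only on the horizontal coordinate so the modified domains still form a compatible projective system. Your extra care in Step 3 about dropping the closure (the level-$n$ ground eigenvalues grow like $I_n^2$, so the union has no finite accumulation points) is a detail the paper does not spell out but is consistent with it.
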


\begin{proof}
The proof of this theorem follows closely from the free case proven in \cite{ST08}.
\end{proof}

Theorem  \ref{thrm: SWspectrum} gives the spectrum and associated multiplicites of this Hamiltonian, which we prove in the remaining portion of this subsection. 

\begin{definition}
The expression $w_{n}=\frac{1}{4}(I_{n})$ denotes the number of columns between $x=0$ and $x=\frac{1}{4}$. Let $d_{n}$ be the x-distance from the wall of the square well to the nearest column of nodes inside the square well; this is well defined by the symmetry of $L$.
\end{definition}

To distinguish one set of loops from another in an arbitrary row, we assign each set a number $m=\{1, 2, \cdots, I_{n-1} \}$, counting from left to right.  Similarly, we distinguish one cross in an arbitrary row by assigning each a number $l=\{1, 2, \cdots, I_{n-1}-1 \}$.

\begin{theorem} \label{thrm: SWspectrum} 
Given any Laakso space, $L$, with associated sequence $\{j_i\}$, the spectrum of $H_{SW}$, $\sigma(H_{SW})$, is

\begin{eqnarray}\small 
\bigcup_{k=1}^{\infty} \{4 \pi^2 k^2\}\cup \bigcup_{k=1}^{\infty} \left\{ \frac{ k^{2} \pi^{2}} {d_{1}^{2}} \right\} \cup \bigcup_{k=1}^{\infty} \left\{ 9 k^{2} \pi^{2} \right\} \cup
\bigcup_{n=1}^\infty \bigcup_{k=1}^{\infty} \left\{  \frac{k^2  \pi^2}{d_{n}^{2}}   \right\}  \cup\bigcup_{n=1}^\infty 
\bigcup_{k=1}^\infty \{k^2\pi^2I_n^2\} \notag\\ \cup \bigcup_{n=2}^{\infty} \bigcup_{k=1}^{\infty}  \left\{ \frac{ k^{2} \pi^{2}}{d_{n}^{2}} \right\} \cup \bigcup_{n=2}^{\infty} \bigcup_{k=1}^{\infty} \left\{ k^{2} \pi^{2} I_{n}^{2} \right\} \cup \bigcup_{n=2}^{\infty} \bigcup_{k=1}^\infty  \left\{ \frac{ k^{2} \pi^{2}}{(d_{n}+\frac{1}{I_{n}})^{2}} 
 \right\}  \notag\\  \cup \bigcup_{n=2}^{\infty} \bigcup_{k=1}^\infty \{k^2\pi^2I_n^2\} \cup \bigcup_{n=2}^\infty 
\bigcup_{k=1}^{\infty} \left\{ \frac{k^2\pi^2 I_n^2}{4} \right\} \notag \\
\end{eqnarray}

\noindent Eigenvalues in these ten sets have the following respective multiplicities: 
\begin{enumerate}
\item[1)] $1$;
\item[2)] $2$~\text{if}~$\left(  j_{1} \in \left\{ 2,3 \right\} \right)$~\text{and}\\
$0$~\text{otherwise};
\item[3)] $1$~\text{if}~$\left( j_{1}=3 \right)$~\text{and}\\
$0$~\text{otherwise};
\item[4)] $2^{n}$~\text{if}~$ \left( d_{n} \neq 0~\text{and}~(m-1)j_{n}+1 < w_{n} < mj_{n}-1 \right)$~\text{and}\\
$0$~\text{otherwise};
\item[5)] $2^{n-1}(j_{n}-2) I_{n-1} -2^{n} (1+ \lceil w_{n} \rceil -2m)$~\text{if}~$\left( (m-1)j_{n}+1 \leq w_{n} \leq mj_{n}-1 \right)$, \\
$2^{n-1}(j_{n}-2)I_{n-1}-m2^{n}(j_{n}-2)$~\text{if}~$ \left( mj_{n}-1 \leq w_{n} \leq mj_{n}+1 \right)$, ~\text{and} \\
$0$~\text{otherwise};
\item[6)] $2^{n-1}$~\text{if}~$\left( d_{n} \neq 0~\text{and}~mj_{n}-1 < w_{n} < mj_{n}+1 \right)$ ~\text{and}\\
$0$~\text{otherwise};
\item[7)] $2^{n-1}$~\text{if}~$\left( mj_{n}-1<w_{n} \leq mj_{n} \right)$ ~\text{and}\\
$0$~\text{otherwise};
\item[8)] $2^{n-1}$~\text{if}~$\left( mj_{n}-1< w_{n} < mj_{n} \right)$~\text{and}\\
$0$~\text{otherwise};
\item[9)] $2^{n-1}(I_{n-1}-1)-(m-1)2^{n}$~\text{if}~$\left( (m-1)j_{n}+1\leq w_{n} \leq mj_{n}-1 \right)$,\\
$2^{n-1}(I_{n-1}-1)-m2^{n}$~\text{if}~ $\left( mj_{n}-1<w_{n} \leq mj_{n}+1 \right)$, ~\text{and}\\
$0$~\text{otherwise};
\item[10)] $2^{n-2}(I_{n-1}-1)-(m-1)2^{n-1}$~\text{if}~$\left( (m-1)j_{n}+1\leq w_{n} \leq mj_{n}-1 \right)$, \\
$2^{n-2}(I_{n-1}-1)-m2^{n-1}$~\text{if}~$\left( mj_{n}-1<w_{n} \leq mj_{n}+1 \right)$, ~\text{and} \\
$0$~\text{otherwise}.
\end{enumerate}
\end{theorem}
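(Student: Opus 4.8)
The plan is to mirror the treatment of the free Laplacian in \cite{ST08,BDMS10}. By Theorem \ref{SpectrumOrtho} it suffices to compute $\sigma(A_n|_{D_n'})$ for each $n \ge 0$ and take the union, so the argument proceeds level by level. Fix $n$. Membership of a function in $D_n' = D_{n-1}'^{\perp} \cap D_n$ is governed by the same vanishing-and-parity conditions at the $F_{n-1}$ node-columns and across the successive doublings that are used in \cite{ST08}, together with the new requirement from $\Dom(A_n)$ that the function vanish identically outside the well $[\tfrac14,\tfrac34]$ (equivalently, a Dirichlet condition at $x=\tfrac14$ and $x=\tfrac34$ and support between them). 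With these constraints an eigenfunction of $A_n|_{D_n'}$ is a sine/cosine on each edge, and the eigenvalue problem splits into a finite collection of one-dimensional Dirichlet problems on the building blocks of $F_n$ — the V's, loops and crosses of Figure \ref{A V, a loop, and a cross} — now restricted to the region $\tfrac14 \le x \le \tfrac34$.

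I would then sort the blocks of $F_n$ into those lying strictly inside the well and those cut by a wall; note in passing that the V's, which in the free case sit at $x=0$ and $x=1$, lie entirely outside the well, which is why the half-integer eigenvalues $(k+\tfrac12)^2\pi^2 I_n^2$ of \eqref{eq: Lspectrum} do not appear in $\sigma(H_{SW})$. A block strictly inside the well contributes exactly its free-case eigenvalues: each edge of length $1/I_n$ (the horizontal edges of loops and crosses) contributes $k^2\pi^2 I_n^2$, and each long edge of length $2/I_n$ (the diagonal of a cross) contributes $k^2\pi^2 I_n^2/4$; this yields sets 5), 7), 9) and 10). The genuinely new ingredient is the behaviour at a wall: because $x = \tfrac14$ lies at distance $d_n$ from the nearest column of interior nodes and $d_n + \tfrac1{I_n}$ from the next, a block severed by the wall becomes a Dirichlet segment of length $d_n$ or $d_n + \tfrac1{I_n}$, contributing $k^2\pi^2/d_n^2$ or $k^2\pi^2/(d_n+\tfrac1{I_n})^2$; this produces sets 4), 6) and 8). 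The coarsest levels are handled directly and supply sets 1), 2) and 3): $F_0 = [0,1]$ restricted to $[\tfrac14,\tfrac34]$ is a Dirichlet interval of length $\tfrac12$, giving $4\pi^2 k^2$ with multiplicity $1$; and at $n=1$ the well is wide enough to contain a nontrivial new block only when $j_1 \in\{2,3\}$, giving $k^2\pi^2/d_1^2$ (and, when $j_1 = 3$, also $9k^2\pi^2$).

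The multiplicities follow by counting these blocks. The free-case counts $1,\,2^n,\,2^{n-1}(j_n-2)I_{n-1},\,2^{n-1}(I_{n-1}-1),\,2^{n-2}(I_{n-1}-1)$ of \eqref{eq: Lspectrum} split according to where the wall falls. Writing $w_n = \tfrac14 I_n$ for the column index of the wall and comparing it with the $F_{n-1}$-column indices $m j_n$ (where $m = 1,\dots,I_{n-1}$ indexes the loop-sets and $l = 1,\dots,I_{n-1}-1$ the crosses, as in the statement), one gets three regimes: $d_n = 0$, i.e.\ the wall sits on a column; $(m-1)j_n+1 \le w_n \le m j_n - 1$, i.e.\ the wall lies between grid lines in the interior of an $F_{n-1}$-cell; and $m j_n - 1 \le w_n \le m j_n + 1$, i.e.\ the wall lands in the two-edge neighbourhood of an $F_{n-1}$-column where a cross and a loop straddle it. In each regime one subtracts from the free counts the blocks that the wall removes (the factor $m$ or $m-1$ tracking how many whole loop-sets or crosses lie outside the well) and, when $d_n \ne 0$, restores one truncated block; the powers $2^n$, $2^{n-1}$, $2^{n-2}$ record the number of parallel copies of that block at scale $n$. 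Carrying this bookkeeping through all ten block types gives the stated multiplicities, and one checks, exactly as in \cite{ST08}, that these level contributions are mutually exhaustive and that nothing further arises in the large-$n$ limit, since the $I_n^2$ growth of the eigenvalues controls that limit as in the free case.

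The delicate point — and the step I expect to be the main obstacle — is precisely this last count: the walls slice the self-similar block structure in a way that interacts with two different grids simultaneously, so the boundary cases (equalities in the inequalities above, the vanishing of $d_n$, small values of $j_n$, and the mutual influence of the two symmetric walls) must be treated with care to avoid both overcounting and missing eigenfunctions. In particular one must verify that each eigenfunction attached to a truncated block genuinely survives the orthogonality condition defining $D_n'$ and is not already accounted for at an earlier level.
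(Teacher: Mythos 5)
Your proposal follows essentially the same route as the paper: reduce via Theorem \ref{SpectrumOrtho} to the level-by-level spectra $\sigma(A_n|_{D_n'})$, decompose $F_n$ into V's, loops and crosses, let interior shapes contribute their free eigenvalues and wall-cut shapes contribute Dirichlet segments of lengths $d_n$ and $d_n+\frac{1}{I_n}$, and obtain multiplicities by the column-position bookkeeping of the shapes relative to $w_n$ (the paper's Proposition \ref{cross and loop boundary} and the cross/loop counting lemmas). The only slight slip is attributing the seventh family entirely to blocks strictly inside the well, whereas in the paper it arises from half-crosses severed along the wall (which happen to yield the same eigenvalues $k^2\pi^2 I_n^2$); this does not change the approach.
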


Theorem \ref{thrm: SWspectrum} is a consequence of Theorem \ref{SpectrumOrtho} and the following lemmas. 

\begin{lemma}
\label{SWSpectrumF_{0}}
For $n=0$, $\sigma(A_{0}|_{D_{0}^{\prime}})= \bigcup_{k=1}^\infty \left\{ 4 k^{2} \pi^{2} \right\}$ with multiplicity one for all $k$. 
\end{lemma}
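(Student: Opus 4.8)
The plan is to compute $\sigma(A_0|_{D_0'})$ directly, since $F_0 = [0,1]$ is just an interval and $D_0' = D_0 = \Phi_0^*\Dom(A_0)$ by definition. The operator $A_0$ acts by $-\frac{d^2}{dx^2} + V_{SW}$, and the infinite potential on $[0,\frac14)\cup(\frac34,1]$ forces every eigenfunction $f \in \Dom(A_0)$ to vanish identically outside the well $[\frac14,\frac34]$. Hence the eigenvalue problem reduces to $-f'' = \lambda f$ on $[\frac14,\frac34]$ with Dirichlet conditions $f(\frac14) = f(\frac34) = 0$ (continuity of $f$ at the walls, with the derivative allowed to jump as permitted by $\Dom(A_n)$). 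This is the textbook particle-in-a-box on an interval of length $\frac12$.

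First I would write the general solution $f(x) = A\sin(\sqrt{\lambda}(x-\tfrac14)) + B\cos(\sqrt{\lambda}(x-\tfrac14))$ on the well; imposing $f(\tfrac14)=0$ gives $B=0$, and imposing $f(\tfrac34)=0$ gives $\sin(\sqrt{\lambda}/2)=0$, so $\sqrt{\lambda}/2 = k\pi$ for $k \in \mathbb{Z}_{>0}$, i.e. $\lambda = 4k^2\pi^2$. (The value $k=0$ gives $f\equiv 0$ and is excluded.) Next I would check that each such $\lambda$ has multiplicity exactly one: the solution space on the well is one-dimensional for each $k$ (spanned by $\sin(2k\pi(x-\tfrac14))$), and extension by zero outside the well is the unique admissible extension in $\Dom(A_0)$, so no extra multiplicity is introduced. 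Finally I would note that these exhaust the spectrum, since $\{\sin(2k\pi(x-\tfrac14))\}_{k\geq 1}$ is a complete orthogonal basis of $L^2([\tfrac14,\tfrac34])$ and functions in $D_0'$ supported in the well are in bijection with $L^2$ of the well under the zero-extension.

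There is essentially no serious obstacle here; the only point requiring a little care is the bookkeeping of the domain condition — verifying that allowing a discontinuous derivative at $x=\frac14$ and $x=\frac34$ does not enlarge the eigenspaces (it does not, because continuity of $f$ plus the vanishing outside the well already pin down the boundary values), and confirming that $F_0$ contributes nothing from outside the well. Since $D_0' = D_0$, there is also no orthogonal-complement subtraction to perform at this level, unlike the $n\geq 1$ cases. So the lemma follows from the elementary Dirichlet eigenvalue computation on $[\frac14,\frac34]$.
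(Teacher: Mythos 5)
Your proposal is correct and follows essentially the same route as the paper: the paper's (much terser) proof likewise observes that the admissible eigenfunctions are the translated sines $\sin(2k\pi(x-\tfrac14))$ supported in the well, yielding the eigenvalues $4k^2\pi^2$ each with multiplicity one. Your added detail on the Dirichlet reduction, the domain bookkeeping at the walls, and the fact that $D_0'=D_0$ is consistent with, and a fleshed-out version of, the paper's argument.
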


\begin{proof}
We look for eigenfunctions of $A_{0}$ on $F_{0}$ in $\Dom(A_{0})$. The only functions that satisfy these restrictions are translations of $\left\{\sin(2 k \pi x ) \right\}~\forall k \in \mathbb{N}$ supported inside the square well. We immediately obtain the eigenvalues, each with multiplicity one. 
\end{proof}

The set of eigenvalues in Lemma \ref{SWSpectrumF_{0}} comprises the first union in Theorem \ref{thrm: SWspectrum}. Now, we tackle the most complicated set, namely 
\begin{equation*}
\bigcup_{n=2}^{\infty} \bigcup_{k=1}^\infty  \left\{ \frac{ k^{2} \pi^{2}}{(d_{n}+\frac{1}{I_{n}})^{2}}, 
 \right\}
 \end{equation*}
 with the understanding that the complete list of eigenvalues comes from an application of similar arguments in conjunction with Theorem \ref{SpectrumOrtho}. 
 
\begin{lemma}
\label{SplitCrosses}
The set of eigenvalues $\bigcup_{n=2}^{\infty} \bigcup_{k=1}^\infty  \left\{ \frac{ k^{2} \pi^{2}}{(d_{n}+\frac{1}{I_{n}})^{2}} \right\}$ has multiplicity  
\begin{equation*}
2^{n-1}~\text{if}~\left( mj_{n}-1< w_{n} < mj_{n} \right)~\text{and}~0~\text{otherwise}~\forall~n\geq 2.
\end{equation*}
\end{lemma}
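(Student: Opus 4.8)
The plan is to combine Theorem~\ref{SpectrumOrtho}, which gives $\sigma(H_{SW})=\bigcup_{n\ge 0}\sigma(A_n|_{D_n^{\prime}})$, with the decomposition of each approximating graph $F_n$ into V's, loops and crosses (Figure~\ref{A V, a loop, and a cross}), and to extract from $\sigma(A_n|_{D_n^{\prime}})$ exactly the contribution of eigenfunctions supported on a cross that a wall of the well cuts. Recall that the crosses of $F_n$ occur in groups sitting over the $I_{n-1}-1$ interior level-$(n-1)$ columns $x=m/I_{n-1}$, $m\in\{1,\dots,I_{n-1}-1\}$, each group filling the $x$-interval $[m/I_{n-1}-1/I_n,\ m/I_{n-1}+1/I_n]$ of width $2/I_n$ and joined to its neighbouring loops by vertical edges of $F_n$; in particular crosses exist only once $n\ge 2$. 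From the free computation in \cite{ST08,BDMS10}, the new-at-level-$n$ eigenfunctions supported on the crosses over $x=m/I_{n-1}$ include, for every $k\ge 1$, a $2^{n-2}$-dimensional family that spans the full width, vanishes at the two bounding level-$n$ columns $x=m/I_{n-1}\pm 1/I_n$, and has eigenvalue $(k\pi/(2/I_n))^2=k^2\pi^2 I_n^2/4$; this is the family I would follow.

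First I would see how the well deforms this family. Because $V_{SW}=\infty$ off $[\tfrac14,\tfrac34]$, every eigenfunction of $A_n$ vanishes on the part of $F_n$ outside the well and may jump in derivative at the walls $x=\tfrac14,\tfrac34$; a full-width cross eigenfunction is therefore unchanged when its group lies entirely inside $[\tfrac14,\tfrac34]$ (it then feeds the last family $k^2\pi^2 I_n^2/4$ of Theorem~\ref{thrm: SWspectrum}), is identically zero when the group lies entirely outside, and is altered only when a wall enters the group. Since truncation only shortens the support, the value $k^2\pi^2/(d_n+1/I_n)^2$, whose length scale $d_n+1/I_n$ lies strictly between $1/I_n$ and $2/I_n$, can come only from a full-width family cut down from length $2/I_n$ to length $d_n+1/I_n$, i.e.\ from a group whose outward half --- the half pointing toward $x=0$ or $x=1$ --- is straddled by a wall. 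In column coordinates the wall $x=\tfrac14$ sits at the abscissa $w_n=I_n/4$, and it straddles the outward half of the group at $x=m/I_{n-1}$ precisely when $w_n$ lies strictly inside the last level-$n$ sub-cell of the $m$-th level-$(n-1)$ cell, i.e.\ when $mj_n-1<w_n<mj_n$; if no such $m$ exists the multiplicity is $0$. Granting that inequality, the nearest column of $F_n$ inside the well is the centre column $x=m/I_{n-1}$, so $0<d_n=m/I_{n-1}-\tfrac14<1/I_n$, and a full-width eigenfunction, forced to vanish on the piece of the outward half lying outside the well, becomes a function supported on the path from the wall $x=\tfrac14$ through $x=m/I_{n-1}$ to the far end $x=m/I_{n-1}+1/I_n$ with Dirichlet data at both ends; this path has length $(m/I_{n-1}+1/I_n)-\tfrac14=d_n+1/I_n$, so $-f''=\lambda f$ on it forces $\lambda=k^2\pi^2/(d_n+1/I_n)^2$, $k\ge 1$, the asserted set.

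The multiplicity is the crux. The well has two walls, and by the symmetry of $L$ about $x=\tfrac12$ the inequality $mj_n-1<w_n<mj_n$ is equivalent to the wall $x=\tfrac34$ straddling, symmetrically, the outward half of a second cross group, distinct from the first because $n\ge 2$ forces $I_n\ge 4$, again at distance $d_n$. Each wall converts the $2^{n-2}$-dimensional full-width family at its cut group into a family of eigenvalue $k^2\pi^2/(d_n+1/I_n)^2$, and since the two are supported on disjoint groups they are independent, for a combined multiplicity $2^{n-2}+2^{n-2}=2^{n-1}$; as no loop, no V, and no uncut cross reaches this eigenvalue, the multiplicity is $2^{n-1}$ when $mj_n-1<w_n<mj_n$ and $0$ otherwise. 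I expect the main obstacle to be exactly this step --- pushing the orthogonalisation of \cite{ST08,BDMS10} through with the potential switched on: one must verify that the truncated functions stay orthogonal to $D_{n-1}^{\prime}$ (so they genuinely lie in $D_n^{\prime}$ and are counted once), that exactly $2^{n-2}$ per cut group survive that constraint, that none of the sibling families arising from the same cut groups (notably the $k^2\pi^2/d_n^2$ family and the residual $k^2\pi^2 I_n^2/4$ family) accidentally hits $k^2\pi^2/(d_n+1/I_n)^2$, and that the count dovetails with the $(m-1)2^{n-1}$ corrections appearing in the multiplicities of the remaining sets of Theorem~\ref{thrm: SWspectrum}.
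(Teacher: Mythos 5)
Your proposal is correct and follows essentially the same route as the paper: identify the eigenvalues with crosses straddling a wall of the well whose centers lie inside, construct the sine eigenfunctions on the truncated length $d_n+\tfrac{1}{I_n}$ with Dirichlet data at the wall and the interior corner, and obtain the multiplicity as one per split cross, with $2^{n-2}$ rows of crosses times the two walls giving $2^{n-1}$. The paper simply delegates that final count to Lemma~\ref{CrossLemma} and the orthogonality bookkeeping to Theorem~\ref{SpectrumOrtho}, exactly the two verification points you flagged.
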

\begin{proof}
In keeping with \cite{RS09} and \cite{ST08}, we can decompose each quantum graph approximation $F_{n}$ into V's, loops, and crosses, and then deduce from the orthogonality conditions required by Theorem \ref{SpectrumOrtho} that each shape found within the square well contributes once to the overall multiplicity. The eigenvalues in the lemma come from crosses that straddle the boundary and whose centers lie in the square well. In this case, the eigenfunctions in $\Dom(A_{n})$ must take opposite values along the two X's that comprise the cross so that the eigenfunction is determined by the value it takes on the upper X. Therefore, one solution is to construct eigenfunctions that assume the same value on the upper and lower parts of the X and which vanish outside the square well and vanish on the corners inside the square well, namely 
\begin{equation*}
\sin\left( \frac{ k \pi } {d_{n} + \frac{1}{I_{n}}} (x-x_{0}) \right)~\text{for $x$ on the cross}~\text{and}~0~\text{otherwise}.
\end{equation*}
From this, we read off the associated eigenvalues 
\begin{equation*}
\left\{ \frac{ k^{2} \pi^{2} }{\left( d_{n}+\frac{1}{I_{n}} \right)^{2}} \right\}~\forall~k\in \mathbb{N}.
\end{equation*}
Lastly, it is shown in Lemma \ref{CrossLemma} that the number of such split crosses in $F_{n}$ is 
\begin{equation*}
2^{n-1}~\text{if}~\left( mj_{n}-1< w_{n} < mj_{n} \right)~\text{and}~0~\text{otherwise}. 
\end{equation*}
Since each split cross contributes one eigenvalue, we have the claimed multiplicity. 
\end{proof}

The remaining portion of this section gives the counts and placements for the shapes which comprise $F_{n}$. Combining these results with arguments similar to one in Lemma \ref{SplitCrosses} gives the eigenvalues and multiplicities in Theorem \ref{thrm: SWspectrum}.  We can derive from Lemmas 3.1, 3.2, and 3.3 in \cite{BDMS10} that V's occupy a total of two columns, loops occupy a total of $I_{n-1}(j_{n}-2)$ columns, and crosses occupy a total of $2(I_{n-1}-1)$ columns.  Thus, since there are $I_{n}$ columns in the $F_{n}$ quantum graph approximation, 
\begin{equation}
I_{n}=2+I_{n-1}(j_{n}-2)+2(I_{n-1}-1).\label{I_n short}
\end{equation}
Moreover, proposition 3.1 in \cite{BDMS10} implies that a V occupies the first column in $F_{n}$, loops occupy the next $j_{n}-2$ columns, and a cross occupies the next two columns.  Loops and crosses continue to alternate across $F_{n}$---loops arising in clusters of $j_{n}-2$ and crosses covering two columns each.  The last $j_{n}-1$ columns are occupied by loops and a V, respectively.  Thus, we can expand equation \ref{I_n short} into

\begin{equation}
I_{n}=1+(j_{n}-2)+2+(j_{n}-2)+2+\cdots+2+(j_{n}-2)+1.
\end{equation}
\vspace{1mm}

\begin{proposition}
Let m be an integer.
\begin{enumerate}
  \item[(a)] The column boundary of the V's is denoted by $[0, 1]$ and $[I_{n}-1, I_{n}]$.\\
  \item[(b)] The column boundary of the $m^{th}$ set of loops in any row is denoted by $[(m-1)j_{n}+1$,\hspace{1.5mm} $mj_{n}-1]$, where $1 \leq m \leq I_{n-1}$.\\
  \item[(c)] The column boundary of the $m^{th}$ cross in any row is denoted by $[mj_{n}-1$,\hspace{1.5mm} $mj_{n}+1]$, where $1 \leq m \leq I_{n-1}-1$
\end{enumerate}
\label{cross and loop boundary}
\end{proposition}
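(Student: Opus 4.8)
The plan is to read off (a)--(c) directly from the sequential left-to-right layout of $F_n$ recalled immediately above the statement: starting from $x=0$ one meets a V occupying one column, then a cluster of $j_n-2$ loop-columns, then a cross occupying two columns, with loop-clusters (each $j_n-2$ columns wide) and crosses (each $2$ columns wide) strictly alternating, the pattern terminating in a cluster of $j_n-2$ loop-columns followed by a single V-column. This structural fact, together with the expansion of equation~\ref{I_n short}, is quoted from Proposition~3.1 of \cite{BDMS10}; since the nodes of $F_n$ are aligned in columns, this layout is the same in every row, so the claims are row-independent and it suffices to track the running column index block by block. I would therefore prove (b) and (c) simultaneously by induction on $m$, and then obtain (a) from the endpoints.

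Base case $m=1$: the left V occupies $[0,1]$, so the first loop-cluster occupies the next $j_n-2$ columns, that is $[1,\,j_n-1]=[(1-1)j_n+1,\ 1\cdot j_n-1]$, which is (b) for $m=1$; the first cross then occupies the following two columns $[j_n-1,\ j_n+1]=[1\cdot j_n-1,\ 1\cdot j_n+1]$, which is (c) for $m=1$. Inductive step: assuming the $m$-th cross occupies $[mj_n-1,\ mj_n+1]$ with $m\le I_{n-1}-1$, the next block is the $(m+1)$-th loop-cluster, occupying the next $j_n-2$ columns $[mj_n+1,\ mj_n+1+(j_n-2)]=[((m+1)-1)j_n+1,\ (m+1)j_n-1]$, which is (b) for $m+1$; and if moreover $m+1\le I_{n-1}-1$, the $(m+1)$-th cross occupies the two columns $[(m+1)j_n-1,\ (m+1)j_n+1]$, which is (c) for $m+1$. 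This closes the induction and establishes (b) for $1\le m\le I_{n-1}$ and (c) for $1\le m\le I_{n-1}-1$.

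For (a): by (b) with $m=I_{n-1}$ the last loop-cluster occupies $[(I_{n-1}-1)j_n+1,\ I_{n-1}j_n-1]$, and since $I_n=I_{n-1}j_n$ this equals $[I_n-j_n+1,\ I_n-1]$; the one remaining column, occupied by the right V, is then $[I_n-1,\ I_n]$, while the left V is $[0,1]$ by construction. As a consistency check, the block lengths sum to $2+I_{n-1}(j_n-2)+2(I_{n-1}-1)=I_{n-1}j_n=I_n$, matching equation~\ref{I_n short}, so the intervals in (a)--(c) exactly partition $[0,I_n]$.

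This is bookkeeping, so there is no serious obstacle; the only points needing a word of care are the endpoints, where the first and last loop-clusters abut V's rather than crosses (handled by the base case and by the computation for (a)), and the degenerate case $j_n=2$, in which each loop-cluster spans zero columns and the intervals in (b) collapse to the single points $[2m-1,\,2m-1]$ wedged between consecutive crosses. The genuinely combinatorial input---that $F_n$ decomposes into V's, loops, and crosses with exactly these counts and in exactly this order---is not reproved here but taken from \cite{BDMS10}.
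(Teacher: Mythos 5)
Your proof is correct and follows essentially the same route as the paper's: both arguments read off the alternating V--loops--cross column layout from \cite{BDMS10} and track the running column index by induction on $m$ (the paper phrases it as a downward induction and treats (b) and (c) separately, while you run the induction upward and simultaneously, which is only a cosmetic difference). Your explicit derivation of the right V's interval $[I_n-1,I_n]$ and your remark on the degenerate case $j_n=2$ are welcome refinements but do not change the substance.
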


\begin{proof}
\begin{enumerate}
  \item[(a)]  Since V's are found at the edges of a Laakso space, they occupy the first and last columns.\\
  \item[(b)]  We note that the left-most set of loops in $F_{n}$ has a column boundary of $[1$, $j_{n}-1]$.  Suppose that the $m^{th}$ set of loops occupies the column $[(m-1)j_{n}+1$, $mj_{n}-1]$.  We are going to show that the $(m-1)^{th}$ set of loops must occupy the column $[(m-2)j_{n}+1$, $(m-1)j_{n}-1]$. First, we subtract two from $(m-1)j_{n}+1$ to get the upper bound of the $(m-1)^{th}$ loop.  Subtracting an additional $j_{n}-2$ gives us the lower bound.  Thus, by induction, we have shown that the $m^{th}$ loop occupies the column $[(m-1)j_{n}+1$, $mj_{n}-1]$.\\
   \item[(c)]  We note that the left most cross in $F_{n}$ has a column boundary of $[j_{n}-1$, $j_{n}+1]$.  Let's assume that the $m^{th}$ cross occupies the column $[mj_{n}-1$, $mj_{n}+1]$.  We are going to show that the $(m-1)^{th}$ cross must occupy the column $[(m-1)j_{n}-1$, $(m-1)j_{n}+1]$.  First, we subtract $j_{n}-2$ from $mj_{n}-1$ to get the upper bound of the $(m-1)^{th}$ cross.  Subtracting an additional two columns gives us the lower bound.  Thus, by induction, we have shown that the $m^{th}$ cross occupies the column $[mj_{n}-1$, $mj_{n}+1]$.
\end{enumerate}
\end{proof}

\begin{lemma}
\label{CrossLemma}
Let $w_{n},m > 0$.
\begin{enumerate}

  \item[(a)]  When $(m-1)j_{n} < w_{n} \leq mj_{n}-1$, then there are

  \begin{equation}
  2^{n-2}(I_{n-1}-1)-(m-1)2^{n-1}
  \label{crosses in SW1}
  \end{equation}\

  \noindent full crosses on the interior of the square well.

  \item[(b)]  When $mj_{n}-1 < w_{n} \leq mj_{n}$, there are

  \begin{equation}
  2^{n-2}(I_{n-1}-1)-(m-1)2^{n-1}
  \label{crosses in SW2}
  \end{equation}\

  \noindent full crosses on the interior of the square well and $2^{n-1}$ half crosses.

\end{enumerate}
\end{lemma}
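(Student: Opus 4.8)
The plan is to reduce everything to the explicit column addresses of Proposition \ref{cross and loop boundary} together with the reflection symmetry of $L$. Recall from the discussion preceding that proposition that, in a fixed row of $F_n$, the columns are occupied from left to right by a V (column $[0,1]$), then alternating clusters of $j_n-2$ loops and single crosses, with the $l$-th cross occupying the two columns $[lj_n-1,\,lj_n+1]$ for $l=1,\dots,I_{n-1}-1$, and finally a loop cluster and a V. By the counting lemmas of \cite{BDMS10}, $F_n$ contains $2^{n-2}(I_{n-1}-1)$ crosses in all for $n\ge 2$, arranged as $2^{n-2}$ rows that agree in the column coordinate, so it suffices to count full and half crosses in one row and multiply by $2^{n-2}$. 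I would also use that the well $[\tfrac14,\tfrac34]$ is invariant under $x\mapsto 1-x$, which acts on columns by $c\mapsto I_n-c$ and hence carries cross $l$ to cross $I_{n-1}-l$: in each row the crosses lying entirely inside the well then form a block $\{l_0,l_0+1,\dots,I_{n-1}-l_0\}$ symmetric about the centre, and a cross meets the left wall in its interior iff its mirror meets the right wall. Everything then comes down to locating, in terms of $w_n=I_n/4$, the first cross $l_0$ that is entirely inside the well, and to deciding whether a cross is split by a wall with its centre still inside.

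For part (a), under $(m-1)j_n<w_n\le mj_n-1$ I would check that the left wall lands either in the interior of cross $m-1$ --- whose centre $(m-1)j_n<w_n$ lies outside the well --- or in the $m$-th loop cluster $[(m-1)j_n+1,\,mj_n-1]$ or on one of its endpoints (these ranges exhaust $\bigl((m-1)j_n,\,mj_n-1\bigr]$ once one uses $j_n\ge 2$). In every sub-case the first cross with both columns $\ge w_n$ is cross $m$, and no cross is split by a wall while keeping its centre inside; hence $l_0=m$, the per-row number of full crosses is $(I_{n-1}-m)-m+1=I_{n-1}-2m+1$, and there are no half crosses. Multiplying by $2^{n-2}$ gives $2^{n-2}(I_{n-1}-1)-(m-1)2^{n-1}$, which is \eqref{crosses in SW1}.

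For part (b), under $mj_n-1<w_n\le mj_n$ the left wall lies strictly between the left endpoint and the centre of cross $m$, so cross $m$ is split by the left wall with its centre $mj_n\ge w_n$ inside the well; by symmetry cross $I_{n-1}-m$ is split by the right wall, producing $2\cdot 2^{n-2}=2^{n-1}$ half crosses (agreeing with Lemma \ref{SplitCrosses}). The crosses entirely inside the well in one row are exactly those strictly between these two split crosses, so reading off their number from Proposition \ref{cross and loop boundary} and multiplying by $2^{n-2}$ yields \eqref{crosses in SW2}.

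The step I expect to be delicate is exactly this endpoint bookkeeping. Because $w_n=I_n/4$ need not be an integer, the left wall can fall strictly inside a column, and the closed forms in \eqref{crosses in SW1} and \eqref{crosses in SW2} are sensitive to which half-open range contains $w_n$; one must carefully separate $w_n=mj_n-1$ (part (a): the wall sits on the left edge of cross $m$, which is then entirely inside) from $mj_n-1<w_n$ (part (b): the wall is interior to cross $m$), and $w_n=mj_n$ (still part (b): the wall is at the centre of cross $m$) from the strict condition $w_n<mj_n$ in Lemma \ref{SplitCrosses}. A cross may also poke into the well with its centre outside, in which case it contributes to the $k^2\pi^2/d_n^2$ eigenvalues rather than being a half cross, and this has to be excluded from both counts. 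Pinning all of this down is what forces the per-row counts, hence the correction terms, to come out exactly as stated; the rest is the decomposition $I_n=j_n I_{n-1}$ into a V, the crosses, and the loops, together with the column addresses of Proposition \ref{cross and loop boundary}.
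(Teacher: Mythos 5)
Your method is essentially the paper's: fix one of the $2^{n-2}$ rows of crosses, locate each cross relative to the walls of the well via the column addresses of Proposition \ref{cross and loop boundary}, use the reflection symmetry $c\mapsto I_n-c$ to treat the right wall, and multiply the per-row count by the number of rows. Your bookkeeping of the endpoint cases (wall on the left edge of cross $m$, wall at its centre, cross poking into the well with centre outside) is if anything more careful than the paper's, and part (a) is correct: the full crosses in a row are $m,\dots,I_{n-1}-m$, giving $I_{n-1}-2m+1$ per row and $2^{n-2}(I_{n-1}-1)-(m-1)2^{n-1}$ in total, which is \eqref{crosses in SW1}; the observation that a cross cut with its centre outside is never a half cross matches the paper's remark.

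The defect is the final step of part (b). The crosses lying strictly between the two split crosses $m$ and $I_{n-1}-m$ are $m+1,\dots,I_{n-1}-m-1$, i.e.\ $I_{n-1}-2m-1$ per row, hence $2^{n-2}(I_{n-1}-2m-1)=2^{n-2}(I_{n-1}-1)-m2^{n-1}$ full crosses in total. That is \emph{not} the displayed formula \eqref{crosses in SW2}, which reads $2^{n-2}(I_{n-1}-1)-(m-1)2^{n-1}$, yet you assert without carrying out the arithmetic that your count yields \eqref{crosses in SW2}. Your count is in fact the correct one: the paper's own proof of part (b) arrives at $2^{n-2}(I_{n-1}-1)-m2^{n-1}$ intact crosses, item 10 of Theorem \ref{thrm: SWspectrum} uses the same value on the range $mj_n-1<w_n\le mj_n+1$, and a direct check confirms it (take $j_n\equiv 4$, $n=2$, so $w_2=4$, $m=1$: the crosses occupy columns $[3,5]$, $[7,9]$, $[11,13]$ and the well is $[4,12]$, leaving exactly one full cross, which is $3-2$, not $3$). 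So the $(m-1)$ in the statement of part (b) is evidently a typographical slip, and as written your argument claims to establish a formula that it actually contradicts. To complete the proof you must finish the count and state explicitly that part (b) holds with $m2^{n-1}$ in place of $(m-1)2^{n-1}$ (equivalently, flag the misprint), rather than gesture at the labelled equation; the half-cross count $2\cdot 2^{n-2}=2^{n-1}$ is fine as you have it.
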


\begin{proof}
\begin{enumerate}
  \item[(a)]  When $w_{n} \in ((m-1)j_{n}, mj_{n}-1]$, then the $(m-1)^{th}$ cross in any given row either straddles the wall of the square well or is outside the square well.  There are $2^{n-2}$ rows of crosses in $F_{n}$, so we multiply this number by $2(m-1)$, which gives us the total number of crosses that are not in the square well.  Finally, we subtract the resulting number from the total number of crosses in the graph, giving us the formula in \ref{crosses in SW1}.  Note that if $x=\frac{1}{4}$ intersects the $(m-1)^{th}$ cross, then more than half of the cross will remain on the exterior of the square well.  Thus, the square well will never contain a half cross.\\
  \item[(b)]  When $w_{n} \in (mj_{n}-1, mj_{n}]$, then $x=\frac{1}{4}$ intersects the left half of the $m^{th}$ cross in any given row. Although the entire cross is not in the square well, the right half- cross is.  So, there are $2^{n-2}(I_{n-1}-1)-m2^{n-1}$ intact crosses and $2^{n-1}$ half-crosses.
\end{enumerate}
\end{proof}

\begin{lemma}
Let $w_{n},m > 0$.
\begin{enumerate}

  \item[(a)]  When $(m-1)j_{n} < w_{n} \leq mj_{n}-1$, then there are

  \begin{equation}
  2^{n-1}(j_{n}-2)(I_{n-1})-2^{n}(1+\lceil w_{n} \rceil -2m)
  \label{loops in SW1}
  \end{equation}\

  \noindent loops on the interior of the square well.

  \item[(b)]  When $mj_{n}-1 < w_{n} \leq mj_{n}$, there are

  \begin{equation}
  2^{n-1}(j_{n}-2)(I_{n-1})-m2^{n}(j_{n}-2)
  \label{loops in SW2}
  \end{equation}\

  \noindent loops on the interior of the square well.

\end{enumerate}
\end{lemma}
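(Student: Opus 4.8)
The plan is to obtain the interior loop count by subtracting, from the total number of loops in $F_n$, those loops that lie entirely outside the square well or straddle one of its two walls. By the multiplicity data recalled in \eqref{eq: Lspectrum}, equivalently by the shape counts imported from \cite{BDMS10}, $F_n$ has $2^{n-1}(j_n-2)I_{n-1}$ loops altogether; since a single row contains $(j_n-2)I_{n-1}$ loop columns ($I_{n-1}$ clusters of $j_n-2$ each, by Proposition \ref{cross and loop boundary}(b)), these loops are distributed among $2^{n-1}$ rows. Because $F_n$ is horizontally symmetric and the well $\bigl[\tfrac14,\tfrac34\bigr]$ is a symmetric sub-interval, it suffices to count the loops sitting in columns strictly to the left of $x=\tfrac14$ and then double. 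Identifying the column index of $x=\tfrac14$ with $w_n=\tfrac14 I_n$, a loop occupying the unit column $[c,c+1]$, $c$ an integer, lies in the interior exactly when $c\ge w_n$, and is non-interior (entirely outside or straddling the wall) exactly when $c\le\lceil w_n\rceil-1$. The use of $\lceil w_n\rceil$ rather than $w_n$ here is forced by the placement of $x=\tfrac14$ between two node-columns when $I_n$ is not divisible by $4$.

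For part (a): the hypothesis $(m-1)j_n<w_n\le mj_n-1$ puts the column $w_n$ inside the $m$-th loop cluster, whose columns run from $(m-1)j_n+1$ to $mj_n-1$. In each row the clusters $1,\dots,m-1$ lie wholly to the left of the wall (they end at column $(m-1)j_n-1<w_n$), contributing $(m-1)(j_n-2)$ non-interior loops, while within cluster $m$ the loops with left node at a column in $\{(m-1)j_n+1,\dots,\lceil w_n\rceil-1\}$ are non-interior, contributing a further $\lceil w_n\rceil-1-(m-1)j_n$ (this count being valid, and equal to $0$, even when $\lceil w_n\rceil=(m-1)j_n+1$). Adding and simplifying gives $1+\lceil w_n\rceil-2m$ non-interior loops per row on the left; multiplying by $2$ for the right wall and by $2^{n-1}$ for the rows gives $2^n(1+\lceil w_n\rceil-2m)$, and subtracting this from $2^{n-1}(j_n-2)I_{n-1}$ yields \eqref{loops in SW1}.

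For part (b): the hypothesis $mj_n-1<w_n\le mj_n$ puts $x=\tfrac14$ in the left half of the $m$-th cross rather than in any loop cluster, so in each row the clusters $1,\dots,m$ are all to the left of the wall and no loop straddles it, giving $m(j_n-2)$ non-interior loops per row on the left. Hence there are $2\cdot 2^{n-1}m(j_n-2)=m\,2^n(j_n-2)$ non-interior loops in all, and subtracting from the total gives \eqref{loops in SW2}. As a consistency check, at the shared endpoint $w_n=mj_n-1$ both expressions return $m(j_n-2)$ non-interior loops per side, so the count passes smoothly from a loop cluster to a cross.

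The step I expect to be the real obstacle is the bookkeeping at the wall when $w_n$ is not an integer: one must fix, unambiguously from the description of $\Dom(A_n)$, which loops at the boundary count as interior, account for the lone straddling loop, and then check that the sum collapses to $1+\lceil w_n\rceil-2m$ in case (a). With the column-boundary statement of Proposition \ref{cross and loop boundary} and the shape counts of \cite{BDMS10} in hand, everything else is elementary arithmetic, carried out exactly as in the proof of Lemma \ref{CrossLemma}.
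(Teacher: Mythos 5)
Your proposal is correct and follows essentially the same route as the paper: count, row by row, the loops lying outside or straddling each wall using the cluster boundaries of Proposition \ref{cross and loop boundary}, double by the horizontal symmetry, multiply by the $2^{n-1}$ rows, and subtract from the total loop count $2^{n-1}(j_n-2)I_{n-1}$, with the same $\lceil w_n\rceil$ bookkeeping at a wall interior to a loop cluster. The only (inessential) deviation is in part (b), where the paper observes that the exterior count is unchanged from the case $w_n=mj_n-1$ and invokes part (a), while you count directly that the wall sits in the left half of the $m$-th cross so exactly $m(j_n-2)$ loops per row per side are non-interior; both give the same result.
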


\begin{proof}
\begin{enumerate}
  \item[(a)]  Consider when $(m-1)j_{n} < w_{n} \leq mj_{n}-1$.  In any given row, we note that $mj_{n}-1-\lceil w_{n} \rceil$ gives us the number of loops in the $m^{th}$ set of loops that falls to the right of $x=\frac{1}{4}$.  We subtract this number from the total number of loops in that cluster, giving us $j_{n}-2-(mj_{n}-1-\lceil w_{n} \rceil)$.   There are $(j_{n}-2)(m-1)$ remaining loops to the left of $x=\frac{1}{4}$, so we add these two terms together and multiply the sum by $2^{n-1}$ giving us $2^{n-1}(1+\lceil w \rceil-2m)$.  Since we have only accounted for the total number of loops to the left or on $x=\frac{1}{4}$, we must multiply the previous value by two, giving us $2^{n}(1+\lceil w_{n} \rceil-2m)$.  Finally, we subtract $2^{n}(1+\lceil w \rceil-2m)$ from the total number of loops in the graph, giving us \ref{loops in SW1}.\\
  \item[(b)] When $mj_{n}-1 < w_{n} \leq mj_{n}$, then the number of loops on the exterior of the infinite square well are the same as the number of loops on the exterior when $w_{n}=mj_{n}-1$.  Part a implies that when $w_{n}=mj_{n}-1$, then the number of loops on the interior of the square well is $2^{n-1}(j_{n}-2)(I_{n-1})-m2^{n}(j_{n}-2)$.

\end{enumerate}
\end{proof}

\subsection{Coulomb and Parabolic Potentials} \label{subsect: CP}
The next two potentials we wish to discuss are the Coloumb Potential and the Parabolic Potential.  The Hamiltonian with Coloumb potential is $H_{C}=\Delta$ + $V(x)$, where $V(x)$ is given by

\begin{equation}
V(x) = \frac{-1}{(x-\frac{1}{2})^2} + \frac{1}{4}.
\end{equation}

\noindent In Section \ref{sect:NM}, we provide the bottom of $H_{C}$'s spectrum (Table \ref{table: CP}) and a graph of the eigenfunction corresponding to the smallest eigenvalue (Figure \ref{fig: CP}). As expected, the eigenfunction is zero at $x = \frac{1}{2}$. 

The Hamiltonian with parabolic potential is $H_{P}=\Delta$ + $V(x)$, with $V(x)$ defined as

\begin{equation}
V(x) = \frac{1}{x(1-x)}.
\end{equation}

\noindent In Section \ref{sect:NM}, we provide the bottom of $H_{P}$'s spectrum (Table \ref{table: CP}) and a graph of the eigenfunction corresponding to the smallest eigenvalue (Figure \ref{fig: CP}). As expected,  the eigenfunction is zero at $x = 0$ and $x=1$.

In general there is no method for calculating the closed-form solutions to a linear second-order differential equation. However, by assuming a Taylor expansion for such a solution in the presence of a locally linear potential one can see that the eigenvalues to depend not only on the constant term but also on the first order term in the potential.

\subsection{Numerical Methods} \label{sect:NM}
We modified the MatLab script used in \cite{RS09} and \cite{ST08} to calculate the eigenvalues of the Hamiltonian with certain potentials of classical interest. In all three cases, $V(x)$ was represented by a diagonal matrix with large finite cut-offs to approximate infinity. 

\begin{table}[t]
\small
\begin{center}
\begin{tabular} {| c |  c  |  c  | c  |  c |}
\hline
\multicolumn{5}{|c|}{$H_{SW}$ Spectrum} \\
\hline
\multicolumn{2}{|c|}{Bound: $10^{15}$} & \multicolumn{2}{|c|}{Bound: $10^{15}$} & Expected\\
\hline
\multicolumn{2}{|c|}{$n = 6$} & \multicolumn{2}{|c|}{$n = 8$}  & \\
\hline
$\lambda$ & m & $\lambda$ & m & \\
\hline

38.1& 1 & 39 &1 & $(2\pi)^2$ = 39.48\\
\hline
88.8 & 1 & 89 & 1 & $(3\pi)^2$ = 88.83\\
\hline
152.2 & 3 &  157 & 3& $(4\pi)^2$ = 157.91 \\
\hline
342.3 & 1 & & &  \\
\hline
355.1 & 9 & 353 & 10& $(6\pi)^2$ = 355.31\\
\hline
608.2 & 3 & 628 & 3& $(8\pi)^2$ = 631.65 \\
\hline
798.31 & 1 & 799 & 1& $(9\pi)^2$ = 799.44\\
\hline
 949.8 & 1 & 981 & 1& $(10\pi)^2$ = 986.96\\
\hline
1272.2 & 4 & && \\
\hline
1366.7 & 3&1395.4& 4 &\\
\hline
1417.6 & 21&1412&24& $(12\pi)^2$ = 1421.22 \\
\hline
1858.8 & 1 &1922&1& $(14\pi)^2$ = 1934.44\\
\hline
2211.9 & 1& 2220&1&$(15\pi)^2$ = 2220.7 \\
\hline
2425.0 & 3 &2511&3& $(16\pi)^2$ = 2526.62\\
\hline
3065.6 & 1&3178&1&\\
\hline
3179.5 & 29&3197&29&$(18\pi)^2 = 3197.8$\\
\hline
3779.8 &	3 &3923&3& $(20\pi)^2$ = 3947.84\\
\hline
4318.8 &1&4253&1 & $(21\pi)^2 = 4352.50$\\
\hline
4567 & 1 &4746&1& $(22\pi)^2$ = 4776.89  \\
\hline
5055.9 & 4 & 5580&4&$(24\pi)^2 = 5684.89$\\
\hline

\end{tabular}

\end{center}
\caption{The first 20 eigenvalues computed for $H_{SW}$ and $j_{n} = [2,3,2,3...]$.}
\label{table: SW}
\end{table}

In Table \ref{table: SW}, we compare the MatLab calculations for $H_{SW}$ with eigenvalues found in Theorem \ref{thrm: SWspectrum}.  These columns match for most values, but MatLab gives additional quantities as well. However, as the finite cut-off grows, these extraneous eigenvalues are lost, allowing for the MatLab calculations to coincide with the predicted spectrum. 

\begin{table}[t]
\small
\begin{center}
\begin{tabular} {| c |  c  |  c  | c | }

\hline
\multicolumn{2}{|c|}{$H_{C}$ Spectrum} & \multicolumn{2}{|c|}{$H_{P}$ Spectrum}\\
\hline
\multicolumn{2}{|c|}{Bound: $-10^{15}$}  & \multicolumn{2}{|c|}{Bound: $10^{15}$}\\
\hline
$\lambda$ & m & $\lambda$ & m \\
\hline

-1391.7&	16 &14.7& 1\\
\hline
-.6 &	4  & 45.7&3\\
\hline
64.9& 4 &92.9&1\\
\hline
87.8 & 4&95.8&1\\
\hline
 227.5  & 4&165.4& 3\\
\hline
318.8 & 4&254.7&1\\
\hline
328.4 & 4&359.1&2\\
\hline
342.3 & 4&359.2&3\\
\hline
 349.8 & 4&359.6&4\\
\hline
354.6& 8&360.5&4\\
\hline
478.9 & 4&362.5&4\\
\hline
 796.4 & 4&363.5&3\\
\hline
816.5 & 4&370.7&4\\
\hline
1238.4& 4&491.9&1\\
\hline
1354.6 & 4& 639.9&3\\
\hline
1376.5 & 4&802.4&1 \\
\hline
1398.2 &	4 &807.5&1 \\
\hline
1404.0 & 4 &994.6&3 \\
\hline
1409.8 & 4 &1201.3&1\\
\hline
1412.1 & 4 &1421.7&6 \\
\hline

\end{tabular}

\end{center} 
\caption{The first 20 eigenvalues for $H_{C}$ and $H_{P}$ and $j_{n} =  [2,3,2,3,2,3]$}
\label{table: CP}
\end{table} 

In Table \ref{table: CP}, we give the 20 eigenvalues closest to zero for $H_{C}$ and $H_{P}$. As expected, we have some negative values in the spectrum of $H_{C}$. For $H_{C}$, we approximated $V(x)$ at $x = \frac{1}{2}$ to be $-10^{15}$, and for $H_{P}$ we approximated $V(x)$ at $x = 0$ and $x = 1$ to be $10^{15}$.
    
\begin{figure}[htbp]
\begin{center}
 \includegraphics[scale=.42] {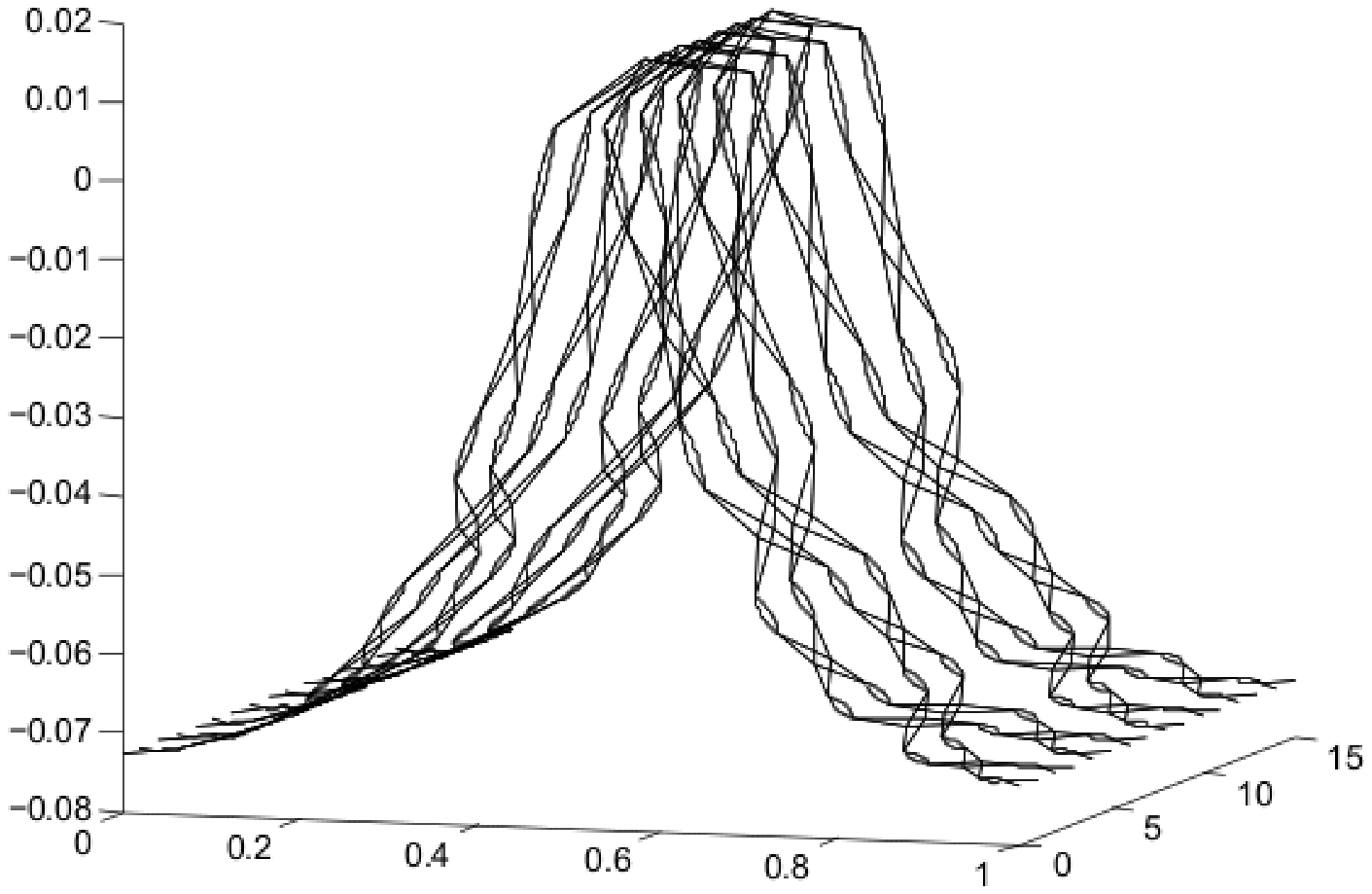} \includegraphics[scale=.42]{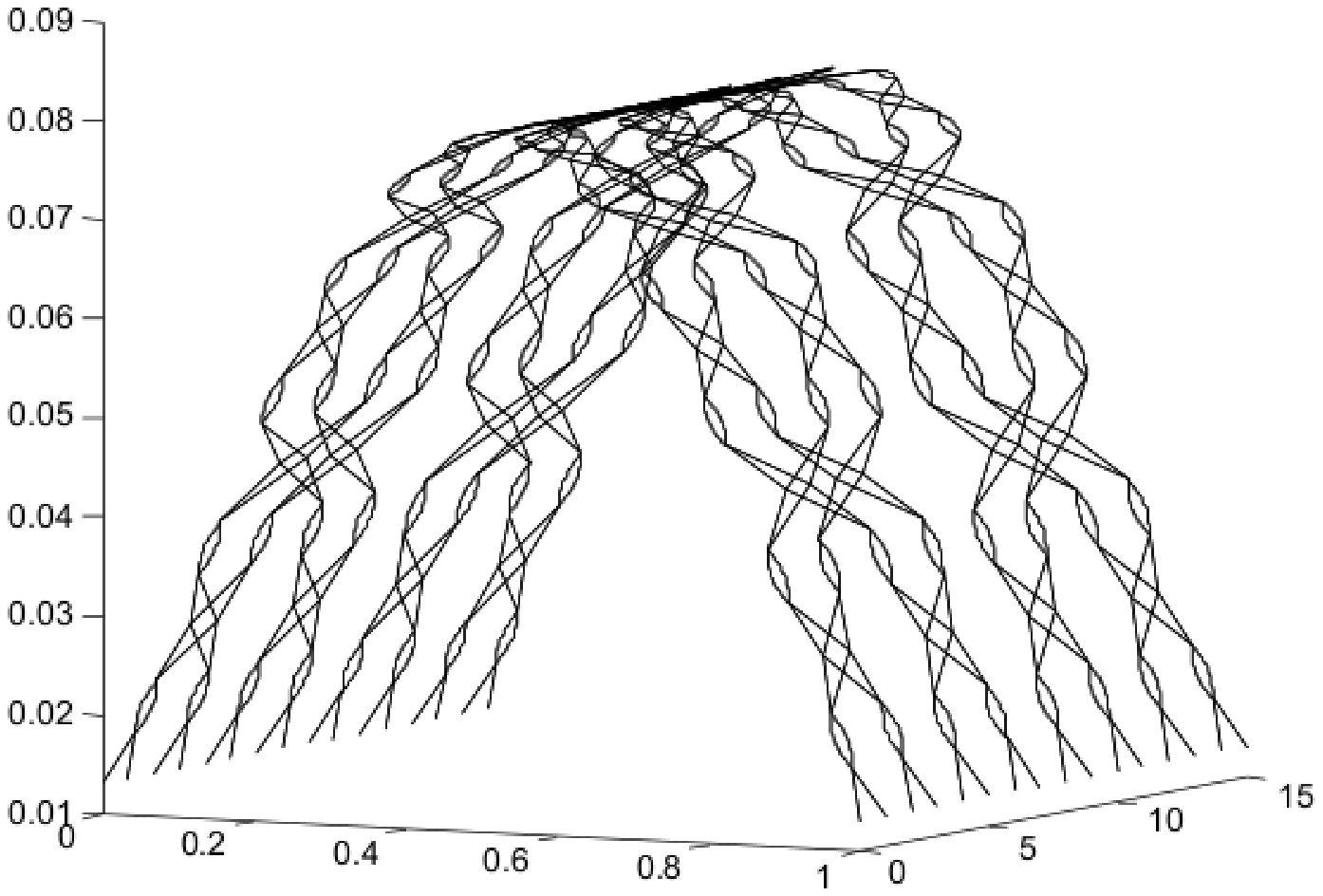} 
 $H_{C} \hspace{6.5cm}$ $H_{P}$ 
 \end{center}
\caption[]{$H_{C}$ and $H_{P}$ with $j_{n}$ = [2,3,2,3]: Eigenfunction 1}
\label{fig: CP}
\end{figure}

In Figure \ref{fig: CP}, we see the first eigenfunction of $H_{C}$ and  $H_{P}$, respectively. Notice the position of zero for these eigenfunctions. For $H_{C}$, the eigenfunction is zero at $x = \frac{1}{2}$, and for $H_{P}$, it is zero at $x = 0$ and $x = 1$. This is exactly where the potentials are infinite.

\begin{figure}[htbp]
\begin{center}
 \includegraphics[scale=.5] {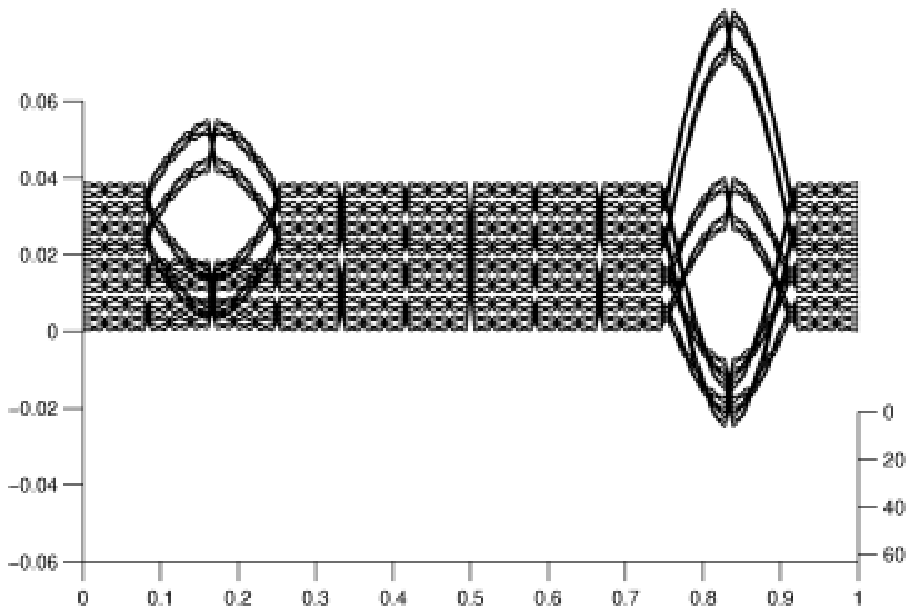} \includegraphics[scale=.5]{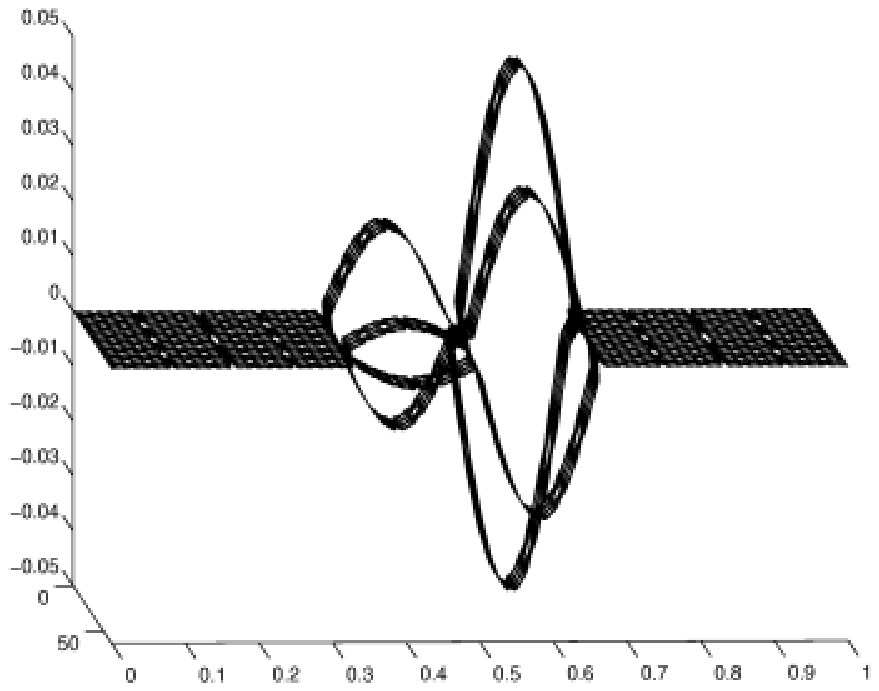} 
 $H_{C} \hspace{6.5cm}$ $H_{P}$ 
 \end{center}
\caption{$H_{C}$ and $H_{P}$ with $j_{n}$ = [2,3,2,3]: Eigenfunction 30 and 15 respectively}
\end{figure}

\section{Electromagnetic Fields and Conducting Plates in Laakso Spaces}\label{sect: Casimir}
One of the simplest and most frequently mentioned cases in Quantum Mechanics is the harmonic oscillator, which we conceptualize as a block attached to a massless spring of some fixed equilibrium length \cite{Griffiths2005}. To describe the mechanics of such a system, we introduce a conserved Hamiltonian
\begin{equation}
 H=E_{Kinetic}~+~E_{Potential}= \frac{1}{2} \left( \frac{p^{2}}{m}+m \omega^{2} q^{2} \right),
\end{equation}
where 
\begin{align*}
m= &\textit{ mass ~of ~block ~attached ~to~ spring,} \\
q=& \textit{ displacement~ of ~block ~from ~equilibrium,} \\
p=& \textit{ momentum ~of~ block,} \\
 \omega=& \textit{constant involving spring constant and mass}.
 \end{align*}

\vspace{2mm}

\noindent We can also consider the Hamiltonian and its components as operators:
\begin{align*}
p[f]= &-i\hbar \frac{d}{dx}f \\ 
q[f]=&qf \\
 H[f]=&\frac{1}{2} \left( \frac{p^{2}}{m}+m\omega^{2} q^{2} \right)[f]=\frac{1}{2} \left(\frac{-\hbar^{2}}{m} \frac{d^{2}}{dx^{2}}f+ m \omega^{2}q^{2}f \right).
\end{align*}

The importance of this configuration becomes apparent in \cite{Bog1980} where the quantized Hamiltonian of the electromagnetic field is shown to be equivalent to the Hamiltonian of a set of independent harmonic oscillators,
\begin{equation}
H_{EM}=\frac{1}{2} \sum_{k,s} \left( \frac{ p_{k,s}^{2}(t) } {m} +m \omega_{k}^{2} q_{k,s}^{2}(t) \right),
\end{equation}
where $p_{k,s}(t)$ and $ q_{k,s} (t)$ are time-dependent functions of the electromagnetic field, $\left\{ \omega_{k}  \right\} = \left\{ \sqrt{ \lambda}~ | ~\lambda \in \sigma{(\Delta)} \right\}$, \emph{k} is the wave vector of the radiation, and \emph{s} is the degree of freedom resulting from the different possible polarizations. Given this equivalence, one should not be surprised to find that, just as an isolated oscillator assumes a discrete set of allowable energies given by 
\begin{equation*}
\left\{  \left.\hbar  \omega \left( n+\frac{1}{2} \right) \right | n \in \mathbb{N} \cup{\left\{ 0 \right\}} \right\},
\end{equation*}
the energies for the quantized electromagnetic field are
 \begin{equation*}
 \left\{ \left. \sum_{j} \hbar \omega_{j} \left(n_{j}+\frac{1}{2} \right) \right | n_{j} \in \mathbb{N}\cup{\left\{0 \right\}}~\forall j \right\}.
 \end{equation*}
Similarly, the respective zero-point energies are
\begin{eqnarray}
E_{0}=&\left.  \hbar  \omega (n+\frac{1}{2}) \right |_{n=0} &=\frac{\hbar \omega}{2} \\
\label{Zero-Point} E_{0}=&\left. \sum_{j} \hbar \omega_{j} (n_{j}+\frac{1}{2}) \right |_{(n_{1},n_{2}, \dots)=0}&=\frac{ \hbar} {2} \sum_{n} \omega_{n},
\end{eqnarray} 
where the sum in Equation \ref{Zero-Point} is over $\left\{ \omega_{n}  \right\} = \left\{ \sqrt{ \lambda}~ | ~\lambda \in \sigma{(\Delta)} \right\}$ \cite{Berkolaiko2009}. An immediate consequence of this equation is that free space has a non-zero minimum energy density. Another perhaps less obvious result is derived in a 1948 paper by H.B.G. Casimir, which claimes that under appropriate boundary conditions, two uncharged conducting plates should experience a mutual force of attraction which varies with the inverse fourth power of their separation distance  \cite{Casimir1948}. Specifically, 
\begin{equation}
\left. \right | F_{C} \left. \right |=\frac{ \pi^{2} c \hbar}{240 a^{4}}.
\end{equation}
In the years following Casimir's original paper, mathematicians have made numerous attempts to verify an attractive force between two uncharged conducting plates. In 1957, M. J. Sparnaay found Casimir's expected value to be within his experimental margin of error in \cite{Sparnaay1957}, while in 2002 a research team at the University of Padua experimentally confirmed this formula in \cite{Bressi2002}. Here, we derive a general formula for the Casimir force in a fractal setting.
  
 Fix the following configuration: two uncharged conducting plates are symmetrically placed in a Laakso Space and attached to all nodes that intersect the conductors in quantum graph approximations. To ease the calculation, we only consider conducting plates which attach to nodes in $F_{1}$ and require that eigenfunctions of the Laplacian satisfy Dirichlet conditions at conducting nodes and Kirchoff conditions at non-conducting nodes. At non-conducting nodes of degree one, this last requirement is equivalent to the Neumann condition that requires nodal derivatives to vanish. Moreover, these two restrictions preserve the self-adjointness of the Laplacian operator and incorporate appropriate boundary conditions at the plate by analogy to the laws of classical electrostatics. Most importantly, the two conducting plates within the Laakso Space are allowed to move as long as the symmetry between the two plates is preserved and the Laakso space is appropriately distorted.  If the plates move towards one another, intervals in the region between the plates of the quantum graph approximation are compressed, and intervals in the two exterior regions are stretched. Conversely, if the plates move apart, intervals in the interior region are stretched, and intervals in the exterior regions are compressed. 
 
\begin{definition}
\begin{enumerate}
\setlength{\itemsep}{2mm}
\item The $j=N$ Laakso space has $j_{n}=N \hspace{1mm}  ~\forall n$. 

\item $X_{0} =$ the distance between a conducting plate and the center of the Laakso space. 

\item $Z = $ the number of nodes lying between the two conducting plates in the $F_{1}$ quantum graph.

\end{enumerate}
\end{definition}

\vspace{2mm}

\begin{lemma}
Let two uncharged conducting plates be symmetrically attached to nodes in the $F_{1}$ quantum graph approximation so that Z nodes in $F_{1}$ lie between them. Then
\begin{enumerate}
 \item The number of cells that lie between the plates in $F_{n}$ is $1$ if $n=0$ and $\frac{ (Z+1)}{N}2^{n}I_{n} ~if~n \geq 1$. The number of exterior cells is $2$ if $n=0$, and $(1-\frac{Z+1}{N})2^{n}I_{n} ~if~n \geq 1$.\\
\item The number of interior loops in $F_{n}$ is $Z+1$ if $n=1$, and $\frac{Z+1}{N} 2^{n-1}I_{n-1}(j_{n}-2)$ if $n \geq 2$. The number of exterior loops in $F_{n}$ is $N-Z-3$ if $n=1$, and $(1-\frac{Z+1}{N})(2^{n-1}I_{n-1})(j_{n}-2)$ if $n \geq 2$.\\
 \item All V's are located in the exterior region, and there are $2^{n}$ of them in $F_{n}$ for $n \geq 1.$ \\
 \item The number of interior crosses in $F_{n}$ is $\frac{Z+1}{N}2^{n-2}I_{n-1} ~\forall n \geq 2$ , while the number of exterior crosses is $2^{n-2}[(1-\frac{Z+1}{N})I_{n-1}-1] ~ \forall n  \geq 2$.
\end{enumerate}
\end{lemma}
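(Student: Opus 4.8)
The lemma is a counting statement, and I would prove it by exactly the kind of column-bookkeeping used for the infinite square well in Section~\ref{subsect: SW} (Lemma~\ref{CrossLemma} and the surrounding lemmas), but with the square-well wall replaced by the conducting plates. The point that makes the answer cleaner here than in Theorem~\ref{thrm: SWspectrum} is that the plates are attached to nodes of $F_1$ and are only moved between such nodes, so a plate position is an \emph{exact} column boundary of every refinement $F_n$; hence there is no fractional analogue of $d_n$, no floors or ceilings, and (at levels $n\ge 2$) the plates land precisely at the centers of crosses. Throughout, write $j_n=N$, $I_n=N^n$, and use the V/loop/cross decomposition of $F_n$ already recalled in Section~\ref{subsect: SW}: in $F_n$ there are $2^n$ V's (all in the two extreme columns), $2^{n-1}I_{n-1}(j_n-2)$ loops arranged in $2^{n-1}$ rows of $I_{n-1}$ clusters of $j_n-2$, and $2^{n-2}(I_{n-1}-1)$ crosses in $2^{n-2}$ rows, with column boundaries as in Proposition~\ref{cross and loop boundary}.

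\textbf{Setup and parts (1) and (3).} By the symmetry of the configuration the plates sit at $x=i_1/N$ and $x=i_2/N=1-i_1/N$ with $1\le i_1<N/2<i_2\le N-1$, and $Z$ counts the $F_1$-nodes strictly between them, so $Z=i_2-i_1-1$, i.e.\ $i_2-i_1=Z+1$ and $2i_1=N-Z-1$. In $F_n$ the left plate is the column boundary of index $i_1I_{n-1}$, so the interior region is the block of columns indexed $i_1I_{n-1},\dots,i_2I_{n-1}-1$, a total of $(i_2-i_1)I_{n-1}=(Z+1)I_{n-1}$ columns. Since passing from $F_{n-1}$ to $F_n$ subdivides every cell into $N$ and then doubles the graph, and since a column of $F_{n-1}$ refines to $N$ columns of $F_n$ all lying on the same side of each plate (the plate positions being column boundaries at every level), both the interior and the exterior cell counts multiply by $2N$; starting from the value $2(Z+1)$ for interior cells of $F_1$ (the $Z+1$ interior loops carry two edges each) this gives $\tfrac{Z+1}{N}2^nI_n$ interior and $(1-\tfrac{Z+1}{N})2^nI_n$ exterior cells for $n\ge 1$, while the $n=0$ line is the convention forced by $F_0=[0,1]$. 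Part (3) is immediate and independent of the plates: the only V-columns of $F_n$ are columns $0$ and $I_n-1$, both exterior for every admissible placement since $i_1I_{n-1}\ge 1$, and $F_n$ carries $2^n$ V's.

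\textbf{The combinatorial heart, parts (2) and (4) for $n\ge 2$.} Here I would first observe that for $n\ge 2$ the index $i_1I_{n-1}=i_1N^{n-1}$ is a multiple of $N$, and that among the pattern intervals of $F_n$ (the V $[0,1]$; the $m$th loop cluster $[(m-1)N+1,mN-1]$; the $m$th cross $[mN-1,mN+1]$; the final V $[I_n-1,I_n]$) the only interval whose interior contains a multiple of $N$ is a cross, with $mN$ as its \emph{center}. Hence the left plate bisects cross number $m_1:=i_1I_{n-2}$ and, by symmetry, the right plate bisects cross number $m_2:=i_2I_{n-2}$, with $m_2-m_1=(Z+1)I_{n-2}=\tfrac{Z+1}{N}I_{n-1}$. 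Thus, per row, the interior block consists of the right half of cross $m_1$, the full loop clusters $m_1{+}1,\dots,m_2$ and full crosses $m_1{+}1,\dots,m_2{-}1$, and the left half of cross $m_2$. Multiplying by the row counts: the interior loop count is $2^{n-1}(m_2-m_1)(N-2)=\tfrac{Z+1}{N}2^{n-1}I_{n-1}(j_n-2)$, and the interior cross count is $2^{n-2}$ times $\big((m_2-1)-m_1\big)+1=\tfrac{Z+1}{N}I_{n-1}$, where the $+1$ is the single whole cross assembled from the two matched half-crosses, giving $\tfrac{Z+1}{N}2^{n-2}I_{n-1}$. Subtracting these from the totals $2^{n-1}I_{n-1}(j_n-2)$ and $2^{n-2}(I_{n-1}-1)$ yields the stated exterior loop count $(1-\tfrac{Z+1}{N})(2^{n-1}I_{n-1})(j_n-2)$ and exterior cross count $2^{n-2}[(1-\tfrac{Z+1}{N})I_{n-1}-1]$ (the exterior's two half-crosses likewise combining into one whole).

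\textbf{Base case $n=1$ and the main difficulty.} The $n=1$ statements of (1)--(3) I would check directly from $F_1=$ V, then $N-2$ loops, then V, with no crosses: the plates cut off both V's together with the loops in columns $1,\dots,i_1-1$ and $i_2,\dots,N-2$, leaving the $i_2-i_1=Z+1$ loops in columns $i_1,\dots,i_2-1$ inside and $N-2-(Z+1)=N-Z-3$ outside, with all V's and their cells exterior. The main obstacle is precisely the combinatorial heart: establishing cleanly that an $F_1$-node always refines to the center of a cross at deeper levels, and tracking the two straddling crosses so their halves combine into exactly the one extra cross that removes any half-cross term from part~(4) --- this is the point where the symmetric two-plate configuration is better behaved than the one-wall square well of Section~\ref{subsect: SW}. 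I would also verify the extreme placements and small $N$ (e.g.\ $i_1=1$, $N$ even or odd, $Z=0$) to confirm that $m_1$ remains a legitimate cross index and that no partial loop cluster ever appears in the interior.
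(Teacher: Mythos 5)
Your counts are correct and the argument goes through, but it is a genuinely different route from the paper's. You prove parts (2) and (4) by positional bookkeeping in the spirit of Proposition~\ref{cross and loop boundary}: you locate the plates at column index $i_1I_{n-1}$, observe that for $n\ge 2$ this is a multiple of $N$ and hence the center of a cross, and then count whole loop clusters and crosses per row between the two plate-centered crosses, treating the matched half-crosses as one whole. The paper instead argues by a cell budget and self-similarity: it first counts interior/exterior cells (your part (1), where the two arguments essentially coincide), then gets the loop count from the fact that each cell of $F_{n-1}$ spawns exactly $j_n-2$ loops in $F_n$, and gets the cross count by solving ``cells in crosses $=$ total cells $-$ cells in loops $-$ cells in V's'' and dividing by $8$, never identifying where the plates cut. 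The paper's version is shorter and needs no column-boundary analysis; yours is more geometric and yields extra information for free, namely that the plates bisect crosses and that there are $2^{n-1}$ plate-centered crosses per level, which the paper must establish separately in the proposition and corollary immediately following the lemma (and which is what you flag as the ``main obstacle'' --- your multiple-of-$N$ argument already settles it, so no gap remains). Both approaches rely on the same convention, stated after the paper's proof, that two disjoint half-crosses count as one whole cross.
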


 \begin{proof}
\begin{enumerate}
\item \emph{Interior Region}. By the self-similarity of the $j=N$ Laakso space, we know that $\forall n,k \in \mathbb{N}\cup \left\{0 \right\}$ and $\forall k: 0 \leq k<N$, the number of cells with $x$-coordinate in $[k/N,(k+1)/N]$ are equal for all $k$. We have already shown that the $F_{n}$ graph approximation contains $2^{n}I_{n}$ cells and see that $Z+1$ almost disjoint intervals of the form $[k/N,(k+1)/N]$  comprise the region between the conducting plates. Therefore, since each region has $2^{n}I_{n}/N$ cells in $F_{n}$ and $Z+1$ regions lie between the plates, $\frac{Z+1}{N} 2^{n} I_{n}$ cells lie between the conductors in $F_{n}$.\\
\indent \emph{Exterior Region}.  Subtracting the number of interior cells in $F_{n}$ from from the total yields the number of exterior cells: $(1-\frac{Z+1}{N})2^{n}I_{n}$. \\
\item \emph{Interior Region}.  At the first level of construction, we have $Z-2$ loops. At the $n-1$ stage of construction, we have $\frac{(Z+1)}{N}2^{n-1}I_{(n-1)}$ cells in the interior region. Since every cell at the $n-1$ stage will yield exactly $j_{n}-2$ loops in the $n$ level, we compute that $\frac{Z+1}{N} 2^{n-1}I_{n-1}(j_{n}-2)$ loops lie in the interior region of $F_{n}$ for $n \geq 2$.\\
\indent \emph{Exterior Region}.  Since $F_{1}$ has a total of $N-2$ loops and the interior region has only $Z+1$ of those loops, we are left with $N-Z-3$ loops in the exterior region for $n=1$. Since $(1-\frac{Z+1}{N})(2^{n-1}I_{n-1})$ cells inhabit the exterior region of $F_{n}$, we have by the same argument as before that $(1-\frac{Z+1}{N})(2^{n-1}I_{n-1})(j_{n}-2)$ loops are to be found in $F_{n}$ whenever $n \geq 2$. \\
\item This follows immediately from the Barlow-Evans construction of Laakso Spaces. \\
\item \emph{Interior Region}.  Since all V's are located in the exterior region, the only possible structures in the interior region are loops and crosses. Moreover, we know the total number of loops and the total number of cells in the interior region for any value for n, so letting $X_{n}$ =(number of interior $F_{N}$ cells in crosses) yields
\begin{equation*}
\frac{Z+1}{N}2^{n}I_{n-1}(j_{n}-2)~+~X_{n}=\frac{Z+1}{N}2^{n}I_{n}
\end{equation*}
\begin{equation*}
\implies X_{n}=\frac{Z+1}{N}2^{n+1}I_{n-1}.
\end{equation*} 
Since every cross is comprised of exactly 8 cells, we simply divide by 8 to calculate the total number of crosses in the interior region:
\begin{equation*}
\frac{Z+1}{N}2^{n-2}I_{n-1} ~\forall n \geq 2.
\end{equation*}
The total counts two disjoint half-crosses as one whole cross.\\ 
\indent \emph{Exterior Region}.  Every exterior cell in $F_{n}$ forms either part of a cross, loop, or V. Therefore, to find $X_{n}$=(number of exterior $F_{n}$ cells in crosses), we simply solve

\begin{equation*} 
2^{n+1}+\left\{ 1-\frac{Z+1}{N} \right\} 2^{n}I_{n-1}(j_{n}-2) +X_{n}= \left\{ 1-\frac{Z+1}{N} \right\} 2^{n}I_n 
\end{equation*}
\begin{equation*}
\implies  X_{n}= 2^{n+1} \left\{ \left\{ 1-\frac{Z+1}{N} \right\} I_{n-1}-1 \right\}. 
\end{equation*}
We divide the result by 8 to obtain the total number of exterior crosses:
\begin{equation}
2^{n-2}\left\{ \left\{ 1-\frac{Z+1}{N} \right\} I_{n-1}-1 \right\} ~ \forall n \geq 2. 
\end{equation}
\end{enumerate}
\end{proof} 
When counting the total number of crosses in the interior and exterior regions, we also treat two disjoint half-crosses as one whole cross. 

\begin{proposition}
The total number of crosses centered along the two conducting plates in $F_{n}$ is $2^{n-1} ~\forall n \geq 2$. 
\end{proposition}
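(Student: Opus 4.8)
The plan is to determine exactly which column of each $F_n$ quantum graph a conducting plate occupies, and then to read off how many crosses have that column as their center. We are in the $j=N$ Laakso space, so $I_n = N^n$ and $j_n=N$. Because each plate is attached to a node of $F_1$ and the configuration is symmetric about the center $x=\frac{1}{2}$, the two plates sit at $x = m_0/N$ and $x = (N-m_0)/N$ for some integer $m_0$; for the plates to be distinct and strictly interior (that is, $0 < X_0 < \frac{1}{2}$) we have $1 \le m_0 \le N-1$ with $m_0/N \ne \frac{1}{2}$. In particular $F_1$ contains no crosses at all, which matches the restriction $n \ge 2$ in the statement.

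First I would locate the plate columns inside $F_n$ for a fixed $n \ge 2$. The coordinate $m_0/N$ is the $c$-th column line of $F_n$, where $c = m_0\,I_n/I_1 = m_0 N^{\,n-1}$; since $c/N = m_0 N^{\,n-2}$ is a positive integer for $n \ge 2$, we may write $c = m j_n$ with $m := m_0 N^{\,n-2}$, and the elementary bound $1 \le m_0 \le N-1$ together with $N^{\,n-2} \ge 1$ gives $1 \le m \le I_{n-1} - 1$. By Proposition \ref{cross and loop boundary}(c) the column $m j_n$ is precisely the center column of the $m$-th cross in every row of $F_n$; in particular it carries no loop or V. Second, I would invoke the row count for crosses: as recalled in the proof of Lemma \ref{CrossLemma}, $F_n$ has exactly $2^{n-2}$ rows of crosses, so exactly $2^{n-2}$ crosses are centered along each plate column. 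Carrying this out for both of the (distinct) plate columns and summing yields $2 \cdot 2^{n-2} = 2^{n-1}$ crosses centered along the two plates.

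The proof is short precisely because the structural bookkeeping it needs — the exact placement of V's, loops, and crosses in $F_n$, and the fact that the crosses come in $2^{n-2}$ identical rows — is already packaged in Proposition \ref{cross and loop boundary} and Lemma \ref{CrossLemma}. The one point requiring care is the set of degenerate configurations that must be excluded by hypothesis: if $X_0 = \frac{1}{2}$ the plates land on the terminal columns, which host V's and no crosses, so the count would be $0$ rather than $2^{n-1}$; and $X_0 = 0$ (or $N$ even with $m_0 = N/2$) collapses the two plates onto a single column. Away from those cases the only real content is the divisibility remark $j_n \mid (I_n/I_1)$ for $n \ge 2$ (here $N \mid N^{\,n-1}$), which is what forces each plate column to be a cross center, after which the count is immediate.
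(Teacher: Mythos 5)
Your count is correct, but you reach it by a different route than the paper. You localize the plate columns combinatorially: since $I_n=N^n$, the plate attached at the node $m_0/N$ of $F_1$ sits on column $m_0N^{n-1}=mj_n$ of $F_n$ with $m=m_0N^{n-2}$ an integer in the range $1\le m\le I_{n-1}-1$ whenever $n\ge 2$, so by Proposition \ref{cross and loop boundary}(c) each plate column is exactly a cross-center column; multiplying by the $2^{n-2}$ rows of crosses (as counted in the proof of Lemma \ref{CrossLemma}) and by the two plates gives $2\cdot 2^{n-2}=2^{n-1}$. The paper instead argues inductively from the projective construction: the two conducting nodes of $F_1$ have degree four, each level of the construction places one cross centered over each conducting node while doubling the number of conducting degree-four nodes, so the count of plate-centered crosses doubles from $2$ in $F_2$, yielding $2^{n-1}$ in $F_n$. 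Your argument buys explicitness--it pins down precisely which columns carry the plates, makes the needed divisibility $N\mid N^{n-1}$ visible, and flags the degenerate placements (plates at the endpoints or coincident at the center) that the paper leaves implicit in the standing hypotheses $X_0\in(0,1/2)$ and the choice of two distinct symmetric nodes--while the paper's induction is shorter, needs no column bookkeeping, and works uniformly for any admissible attachment because it only uses the duplication step of the construction. Both arguments are consistent with the global cross count $2^{n-2}(I_{n-1}-1)$, and your restriction to $n\ge 2$ (no crosses in $F_1$) matches the statement.
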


\begin{proof}
In $F_{1}$ we have two conducting nodes. Since each of these two nodes is degree four, we will have at the second level of construction two crosses centered over the conductors. Each of these new crosses will contribute two additional conducting nodes, so that four degree four nodes lie across the conductors in $F_{2}$.  By induction, we arrive at a general formula for the total number of crosses centered along the two conducting plates in $F_{n}$: $2^{n-1} ~\forall n \geq 2$.
\end{proof}
\begin{corollary} 
There are $2^{n-1}$ half-crosses lying along the conducting plates in each of the interior and exterior regions.
\end{corollary}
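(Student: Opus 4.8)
The plan is to obtain the statement as an immediate consequence of the Proposition just proved together with the prescribed boundary conditions and the column geometry of crosses. First I would invoke that Proposition: for every $n \ge 2$ there are exactly $2^{n-1}$ crosses in $F_{n}$ whose centers lie on the two conducting plates. The key observation is that the center of each such cross is a conducting node, so the imposed Dirichlet condition forces every admissible eigenfunction to vanish there. Geometrically this severs the cross at its center into its two constituent half-crosses, so each of the $2^{n-1}$ plate-centered crosses contributes precisely two half-crosses.

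Next I would account for where these two halves lie. Since each conducting plate is, by construction, the interface separating the interior region from one of the two exterior regions, and since a cross occupies two adjacent columns with its center node sitting on the column line between them (Proposition \ref{cross and loop boundary}), a plate-centered cross straddles the plate with one X on each side: one half-cross lies in the interior region and the complementary half lies in the exterior region. Collecting the interior halves over all $2^{n-1}$ plate-centered crosses gives $2^{n-1}$ half-crosses in the interior region, and collecting the exterior halves gives $2^{n-1}$ half-crosses in the exterior region, which is the claim.

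The only point needing care — and the main, albeit mild, obstacle — is confirming that a plate-centered cross is genuinely bisected symmetrically by the plate, i.e. that exactly one X lies on each side rather than the plate clipping the cross asymmetrically. This is where I would use the symmetric placement of the plates (attached to $F_{1}$ nodes, with the self-similar refinement keeping all later-generation conducting nodes on the same column line) together with the column-boundary description of crosses, so that the plate coincides with the column line through the cross's center. Once this alignment is verified, the corollary follows at once, with the same convention noted after the Lemma that two disjoint half-crosses are counted as one whole cross being what makes the bookkeeping consistent.
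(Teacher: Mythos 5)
Your argument is correct and is essentially the paper's: the paper's own proof simply invokes symmetry (each of the $2^{n-1}$ plate-centered crosses from the preceding proposition is cut by the plate into one interior and one exterior half-cross), which is exactly the bookkeeping you spell out, just in more detail about the Dirichlet node and the column alignment. One tiny terminological point: the paper uses ``X'' for the upper/lower copies of a cross rather than its left/right halves, so your phrase ``one X on each side'' should read ``one half-cross on each side,'' as you in fact say in the next clause.
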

\begin{proof}
By symmetry, we may deduce that the  number of half-crosses along the conductors in the exterior region is always equal to the number of half-crosses in the interior region.
\end{proof}

\begin{lemma}
If conducting plates are attached to two symmetric nodes of a $j=N$ Laakso space so that $Z$ nodes lie between the plates in $F_{1}$, and the plates are moved to a distance $X_{0}$ from the center, allowing interior and exterior regions of the Laakso space to expand and contract, then $~\forall n \geq 1$, the interior and exterior cell lengths in $F_{n}$ will be functions of $X_{0}$ given by $\frac{2NX_{0}}{Z+1}I_{n}^{-1}$ and $\frac{1-2X_{0}}{1-\frac{Z+1}{N}}I_{n}^{-1}$, respectively. 
\end{lemma}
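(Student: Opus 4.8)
The plan is to determine the interior and exterior cell lengths by combining the cell counts from the first Lemma of Section~\ref{sect: Casimir} with the two obvious geometric constraints: the lengths along each region must sum to the physical width of that region, and the Laakso structure forces all cells within a given region (interior or exterior) to have equal length by self-similarity. First I would record that, in the $j=N$ Laakso space, the region between the plates has total $x$-width $2X_{0}$ (the plates sit symmetrically at distance $X_{0}$ from the center, so the interior spans $[\tfrac12-X_{0},\tfrac12+X_{0}]$), while the two exterior regions together have width $1-2X_{0}$. This uses only part (2) of the Definition block and the symmetry of the configuration.

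Next I would count cells per region in $F_{n}$ and divide. From part~(1) of the cell-count Lemma, the number of interior cells in $F_{n}$ is $\tfrac{Z+1}{N}2^{n}I_{n}$ for $n\geq 1$. However, I must be careful: ``cell length'' here refers to the $x$-extent of a cell, and in $F_{n}$ there are $2^{n}$ horizontally-stacked copies, so the number of distinct columns (i.e.\ the number of cells one traverses in the $x$-direction across the interior) is $\tfrac{Z+1}{N}I_{n}$, not $\tfrac{Z+1}{N}2^{n}I_{n}$. Since all these columns are congruent by self-similarity of the $j=N$ space, each interior cell has $x$-length
\begin{equation*}
\frac{2X_{0}}{\frac{Z+1}{N}I_{n}} = \frac{2NX_{0}}{Z+1}I_{n}^{-1},
\end{equation*}
which is the claimed formula. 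The same bookkeeping for the exterior region gives $(1-\tfrac{Z+1}{N})I_{n}$ columns spanning total width $1-2X_{0}$, hence exterior cell length
\begin{equation*}
\frac{1-2X_{0}}{(1-\frac{Z+1}{N})I_{n}} = \frac{1-2X_{0}}{1-\frac{Z+1}{N}}I_{n}^{-1}.
\end{equation*}
I would also note the consistency check $X_{0}=\tfrac14,\ Z+1=\tfrac{N}{2}$ recovers the undistorted unit cell length $I_{n}^{-1}$ in both formulas.

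The main obstacle is the $2^{n}$ factor: the cell-count Lemma is phrased in terms of \emph{total} cells (counting all horizontal copies), whereas the length statement is really about columns, and I need to argue cleanly that dividing out the $2^{n}$ is legitimate — i.e.\ that the $2^{n}$ copies are genuinely stacked ``in parallel'' in the $x$-direction rather than in series, so that the $x$-width of a region is traversed by exactly (total cells)$/2^{n}$ columns, each of equal length. This follows from the column-alignment convention set up in Section~\ref{PB} (``align the nodes in columns,'' giving horizontal and vertical symmetry of each $F_{n}$) together with Proposition~\ref{cross and loop boundary}, which shows the shapes tile the columns of $F_{n}$ in a single left-to-right sequence; I would invoke these to justify that each region is a contiguous block of columns and that self-similarity of the $j=N$ space makes all columns within a region isometric. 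Everything else is the one-line division displayed above.
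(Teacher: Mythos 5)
Your proof is correct and is essentially the paper's argument: both reduce to dividing a region's total width ($2X_{0}$ interior, $1-2X_{0}$ exterior) by the number of subdivision intervals spanning that region in $F_{n}$, using the uniform subdivision of cells and the symmetry of the configuration. The paper gets there slightly more directly — it computes the distorted $F_{1}$ cell lengths $\tfrac{2X_{0}}{Z+1}$ and $\tfrac{1-2X_{0}}{N-(Z+1)}$, reads off the scaling factors relative to the unperturbed length $\tfrac{1}{N}$, and invokes self-similarity to conclude that all deeper interior and exterior cells are scaled by the same ratios (hence length $I_{n}^{-1}$ times the scale factor) — which avoids the cell-count lemma entirely and hence the $2^{n}$-copies-versus-columns bookkeeping you were careful to justify.
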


\begin{proof}
Each cell in the unperturbed $F_{1}$ quantum graph approximation has length 1/N. If we move the conducting plates to a distance $X_{0}$ from the center, every $F_{1}$ cell in the interior region will have length $\frac{2X_{0}}{Z+1}$. Therefore, the scaling factor is $\frac{2X_{0}N}{Z+1}$. Moreover, every $F_{1}$ cell in the exterior region will have length $\frac{1-2X_{0}}{N-(Z+1)}$ so that the scaling factor is $\frac{1-2X_{0}}{1-\frac{Z+1}{N}}$. Lastly, it is clear from the self-similarity of j=N Laakso spaces that interior and exterior cells from more detailed quantum graph approximations will also be scaled by the same ratios. 
\end{proof}
\begin{corollary}
The interior and exterior metric diameters in $F_{n}$ $~\forall n \geq 1$ will be functions of $X_{0}$ given by $\frac{Z+1}{2NX_{0}}I_{n}$ and $\frac{1-\frac{Z+1}{N}}{1-2X_{0}}I_{n}$, respectively. 
\end{corollary}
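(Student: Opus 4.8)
The plan is to obtain the corollary as an immediate reciprocal of the preceding Lemma, so the real work is only in pinning down what \emph{metric diameter} means in this context. The sense intended is the eigenvalue-relevant length scale of a region built from edges of a common length $\ell$, namely $1/\ell$ (equivalently, the number of columns crossing the region per unit of metric width). This matches the unperturbed situation, where $F_n$ has cell length $I_n^{-1}$, ``diameter'' $I_n$, and Laplacian eigenvalues on a single cell equal to $k^2\pi^2 I_n^2$; it is also the quantity that will actually enter the Casimir computation later, so fixing this convention is the substantive first step.

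With that convention in place, I would note that in $F_n$ the interior region is traversed by $\frac{Z+1}{N}I_n$ columns and spans metric width $2X_0$ (the two plates lie at distance $X_0$ on either side of the center), while the exterior region is traversed by $\bigl(1-\frac{Z+1}{N}\bigr)I_n$ columns and spans metric width $1-2X_0$; this is exactly the data already packaged in the Lemma, whose interior and exterior cell lengths $\frac{2NX_0}{Z+1}I_n^{-1}$ and $\frac{1-2X_0}{1-\frac{Z+1}{N}}I_n^{-1}$ are the respective widths divided by the respective column counts. Taking reciprocals then yields the interior metric diameter $\frac{Z+1}{2NX_0}I_n$ and the exterior metric diameter $\frac{1-\frac{Z+1}{N}}{1-2X_0}I_n$. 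The passage to all $n\geq 1$ requires no new argument: it is inherited from the Lemma, which in turn uses the self-similarity of the $j=N$ Laakso space to propagate the $F_1$ scaling factors to every level.

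The main thing to be careful about is not a mathematical obstacle but a terminological one: ``metric diameter'' must be read as inverse cell length (columns per unit metric width), not as the genuine geometric diameter of the region. The latter would be $2X_0$ for the interior and $1-2X_0$ for the exterior, independent of $n$, and would not reproduce the stated $I_n$-dependence. Once that reading is adopted, the corollary is one line from the Lemma, and nothing beyond the bookkeeping of column counts and region widths is involved.
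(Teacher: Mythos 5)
Your proposal is correct and matches the paper's route: the corollary is simply the multiplicative inverse of the cell lengths from the preceding Lemma, and your reading of ``metric diameter'' as inverse cell length is exactly the convention the paper itself states in the proof of the following proposition (``The metric diameter of a cell is the multiplicative inverse of cell length''). Nothing further is needed.
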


Cells in $F_{0}$ require separate treatment. 
\begin{proposition}
The interior region in $F_{0}$ has metric diameter $\frac{1}{2X_{0}}$, while each of the two exterior regions has metric diameter $\frac{2}{1-2X_{0}}$. 
\end{proposition}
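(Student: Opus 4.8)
The plan is to compute the three pieces into which the conducting plates cut $F_{0}$ and to read off their metric diameters using the same convention as in the Corollary immediately above, namely that the metric diameter of a region equals the reciprocal of the common length of the cells it contains. The reason $F_{0}$ must be treated separately is that, because $I_{0}=1$, the approximation $F_{0}=[0,1]$ carries no subdivision structure: each region is a single cell (edge), so its metric diameter is simply the reciprocal of that edge's length, and the column-counting arguments that drive the $F_{n}$, $n\ge 1$, formulas do not apply.

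First I would locate the plates in $F_{0}$. By the definition of $X_{0}$ together with the symmetry requirement on the plates, the two conducting nodes sit at Laakso-distance $X_{0}$ on either side of the center, i.e.\ at the points $\tfrac12-X_{0}$ and $\tfrac12+X_{0}$ of the interval $[0,1]$. These two points partition $[0,1]$ into exactly three edges: the interior edge $[\tfrac12-X_{0},\tfrac12+X_{0}]$ and the two exterior edges $[0,\tfrac12-X_{0}]$ and $[\tfrac12+X_{0},1]$, which are precisely the interior and exterior regions at level $0$.

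Next I would compute their lengths. The interior edge has length $2X_{0}$, the factor of two arising from the two sides of the center; the remaining length $1-2X_{0}$ is split evenly between the two exterior edges by the imposed symmetry, so each exterior edge has length $\tfrac{1-2X_{0}}{2}$. Taking reciprocals, as dictated by the metric-diameter convention, gives interior metric diameter $\tfrac{1}{2X_{0}}$ and exterior metric diameter $\dfrac{2}{1-2X_{0}}$, which is the claim. As a consistency check, substituting $n=0$ (so $I_{0}=1$) into the Corollary would give $\tfrac{Z+1}{2NX_{0}}$ for the interior, which agrees with $\tfrac{1}{2X_{0}}$ only when $Z+1=N$; this discrepancy is exactly why $F_{0}$ cannot be subsumed under the $n\ge 1$ statement.

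There is no substantive obstacle in this proposition — it is a bookkeeping statement — so the only points requiring care are the factor of two hidden in ``distance from the center'' (which converts $X_{0}$ into an interior length of $2X_{0}$) and the consistent reading of ``metric diameter'' as the reciprocal of cell length, so that the $F_{0}$ values dovetail with the $F_{n}$ formulas for $n\ge 1$ and with the separate treatment of $F_{0}$ cells flagged just before the statement.
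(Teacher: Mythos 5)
Your proposal is correct and follows essentially the same route as the paper: the paper's proof simply invokes the convention that metric diameter is the multiplicative inverse of cell length, and your explicit computation of the interior length $2X_{0}$ and the exterior lengths $\tfrac{1-2X_{0}}{2}$ fills in exactly the bookkeeping the paper leaves implicit. The additional consistency remarks are fine but not needed.
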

\begin{proof}
The metric diameter of a cell is the multiplicative inverse of cell length. 
 \end{proof}
To demonstrate that the Laplacian operator on our modified space is self-adjoint, it suffices to prove the result on every quantum graph approximation and check for mutual compatibility. 

\begin{definition}
\label{Bdef}
The differential operator $B_{n}$ acts on $F_{n}$ by 
\begin{equation*}
B_{n}[f]=-\frac{d^{2}}{dx_{e}^{2}} f
\end{equation*}
along each edge e with $\Dom(B_{n})=\left\{ \right.$continuous $f \in H^{2}(e)~\forall e$ with Dirichlet vertex conditions at each conducting node and Kirchoff vertex conditions at each non-conducting node$\left.  \right\}$.
\end{definition}

\begin{theorem}
$B_{n}$ is a self-adjoint operator on $F_{n}$.
\end{theorem}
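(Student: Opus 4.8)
The plan is to realize $B_n$ as the self-adjoint operator associated to a closed, non-negative quadratic form, which sidesteps any direct deficiency-index computation. First I would record that $F_n$ is a finite metric graph, so $L^2(F_n) = \bigoplus_e L^2(e)$ is a finite direct sum of $L^2$-spaces on bounded intervals and $B_n$ is a restriction of the direct sum of the one-dimensional operators $-d^2/dx_e^2$. Integrating by parts on each edge gives, for $f,g \in \Dom(B_n)$,
\begin{equation*}
\langle B_n f, g\rangle - \langle f, B_n g\rangle = \sum_{v \in V(F_n)} \sum_{e \ni v} \left( f'(v)\,\overline{g(v)} - f(v)\,\overline{g'(v)} \right),
\end{equation*}
with all edge derivatives taken in the direction pointing away from $v$. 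At a conducting node both $f$ and $g$ vanish; at a non-conducting node continuity lets us pull $\overline{g(v)}$ and $f(v)$ out of the inner sum, and then the Kirchhoff condition $\sum_{e \ni v} f'(v) = 0$ — which at a degree-one vertex is exactly the Neumann condition — annihilates each term. Hence $B_n$ is symmetric, and the same computation shows that the sesquilinear form $q(f,g) = \sum_e \int_e f'\,\overline{g'}\,dx_e$ is non-negative on the form domain $\mathcal{D} = \{ f \in \bigoplus_e H^1(e) : f \text{ is continuous at every node and } f(v) = 0 \text{ at every conducting node} \}$, which contains $C_c^\infty$ of the edge interiors and is therefore dense in $L^2(F_n)$.

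Next I would verify that $(q,\mathcal{D})$ is closed. On $\bigoplus_e H^1(e)$ the form norm $\big(q(f,f) + \|f\|_{L^2(F_n)}^2\big)^{1/2}$ is exactly the $\bigoplus_e H^1(e)$ norm, and $\mathcal{D}$ is a closed subspace there, since continuity across vertices and the finitely many Dirichlet constraints are closed conditions (point evaluation is continuous on $H^1$ of a bounded interval). So $\mathcal{D}$ is complete in the form norm, i.e.\ $q$ is closed. By the representation theorem for closed, densely defined, non-negative forms there is a unique self-adjoint operator $\tilde B_n \geq 0$ with form domain $\mathcal{D}$ and $q(f,g) = \langle \tilde B_n f, g\rangle$ for all $f \in \Dom(\tilde B_n)$, $g \in \mathcal{D}$.

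The remaining step — and the one demanding the most care — is to identify $\tilde B_n$ with $B_n$. Given $f \in \Dom(\tilde B_n)$, testing $q(f,g) = \langle \tilde B_n f, g\rangle$ against $g \in C_c^\infty$ supported in the interior of a single edge $e$ shows $f|_e \in H^2(e)$ with $-f'' = \tilde B_n f$ on $e$; integrating by parts back then reduces the form identity to $\sum_{v}\sum_{e \ni v} f'(v)\,\overline{g(v)} = 0$ for all $g \in \mathcal{D}$. Choosing $g$ supported near one non-conducting vertex and using continuity of $g$ forces $\sum_{e \ni v} f'(v) = 0$, the Kirchhoff condition; at conducting vertices the Dirichlet condition is already built into $\mathcal{D}$. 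Thus $f \in \Dom(B_n)$ with $\tilde B_n f = B_n f$. Conversely, any $f \in \Dom(B_n)$ lies in $\mathcal{D}$ and satisfies $q(f,g) = \langle B_n f, g\rangle$ for all $g \in \mathcal{D}$ by the integration-by-parts identity above, hence $f \in \Dom(\tilde B_n)$. Therefore $B_n = \tilde B_n$ is self-adjoint. The main obstacle is precisely this last identification: keeping the vertex bookkeeping honest across the mixed Dirichlet/Kirchhoff conditions and the vertices of degree one, two, and four occurring in $F_n$. This, however, is entirely parallel to the free-case analysis of \cite{ST08} and standard quantum-graph theory, and mutual compatibility of the $\{B_n\}$ follows by the same restriction argument used there.
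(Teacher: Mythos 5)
Your proof is correct, but it follows a genuinely different route from the paper. The paper disposes of the theorem in a few lines by citing Kuchment's matrix characterization of self-adjoint vertex conditions on a finite quantum graph: at each vertex one writes the conditions as $C_vF_v+D_vF'_v=0$, notes that Dirichlet and Kirchhoff (Neumann at degree-one vertices) conditions are of this admissible form, and uses the locality of the conditions to conclude. You instead build the operator from the quadratic form $q(f,g)=\sum_e\int_e f'\overline{g'}\,dx_e$ on the domain of continuous, edgewise-$H^1$ functions vanishing at conducting nodes, verify closedness via the equivalence of the form norm with the $\bigoplus_e H^1(e)$ norm (point evaluations being continuous on $H^1$ of a bounded interval), invoke the representation theorem, and then carry out the domain identification by testing against edge-interior $C_c^\infty$ functions and against functions localized at single vertices to recover the Kirchhoff conditions. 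The trade-off is clear: the paper's argument is shorter and leans on the quoted quantum-graph machinery, whereas yours is self-contained, does not depend on the matrix criterion (which, as quoted in the paper, omits the accompanying requirement that $C_vD_v^*$ be self-adjoint and relies on the reader knowing the full statement), and yields as byproducts the non-negativity of $B_n$ and an explicit form domain, which is convenient for the spectral computations that follow. The price is the vertex bookkeeping in the identification step, which you handle correctly, including the degree-one case where Kirchhoff reduces to Neumann; your closing remark that compatibility of the family $\{B_n\}$ follows as in the free case of \cite{ST08} matches how the paper treats that issue as well.
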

\begin{proof}
Let $v$ be a vertex of degree $d$ in $F_{n}$ and $f$ a function on $F_{n}$.  Define $F_{v}=(f_{1}(v),f_{2}(v),...,f_{d}(v))^{T}$ and $F^{\prime}_{v}=(f_{1}^{\prime}(v),f_{2}^{\prime}(v),...,f_{d}^{\prime}(v))^{T}$. By Theorem 3 in \cite{Kuchment2008}, to prove self-adjointness it suffices to find for each vertex in a quantum graph approximation matrices $C_{v}$ and $D_{v}$ such that $C_{v}F+D_{v}F^{\prime}=0$ and $(C_{v}D_{v})$ is maximal. Since this condition is local and both Dirichlet and Neumann vertex conditions produce self-adjoint operators on a quantum graph, the theorem follows.
\end{proof}

\begin{theorem}
\label{Spectrum}
Let $B_{n}$ be the operator in Definition \ref{Bdef}, and let $\Delta$ be the minimal self-adjoint extension of the sequence $\left\{ B_{n} \right\}$. Then
\begin{eqnarray*}
\sigma(\Delta)=&& \bigcup_{k=1}^\infty \left\{\left[ \frac{ k \pi}{2X_{0}} \right]^{2}\right\} \cup  \bigcup_{k=0}^\infty \left\{\left[ \frac{ (k+1/2) \pi}{(1-2X_{0})/2} \right]^{2}\right\}\\& \cup& \bigcup_{k=0}^\infty \left\{\left[(k+1/2) \pi \frac{N-(Z+1)}{1-2X_{0}} \right]^{2}\right\}\cup \bigcup_{k=1}^\infty \left\{\left[k \pi \frac{N-(Z+1)}{1-2X_{0}} \right]^{2}\right\}\\ &\cup& \bigcup_{k=1}^\infty  \left\{\left[ k \pi \frac{Z+1}{2X_{0}} \right]^{2}\right\} \cup \bigcup_{n=2}^{\infty}  \bigcup_{k=0}^\infty \left\{\left[ I_{n}(k+1/2) \pi \frac{1-\frac{Z+1}{N}}{1-2X_{0}} \right]^{2}\right\}  \\ &\cup& \bigcup_{n=2}^{\infty} \bigcup_{k=1}^\infty \left\{\left[I_{n} k \pi \frac{1-\frac{Z+1}{N}}{1-2X_{0}} \right]^{2}\right\}  \cup \bigcup_{n=2}^{\infty} \bigcup_{k=1}^\infty \left\{\left[ I_{n} k \pi \frac{1-\frac{Z+1}{N}}{2(1-2X_{0})} \right]^{2}\right\} \\ &\cup&  \bigcup_{n=2}^{\infty} \bigcup_{k=1}^\infty \left\{\left[I_{n}k \pi \frac{Z+1}{2NX_{0}} \right]^{2}\right\} \cup \bigcup_{n=2}^{\infty} \bigcup_{k=1}^\infty \left\{\left[ I_{n}k \pi \frac{Z+1}{4NX_{0}} \right]^{2}\right\}.
\end{eqnarray*} 

\noindent Eigenvalues in these ten sets have the following respective multiplicities: 

\begin{enumerate}
\item[1)] $1$;
\item[2)] $2$;
\item[3)] $2$;
\item[4)] $N-Z-3$;
\item[5)] $Z+1$;
\item[6)] $2^{n}$;
\item[7)] $(1-\frac{Z+1}{N})I_{n-1}2^{n-1}(N-2)+2^{n-1}(1-\frac{Z+1}{N})I_{n-1}$;
\item[8)] $2^{n-2}[(1-(\frac{Z+1}{N})I_{n-1}-1]-2^{n-2}$;
\item[9)] $\frac{Z+1}{N}I_{n-1}2^{n-1}(N-2)+2^{n-1}\frac{Z+1}{N}I_{n-1}+2^{n-1}$;
\item[10)] $2^{n-2}[\frac{Z+1}{N}I_{n-1}-1]$.
\end{enumerate}
\end{theorem}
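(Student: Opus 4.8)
The plan is to follow the same template as the proof of the free spectrum \eqref{eq: Lspectrum} and of Theorem~\ref{thrm: SWspectrum}. The engine is the orthogonal decomposition principle: since $\Delta$ is by hypothesis the minimal self-adjoint extension of the compatible sequence $\{B_n\}$, the argument of \cite{ST08} that underlies Theorem~\ref{SpectrumOrtho} applies verbatim with $B_n$ in place of $A_n$, giving
\[
\sigma(\Delta)=\bigcup_{n=0}^{\infty}\sigma\bigl(B_n|_{D_n'}\bigr),\qquad D_0'=D_0,\quad D_n'=(D_{n-1}')^{\perp}\cap D_n,
\]
where $D_n$ is the pullback of $\Dom(B_n)$ under the projection $\Phi_n$. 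So it suffices to find, for each $n$, the eigenfunctions on $F_n$ satisfying the Dirichlet/Kirchhoff conditions of Definition~\ref{Bdef} that are orthogonal to every eigenfunction carried by the coarser graphs $F_0,\dots,F_{n-1}$; by the now-standard argument (cf.\ Lemma~\ref{SplitCrosses}) such a function must be supported on a single one of the V, loop, or cross pieces of $F_n$ and must vanish at the nodes where that piece meets $F_{n-1}$.

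First I would dispose of $n=0$. The two plates cut $F_0$ into one interior segment carrying Dirichlet data at both ends and two exterior segments carrying Dirichlet data at the plate and a degree-one Kirchhoff (i.e.\ Neumann) condition at the free end; by the propositions preceding Definition~\ref{Bdef} their rescaled lengths are $2X_0$ and $(1-2X_0)/2$. Solving these three Sturm--Liouville problems produces $\{(k\pi/(2X_0))^2\}_{k\ge1}$ with multiplicity $1$ (set 1) and $\{((k+\tfrac12)\pi/(\tfrac{1-2X_0}{2}))^2\}_{k\ge0}$ with multiplicity $2$ (set 2).

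For $n=1$ I would use the decomposition of $F_1$ into $2$ exterior V's, $Z+1$ interior loops, and $N-Z-3$ exterior loops (the $n=1$ part of the counting Lemma above, via \cite{BDMS10}). On a loop the functions vanishing at the two contact nodes are the antisymmetric modes $\sin(k\pi x/\ell)$, giving an integer-indexed tower; on a V the Dirichlet--Neumann modes $\sin((k+\tfrac12)\pi x/\ell)$ give a half-integer tower; here $\ell$ is the interior or exterior $F_1$ cell length supplied by the scaling lemma. This yields sets 3), 4), 5) with multiplicities equal to the shape counts $2$, $N-Z-3$, $Z+1$. The same computation for general $n\ge2$ uses the full counting Lemma, the Proposition on crosses centered on the plates, and its Corollary: $F_n$ has $2^n$ exterior V's together with interior and exterior loops and crosses, each cross lying on a plate being split into an interior half-cross and an exterior half-cross. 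A V contributes one half-integer tower, a loop one integer tower, a whole cross two integer towers plus one tower at half that frequency (the $I_n^2/4$-type eigenvalues of the free case), and a half-cross behaves like a loop and contributes one integer tower; the overall frequency is $I_n$ times the interior scaling $\tfrac{Z+1}{2NX_0}$ or the exterior scaling $\tfrac{1-(Z+1)/N}{1-2X_0}$. Sorting the towers by eigenvalue and adding shape counts yields sets 6)--10): set 6) from exterior V's, set 7) from exterior loops plus the integer modes of exterior crosses, set 8) from the halved modes of whole exterior crosses, and likewise sets 9), 10) from interior loops and interior crosses.

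The hard part will be the bookkeeping in the $n\ge2$ step, not the analysis: one has to (i) track precisely which pieces of $F_n$ are severed by the plates so the half-cross contributions are counted once in each region and with the correct multiplicity, and (ii) confirm that the shape-supported, contact-vanishing eigenfunctions actually span all of $D_n'$ — a dimension count with no modes missing and none counted twice. This is the same style of argument as in Lemma~\ref{CrossLemma}, Lemma~\ref{SplitCrosses}, and \cite{BDMS10}, and it is precisely where the $2^{n-2}$-type correction terms in multiplicities 7)--10) originate.
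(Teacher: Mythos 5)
Your outline is essentially the paper's own proof: the paper also invokes the orthogonal-decomposition principle of Theorem~\ref{SpectrumOrtho} with $B_n$ in place of $A_n$, then computes $\sigma(B_0|_{D_0'})$, $\sigma(B_1|_{D_1'})$, and $\sigma(B_n|_{D_n'})$ for $n\ge 2$ exactly as you describe (Lemmas~\ref{SpectrumF_{0}}, \ref{SpectrumF_{1}}, \ref{SpectrumF_{n}}), using the interior/exterior shape counts, the rescaled cell lengths, and the count of plate-centered crosses to assemble the ten sets and their multiplicities. One bookkeeping correction to your sketch: a plate-centered half-cross does \emph{not} behave like a loop with a single integer tower. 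Its exterior (or interior) half consists of \emph{two} edges running from the conducting node (Dirichlet) to corner nodes where orthogonality forces the eigenfunction to vanish, so each half-cross contributes the integer tower with multiplicity two; this is precisely how the paper gets total multiplicity $2^{n}$ from the $2^{n-1}$ exterior half-crosses, and it is needed for the second terms in multiplicities 7) and 9) — counting one tower per half-cross would leave you short by $2^{n-1}$ there. With that adjustment (and the corresponding one in the interior region), your dimension count closes and the argument reproduces the stated spectrum and multiplicities.
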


Using Theorem \ref{SpectrumOrtho}, we break the proof of Theorem \ref{Spectrum} into separate lemmas.  
\begin{lemma}
\label{SpectrumF_{0}}
For $F_{0}, \sigma(B_{0}|_{D_{0}^{\prime}})= \bigcup_{k=0}^\infty \left\{ [\frac{ (k+1/2) \pi}{(1-2X_{0})/2}]^{2} \right\}\cup \bigcup_{k=1}^\infty \left\{ [\frac{ k \pi}{2X_{0}}]^{2} \right\}$ with multiplicities $2$ and $1$, respectively. 
\end{lemma}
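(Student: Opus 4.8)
The plan is to apply Theorem~\ref{SpectrumOrtho} at the bottom of the projective system. Since $D_0' = D_0 = \Phi_0^* \Dom(B_0)$, there is no orthogonal projection to perform at $n=0$, so it suffices to diagonalize $B_0$ on the distorted copy of $F_0$. First I would pin down the geometry. After the plates are moved to distance $X_0$ from the center, the Proposition immediately preceding Definition~\ref{Bdef} identifies the interior region of $F_0$ as a cell of length $2X_0$ and each of the two exterior regions as a cell of length $(1-2X_0)/2$ (these are exactly the arclength distances: center to plate is $X_0$, and plate to outer endpoint is $1/2 - X_0$). The two plate points are conducting nodes, so $B_0$ imposes Dirichlet conditions there, while the outer endpoints $x=0,1$ are degree-one non-conducting nodes, where the Kirchhoff condition is the Neumann condition. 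The Dirichlet conditions at the plates decouple the three segments, so $B_0$ is unitarily equivalent to the orthogonal direct sum of three copies of $-d^2/ds^2$: one on $[0,2X_0]$ with Dirichlet conditions at both ends, and two isometric copies of the operator on $[0,(1-2X_0)/2]$ with a Dirichlet condition at the plate end and a Neumann condition at the outer end.

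Next I would solve these three one-dimensional Sturm--Liouville problems, exactly as in the free case of \cite{ST08} and in Lemma~\ref{SWSpectrumF_{0}}. On $[0,2X_0]$ the Dirichlet--Dirichlet eigenfunctions are $\sin\!\big(k\pi s/(2X_0)\big)$ for $k\ge 1$, with simple eigenvalues $\big[k\pi/(2X_0)\big]^2$. On $[0,(1-2X_0)/2]$ with Dirichlet at one end and Neumann at the other, the eigenfunctions are $\sin\!\big((k+1/2)\pi s/((1-2X_0)/2)\big)$ for $k\ge 0$, with simple eigenvalues $\big[(k+1/2)\pi/((1-2X_0)/2)\big]^2$. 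Since the two exterior segments are isometric, each exterior eigenvalue occurs once in each copy and hence with multiplicity $2$ in $\sigma(B_0|_{D_0'})$, while the interior eigenvalues retain multiplicity $1$. Completeness of the Dirichlet--Dirichlet and Dirichlet--Neumann sine systems on the respective intervals shows there are no further eigenvalues, and because each of the three segments carries at least one Dirichlet endpoint, $0$ is not an eigenvalue. Taking the union of the two families yields precisely the spectrum and the multiplicities $2$ and $1$ asserted in the lemma; the new feature relative to the free and square-well bottom-level lemmas is exactly the factor of $2$ coming from the two exterior regions.

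The step I expect to need the most care is not the eigenvalue computation but the identification of the effective operator at level $0$: one must check that pulling a function on $F_0$ back through $\Phi_0$ and imposing the vertex conditions of $\Dom(B_1)$ (and of every higher level) forces the function to vanish at the two points of $F_0$ lying under the conducting nodes, so that the operative domain is the direct-sum domain described above rather than the plain Neumann domain on $[0,1]$ that governs the free case. Closely related is verifying that the metric rescaling recorded in that Proposition---interior cell length $2X_0$, exterior cell length $(1-2X_0)/2$---is the one that actually parametrizes $-d^2/ds^2$, so that $s$ above is the correct arclength coordinate. Once these identifications are in place the argument collapses to the elementary boundary-value computation sketched here, and the resulting two families of eigenvalues feed directly into the remaining lemmas leading to Theorem~\ref{Spectrum}.
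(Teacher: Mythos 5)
Your proposal is correct and follows essentially the same route as the paper: the paper's proof simply exhibits the eigenfunctions $\cos(2(k+1/2)\pi x/(1-2X_0))$ on the two exterior segments (Dirichlet at the plate, Neumann at the outer endpoint) and $\sin(k\pi x/(2X_0))$ on the interior segment (Dirichlet at both plates) and reads off the eigenvalues, which is exactly your three-segment Sturm--Liouville decomposition with multiplicity $2$ from the two isometric exterior regions and $1$ from the interior. Your additional remarks on the decoupling of the domain and the rescaled cell lengths are just a more explicit version of what the paper leaves implicit.
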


\begin{proof}
We look for eigenfunctions of $B_{0}$ on $F_{0}$ in $\Dom(B_{0})$. The only functions that satisfy these restrictions are $\left\{ \cos(2(k+1/2)\pi x /(1-2X_{0})) \right\}$ on the two exterior regions and $\left\{\sin(k\pi x /(2X_{0})) \right\}$ in the interior region. We immediately obtain the eigenvalues.
\end{proof}

\begin{lemma}
\label{SpectrumF_{1}}
For $F_{1}$, 
\begin{eqnarray*}
\sigma(B_{1}|_{D_{1}^{\prime}})=&& \bigcup_{k=0}^\infty \left\{ [(k+1/2) \pi (N-(Z+1))/(1-2X_{0}) ]^{2} \right\} \\ &\cup& \bigcup_{k=1}^\infty \left\{ [k \pi (N-(Z+1))/(1-2X_{0})]^{2} \right\}\\ &\cup&  \bigcup_{k=1}^\infty \left\{ [k \pi (Z+1)/(2X_{0})]^{2} \right\} 
\end{eqnarray*}
Eigenvalues in these three sets have the following respective multiplicities:
\begin{enumerate}
\item[1)]  $2$;
\item[2)] $N-Z-3$; 
\item[3)] $Z+1$. 
\end{enumerate}
\end{lemma}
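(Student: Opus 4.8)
The plan is to repeat, on the graph $F_1$, the cell-by-cell analysis used for $F_0$ in Lemma \ref{SpectrumF_{0}}, feeding in the counts of loops and V's established in the lemmas above. First I would describe $D_1'$: since $D_1'=D_0^{\perp}\cap D_1$ and $D_0$ consists of the functions pulled back from $F_0$, a function lies in $D_1'$ exactly when it belongs to $\Dom(B_1)$ and takes opposite values on the two parallel edges forming each loop or V of $F_1$ (these being the $N$ pairs of equal-length edges joining consecutive columns). The decisive consequence is that continuity at an interior column node, combined with this opposite-sign property for the edges meeting there, forces every $f\in D_1'$ to vanish at all column nodes of $F_1$. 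Hence the Dirichlet conditions at the conducting nodes are automatically satisfied, the Kirchhoff conditions at the remaining interior nodes hold because the antisymmetric outgoing derivatives cancel in pairs, and the eigenvalue problem for $B_1|_{D_1'}$ decouples into an orthogonal direct sum of one-dimensional problems, one for each loop and V.

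Next I would read off each piece's contribution using the preceding lemmas, which give $F_1$ exactly $2$ V's (both exterior), $Z+1$ loops in the interior region, and $N-Z-3$ loops in the exterior region, with interior edges of length $\frac{2X_0}{Z+1}$ and exterior edges of length $\frac{1-2X_0}{N-(Z+1)}$. On a loop the eigenfunction vanishes at both of its column nodes, so on each edge it is $\sin(k\pi t/\ell)$ with eigenvalue $(k\pi/\ell)^2$, $k\ge 1$, one value per loop; taking $\ell=\frac{2X_0}{Z+1}$ yields the third set with multiplicity $Z+1$, and taking $\ell=\frac{1-2X_0}{N-(Z+1)}$ yields the second set with multiplicity $N-Z-3$. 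On a V the eigenfunction still vanishes at the interior column node but satisfies the Neumann (degree-one) condition at the free end, so it is a cosine obeying $\cos(\mu\ell)=0$, with eigenvalue $\big((k+1/2)\pi/\ell\big)^2$, $k\ge 0$, and $\ell=\frac{1-2X_0}{N-(Z+1)}$; the two V's then contribute the first set with multiplicity $2$. In every case one extends the function with the opposite sign to the partner edge, which is compatible with all the vertex conditions.

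Finally I would verify exhaustiveness: the functions produced above are mutually orthogonal, are supported on distinct loops and V's, and on each such piece run through a complete orthonormal basis for its one-dimensional operator, so together they span $D_1'$; therefore $\sigma(B_1|_{D_1'})$ is precisely the union of the three displayed sets with the stated multiplicities. The step requiring the most care is the bookkeeping in the middle: confirming that the forced vanishing at a degree-four column behaves as a Dirichlet endpoint while the free end of a V carries a Neumann endpoint, and that the interior/exterior labelling and the edge lengths of $F_1$ are exactly those produced by the earlier counting lemmas (in particular that symmetry places the plates at mirror-image columns, leaving $Z+1$ interior loops and $2$ exterior V's together with $N-Z-3$ exterior loops). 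The one genuinely analytic point, exhaustiveness, is then immediate from this orthogonal decomposition and the spectral theory of $-d^2/dx^2$ on an interval.
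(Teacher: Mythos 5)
Your proposal is correct and follows essentially the same route as the paper's proof: orthogonality to the pullbacks from $F_0$ forces antisymmetry between the two glued copies, which decouples the problem into the $2$ V's (Neumann--Dirichlet, cosines with half-integer frequencies) and the $Z+1$ interior and $N-Z-3$ exterior loops (Dirichlet--Dirichlet, sines), with the edge lengths $\frac{2X_0}{Z+1}$ and $\frac{1-2X_0}{N-(Z+1)}$ from the earlier counting and scaling lemmas. You simply spell out a few points the paper leaves implicit (vanishing at glued nodes, automatic Kirchhoff cancellation, exhaustiveness), which is fine.
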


\begin{proof}
The only eigenfunctions on $F_{1}$ that are orthogonal to the pullback of functions in $D_{0}^{\prime}$ are given by functions that take opposite values in the two copies of $F_{0}$ that are glued together. For instance, the eigenfunctions on the V's must be those whose values on the top branch equal the negative values for the function on the bottom branch so that the function defined on the top branch determines the value on the lower branch. Looking for eigenfunctions with Neumann conditions and Dirichlet boundary conditions at opposite ends of an interval of length $(1-2X_{0})/(N-(Z+1))$ gives $\left\{\cos((k+1/2)\pi x (N-(Z+1))/(1-2X_{0})) \right\}$. Similarly, the only eigenfunctions for loops on $F_{1}$ that meet the orthogonality restrictions are those whose values on the top branch equal the negative values for the function on the bottom branch. Looking for eigenfunctions of $B_{1}$ that have Dirichlet boundary conditions at the ends of an interval of length $(1-2X_{0})/(N-(Z+1))$ and $(2X_{0})/ (Z+1)$ gives $\left\{\sin((k \pi x (N-(Z+1))/(1-2X_{0})) \right\}$ and $\left\{\sin(k \pi x (Z+1)/(2X_{0})) \right\}$, respectively.   Since $F_{1}$ has two V's, $N-Z-3$ loops in the exterior region, and $Z+1$ loops in the interior, we have the corresponding multiplicities.
\end{proof}  
 
\begin{lemma}
\label{SpectrumF_{n}}
 For $F_{n} : n \geq 2$, 
 \begin{eqnarray*}
 \sigma(B_{n}|_{D_{n}^{\prime}}) = && \bigcup_{k=0}^\infty \left\{ [I_{n}(k+1/2) \pi (1-(Z+1)/N)/(1-2X_{0}) ]^{2} \right\}\\  &\cup&  \bigcup_{k=1}^\infty \left\{ [I_{n} k \pi (1-(Z+1)/N)/(1-2X_{0})]^{2} \right\} \\ &\cup&  \bigcup_{k=1}^\infty \left\{ [I_{n} k \pi (1-(Z+1)/N)/(2(1-2X_{0}))]^{2} \right\}  \\ &\cup& \bigcup_{k=1}^\infty \left\{ [I_{n}k \pi (Z+1)/(2NX_{0})]^{2} \right\} \\ &\cup&  \bigcup_{k=1}^\infty \left\{ [I_{n}k \pi (Z+1)/(4NX_{0})]^{2} \right\}
 \end{eqnarray*}
Eigenvalues in these five sets have the following respective multiplicities:
\begin{enumerate}
\item[1)] $2^{n}$; 
\item[2)] $(1-\frac{Z+1}{N})2^{n-1} I_{n-1}(N-2)+2^{n-1}(1-\frac{Z+1}{N})I_{n-1}$; 
\item[3)] $2^{n-2}[(1-\frac{Z+1}{N})I_{n-1}-1]-2^{n-2}$; 
\item[4)]\ $\frac{Z+1}{N}2^{n-1}I_{n-1} (N-2)+\frac{Z+1}{N} 2^{n-1} I_{n-1}+2^{n-1}$; 
\item[5)] $\frac{Z+1}{N} 2^{n-2}I_{n-1}-2^{n-2}$. 
\end{enumerate}
\end{lemma}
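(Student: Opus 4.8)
The plan is to invoke Theorem~\ref{SpectrumOrtho}, which reduces the determination of $\sigma(\Delta)$ to computing $\sigma(B_n|_{D_n'})$ for each $n$ in isolation. Lemmas~\ref{SpectrumF_{0}} and~\ref{SpectrumF_{1}} settle $n=0$ and $n=1$, so the task is the case $n\ge 2$, which I would approach exactly as the free case is handled in \cite{ST08} and as Lemma~\ref{SpectrumF_{1}} is proved here. First I would describe $D_n'$ concretely. The graph $F_n$ is two copies of $F_{n-1}$ glued along the level-$n$ vertices, and pulling back $\Dom(B_{n-1})$ into $\Dom(B_n)$ identifies it with the functions symmetric under the involution $\tau_n$ interchanging the two copies; hence every $\tau_n$-antisymmetric function already lies in $D_{n-1}'^{\perp}$, and one checks as in \cite{ST08} that $D_n'$ is precisely the $\tau_n$-antisymmetric part of $D_n$. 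Such a function must vanish at every level-$n$ vertex (a fixed point of $\tau_n$), and this---together with continuity and the Kirchhoff/Dirichlet conditions at the doubled level-$(n-1)$ vertices---decouples the eigenproblem for $B_n|_{D_n'}$ into independent eigenproblems on the V's, loops, and crosses composing $F_n$, each shape supporting exactly the eigenfunctions odd under the swap of its two halves.

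Next I would solve these one-dimensional problems shape by shape, using the interior and exterior cell lengths $\tfrac{2NX_0}{Z+1}I_n^{-1}$ and $\tfrac{1-2X_0}{1-(Z+1)/N}I_n^{-1}$ established above. Every V is exterior and reduces to a single cell with a Neumann condition at the free tip and Dirichlet at the base, producing the eigenvalues $[I_n(k+\tfrac12)\pi(1-\tfrac{Z+1}{N})/(1-2X_0)]^2$; with $2^n$ V's this is set~1). A loop has its two parallel edges carrying opposite values, so it reduces to one Dirichlet--Dirichlet cell, contributing to set~2) if exterior and set~4) if interior. On a full cross the odd-under-swap reduction leaves a single ``$X$'': four cell-length arms meeting with a Kirchhoff condition at a degree-four center and Dirichlet at the four corners; a short computation shows its spectrum is the union of the Dirichlet spectrum of an interval of length two cells (the cross read across) with two copies of the Dirichlet spectrum of a one-cell interval---i.e.\ one \emph{transverse} family (set~3) for exterior, set~5) for interior) and two \emph{loop-type} families (set~2) or~4)). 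Finally, each of the $2^{n-1}$ crosses centered on a conducting node is special: its eigenfunctions vanish at that (Dirichlet) center, so the four arms fully decouple into one-cell Dirichlet intervals, giving two exterior-loop-type and two interior-loop-type eigenvalues and no transverse ones.

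It then remains to tally multiplicities using the counting lemmas of this section: the numbers of interior and exterior loops and of interior and exterior crosses in $F_n$, the fact that $2^{n-1}$ half-crosses lie along the plates in each region, and the convention that two disjoint half-crosses count as one whole cross. Assigning $1$ to set~1) per V; $1$ to set~2) (resp.\ set~4)) per exterior (resp.\ interior) loop; $1$ to set~3) (resp.\ set~5)) and $2$ to set~2) (resp.\ set~4)) per genuinely full exterior (resp.\ interior) cross; and $2$ to set~2) and $2$ to set~4) per plate-centered cross, then substituting the counts, reproduces the stated multiplicities; the ``extra'' $-2^{n-2}$ and $+2^{n-1}$ terms in items 2)--5) are exactly the correction coming from the half-crosses sitting on the plates, which are counted in the cross totals as whole-cross-equivalents but in fact feed the loop-type families.

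The main obstacle is this last bookkeeping step, and inside it two points require care: correctly resolving the internal spectrum of a full cross---in particular that the $[k\pi/\ell]^2$ eigenspace of the reduced ``$X$'' is three-dimensional, splitting as one transverse mode plus two loop-type modes---and correctly handling the plate-centered crosses, whose Dirichlet center changes which families they contribute to. One must also remember that the five eigenvalue families overlap (every value of set~2) recurs in set~3), and likewise set~4) in set~5)), so the count must be organized per actual eigenvalue, or, equivalently, one adopts the convention already in force in~(\ref{eq: Lspectrum}) that multiplicities of coincident values are summed. By contrast, establishing the $\tau_n$-antisymmetric description of $D_n'$ and the ensuing decoupling---the conceptual heart of the argument---is routine given \cite{ST08} and the proof of Lemma~\ref{SpectrumF_{1}}.
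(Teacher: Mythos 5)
Your proposal is correct and follows essentially the same route as the paper: reduce via Theorem \ref{SpectrumOrtho} to the antisymmetric part of $D_n$, decouple the problem shape-by-shape (V's, loops, full crosses, plate-centered crosses) into one-dimensional Dirichlet/Neumann problems with the scaled cell lengths, and tally multiplicities from the counting lemmas with the half-cross correction. Your explicit Kirchhoff-center analysis of the reduced X is just a restatement of the paper's identification of a cross with the $F_1$ graph of a $j=2$ Laakso space, and your bookkeeping reproduces the stated multiplicities.
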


\begin{proof}
\noindent
\begin{enumerate}
\item \emph{V's}.  In keeping with the procedure introduced in \cite{RS09} and \cite{ST08}, we look for eigenfunctions of $B_{n}$ on $F_{n}$ in $\Dom(B_{n})$ which are orthogonal to the pullback of functions in $\Dom(B_{i})$ $\forall < n$. Consider the V's in $F_{n}$. The  orthogonality condition requires---as before---that the value of the eigenfunction on the top branch of a V equal the negative of the value on the lower branch so that the function on one branch, which vanishes at the junction of the V, fully determines the eigenfunction in the spectrum. By the definition of $\Dom(B_{n})$, the function must have Neumann boundary conditions at the two degree one nodes and Dirichlet boundary conditions at the degree two node. Therefore, a basis for the eigenfunctions of $B_{n}$ on each V of $F_{n}$ in $\Dom(B_{n})$ and orthogonal to the pullback of functions in $\Dom(B_{i}) ~\forall i<n$ is given by $ \left\{\cos(I_{n} (k+1/2) \pi x (1-(Z+1)/N)/(1-2X_{0})) \right\}$ and yields the set of eigenvalues $\cup_{k=0}^\infty \left\{ [I_{n}(k+1/2) \pi \left(1-\frac{Z+1}{N} \right)/(1-2X_{0}) ]^{2} \right\}$. Clearly, the total multiplicity of each eigenvalue for one V is $2^{n}$ since $2^{n}$ V's live in the exterior region of $F_{n}$.\\  
\item \emph{Exterior Loops}. Eigenfunctions must take opposite values on the two branches of the loop, and we must therefore only consider eigenfunctions on an interval of length $[I_{n} (1-\frac{Z+1}{N})/(1-2X_{0})]^{-1}$ with Dirichlet boundary conditions at the endpoints. We see immediately that the set of functions is $\left\{\sin(I_{n} k \pi x (1-\frac{Z+1}{N})/ (1-2X_{0})) \right\}$ which in turn yields a set of eigenvalues $\left\{ [I_{n} k \pi (1-(Z+1)/N)/(1-2X_{0})]^{2} \right\}$ each with multiplicity one. Since each loop will add the same set of eigenvalues, we use a previous result to see that each eigenvalue in the set  $\left\{ [I_{n} k \pi \left( 1-\frac{Z+1}{N} \right)/(1-2X_{0})]^{2} \right\}$ will have total multiplicity $\frac{Z+1}{N} 2^{n-1}$ $I_{n-1}(N-2)$.\\ 
\item \emph{Exterior Crosses}. The crosses are more complicated because $2^{n-1}$ of them are split along the conducting plates and so receive slightly different boundary conditions. For each cross not centered along a conducting plate, we treat an intact cross as two overlapping X's joined together at the four corner nodes. By the orthogonality condition, any permissible eigenfunction takes opposite values on the two X's of the cross so that once a function is applied to one X, the values of that function on the other cross are fully determined. From this, it is clear that the function must vanish at the four corners of the X. We can view this shape as an upper and lower V, joined at the central vertex, which is the $F_{1}$ quantum graph approximation for a $j=2$ Laakso space. Here, 
\begin{equation*}
\sigma(B_{0}|_{D_{0}^{\prime}})= \left\{ \left( \frac{k \pi I_{n} (1-\frac{Z+1}{N})}{2(1-2X_{0})} \right) ^{2} \right\}
\end{equation*}
where $\left\{ [I_{n} (1-(Z+1)/N)/[2(1-(2X_{0})]]^{-1} \right\} $ is the metric length of the upper branch of the X. We also know that 
\begin{equation*}
\sigma(B_{1}|_{D_{1}^{\prime}})= \left\{  \left( \frac{ k \pi I_{n} (1-\frac{Z+1}{N})}{(1-2X_{0})} \right) ^{2} \right\}
\end{equation*}
for each of the left and right side V's. Since this argument works only for those crosses which do not lie along the conducting plates, we have total multiplicities of $2^{n-2}[(1-(Z+1)/N)I_{n-1}-1]-2^{n-2}$ for $\left\{ \left(\frac{k \pi I_{n} (1-(Z+1)/N)}{2(1-2X_{0})} \right)^{2} \right\}$ and $2^{n-1}[(1-(Z+1)/N)I_{n-1}-1]-2^{n-1}$ for  $\left\{ \left( \frac{k \pi I_{n} (1-(Z+1)/N)}{(1-2X_{0})}\right)^{2} \right\}$. \\ 
\item \emph{Exterior Half-Crosses}.  For each cross centered on the conducting plates, take the half-cross lying in the exterior region, on which all permissible eigenfunctions take opposite values on the two halves of the half-cross. Thus, the function on the upper half of the half-cross fully determines the eigenfunction. Since we are imposing Dirichlet boundary conditions along the conductors, it is clear that the set of eigenfunctions is given by $\left\{ [k \pi I_{n} (1-\frac{Z+1}{N})/(1-2X_{0})]^{2} \right\}$, with multiplicity two to account for the fact that two separate intervals of length $\left\{ [I_{n} (1-\frac{Z+1}{N})/(1-2X_{0})]^{-1} \right\}$ form every half of the half-cross. Since there are exactly $2^{n-1}$ such half-crosses in the exterior region of $F_{n}$, we calculate a total multiplicity of $2^{n}$ for each of the eigenvalues listed in $\left\{ \left[ k \pi I_{n} \frac{1-\frac{Z+1}{N}}{(1-2X_{0})} \right]^{2} \right\}$. \\
\item \emph{Interior Region}. This case is handled by similar means.  
\end{enumerate}
\end{proof}
The proof of Theorem \ref{Spectrum} now follows from the results of Lemma \ref{SpectrumF_{0}}, Lemma \ref{SpectrumF_{1}}, Lemma \ref{SpectrumF_{n}}, and Theorem \ref{SpectrumOrtho}. 

Before calculating the Casimir Force in Subsection \ref{sect:CasimirForce}, we provide the following proviso.
\subsection{Proviso}
\label{Proviso}
Analytic regularization is a method employed by modern physics and, in particular, Quantum Field Theory to grapple with divergent sums which often arise in calculations of vacuum energy. More specifically, a divergent sum is interpreted to be the analytic continuation of a function that converges only on a subregion of the complex plane. In practice, we take the formal sum $\sum_{n=0}^{\infty} z^{n}$, realize it converges everywhere inside the unit circle to the value $\frac{1}{1-z}$, and analytically continue the sum to a meromorphic function on the complex plane. So, we pair $\sum_{n=0}^\infty 2^{n}$ with $\frac{1}{1-2}=-1$. Of course, this last calculation becomes a formal equality if we are working in the $s$-adic number system where $\sum_{n=0}^{N} s^{n}=\frac{s^{N+1}-1}{s-1}$ and $\lim_{N \to \infty}~ \sum_{n=0}^{N} s^{n}=\lim_{N \to \infty}~ \frac{s^{N+1}-1}{s-1}=\frac{-1}{s-1}$. Moreover, we interpret $\sum_{n=1}^\infty n $ to be the value of the analytic continuation of the Riemann zeta function at $s=-1$. Regularizing $\sum_{n=1}^\infty n$  yields the value $\zeta(-1)=-\frac{1}{12}$. The expression $\sum_{n=1}^\infty (n+1/2)$ is recognized to be the analytic continuation of the Hurwitz zeta function and evaluated along similar lines. 

While calculations of the Casimir Effect using zeta regularization have been experimentally verified in \cite{Bressi2002} and \cite{Sparnaay1957}, the results do not imply that that nature always chooses analytic regularization. Rather, we employ the method carefully in the interest of obtaining finite answers to questions which seem closely related to Casimir's original setup and in the hope that analogous reasoning applies here in the case of Laakso spaces. 
\subsection{Formulae for the Casimir Energy and Force on a General Laakso Space}
\label{sect:CasimirForce}
\begin{definition}
\label{SpectrumDef}
Let $\sigma(A) =\left\{ \lambda_{n} \right\}$ be the spectrum of the differential operator A with respective multiplicities $\left\{ g_{n} \right\} $. Then the spectral zeta function $\zeta(s)=~\sum_{n=1}^\infty \frac{g_{n}}{\lambda_{n}^{s}}$. 
\end{definition}
\begin{corollary}
\label{SpectralFunction}
Given a Laakso space configuration specified by $ j_{n}=N ~\forall n, X_{0} \in \left( 0, 1/2 \right), and~Z \in \mathbb{N}\cup{\left\{0 \right\} }$, the spectral zeta function of the self-adjoint operator $\Delta$ is
\begin{align*}
\zeta_{N,X_{0}, Z}(s) =&~  \sum _{k=0}^\infty \frac{2}{[(2k+1) \pi/(1-2X_{0})]^{2s}} +\sum_{k=1}^\infty \frac{1}{[k \pi/(2X_{0})]^{2s}}\\& +\sum _{k=1}^\infty \frac{(N-Z-3)}{[Nk \pi\frac{(1-(Z+1)/N)}{1-2X_{0}}]^{2s}}+\sum _{k=1}^\infty \frac{Z+1}{[k \pi(Z+1) / (2X_{0})]^{2s}} \\ 
& + \sum_{n=1}^\infty \sum_{k=0}^\infty \frac{2^{n}}{[I_{n}(k+1/2) \pi (1-\frac{Z+1}{N})/(1-2X_{0})]^{2s}} \\ & + \sum_{n=2}^\infty \sum_{k=1}^\infty \frac{(1-\frac{Z+1}{N})2^{n-1}I_{n-1}(N-2)+2^{n-1}(1-(Z+1)/N) I_{n-1}}{[I_{n}k \pi \frac{(1-(Z+1)/N)}{(1-2X_{0})}]^{2s}} \\& + \sum_{n=2}^\infty \sum_{k=1}^\infty \frac{ 2^{n-2}[(1-(Z+1)/N)I_{n-1}-1]-2^{n-2}}{ [ I_{n}k \pi(1-(Z+1)/N)/[2(1-2X_{0})]]^{2s}} \\  & +\sum_{n=2}^\infty \sum_{k=1}^\infty \frac{(Z+1)/N[2^{n-1}I_{n-1}(N-2)+2^{n-1}I_{n-1}]+2^{n-1}}{[k \pi I_{n}(Z+1)/(2NX_{0})]^{2s}} \\  & + \sum_{n=2}^\infty \sum_{k=1}^\infty  \frac{(Z+1)/N[2^{n-2}I_{n-1}]-2^{n-2}}{[I_{n}k \pi / (4NX_{0})]^{2s}}.
\end{align*}
 \end{corollary}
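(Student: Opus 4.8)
The plan is to derive the corollary directly from Theorem \ref{Spectrum} and Definition \ref{SpectrumDef}. Theorem \ref{Spectrum} already exhibits $\sigma(\Delta)$ as an explicit union of ten families of eigenvalues with their multiplicities (it applies because $\Delta$ is, by hypothesis, the minimal self-adjoint extension of the compatible sequence $\{B_n\}$, so Theorem \ref{SpectrumOrtho} together with Lemmas \ref{SpectrumF_{0}}--\ref{SpectrumF_{n}} pins down the spectrum), and the spectral zeta function is simply the Dirichlet-type series $\sum_n g_n\lambda_n^{-s}$ assembled from those data. So the proof is a bookkeeping exercise: substitute each family and multiplicity into Definition \ref{SpectrumDef} and collect terms. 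By additivity of the series it is harmless if eigenvalues from distinct families coincide, so no grouping of repeated eigenvalues is needed; note also that the Dirichlet condition at the conducting nodes removes the would-be constant mode, so $0\notin\sigma(\Delta)$ and the series has no problematic term at the bottom of the spectrum.

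The first thing to pin down is the half-plane on which this manipulation is legitimate. For a $j=N$ space one has $I_n=N^n$, so the level-$n$ eigenvalues are of size $\asymp N^{2n}k^2$ while the level-$n$ multiplicities are $\asymp (2N)^n$ up to constants depending on $N$ and $Z$; hence $\sum_n\sum_k g\,\lambda^{-s}$ is dominated by $\bigl(\sum_k k^{-2s}\bigr)\sum_n(2N^{1-2s})^n$, which converges absolutely precisely when $\mathrm{Re}(s)>\tfrac12$ and $2N^{1-2\,\mathrm{Re}(s)}<1$, i.e. when $\mathrm{Re}(s)>\tfrac{\log(2N)}{2\log N}=\tfrac{Q_L}{2}$. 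On that half-plane both sides of the claimed identity are absolutely convergent and the equality is just a rearrangement of a single convergent series; it then extends to the meromorphic continuation of either side by uniqueness.

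It remains to match the ten eigenvalue families of Theorem \ref{Spectrum} to the nine sums in the statement. The interior and exterior $F_0$ modes give the first two sums once one writes $(k+\tfrac12)\big/\tfrac{1-2X_0}{2}=\tfrac{2k+1}{1-2X_0}$; the $N-Z-3$ exterior and $Z+1$ interior loops of $F_1$ give the third and fourth sums via $N\bigl(1-\tfrac{Z+1}{N}\bigr)=N-(Z+1)$. The exterior $V$ modes of $F_1$ have multiplicity $2=2^1$ and eigenvalue $\bigl[(k+\tfrac12)\pi(N-(Z+1))/(1-2X_0)\bigr]^2$, which is exactly the $n=1$ instance ($I_1=N$) of the generic $F_n$ $V$-eigenvalue $\bigl[I_n(k+\tfrac12)\pi(1-\tfrac{Z+1}{N})/(1-2X_0)\bigr]^2$ with multiplicity $2^n$; consequently these two families merge into the single sum indexed by $n\ge 1$. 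Finally, the four remaining $F_n$ families for $n\ge 2$---exterior loops together with exterior half-crosses, exterior crosses, interior loops together with interior half-crosses, and interior crosses---reproduce the last four double sums, the bracketed quantities being read off from the cell-length and metric-diameter formulas already established for $F_n$.

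There is no genuine obstacle here---this is exactly why the statement is phrased as a corollary. The only points that demand attention are the routine algebraic simplifications above, the observation that the $F_1$ $V$-level is the $n=1$ case of the generic $F_n$ $V$-formula (which is why the first of the $n$-indexed sums begins at $n=1$ rather than $n=2$), and the precise specification of the half-plane $\mathrm{Re}(s)>Q_L/2$ of absolute convergence on which the identity is asserted.
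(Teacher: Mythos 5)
Your proposal is correct and follows essentially the same route as the paper, whose proof is the one-line observation that the corollary follows by applying Definition \ref{SpectrumDef} to the spectrum and multiplicities in Theorem \ref{Spectrum}. Your extra bookkeeping (merging the $F_1$ V-family into the $n\ge 1$ sum via $I_1=N$, and identifying the half-plane of absolute convergence) is a sound elaboration of details the paper leaves implicit, not a different argument.
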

 \begin{proof}
 This follows immediately from an application of Definition \ref{SpectrumDef} to the results of Theorem \ref{Spectrum}.
 \end{proof}
 
 \begin{proposition}
 Let $\zeta_{N,X_{0}, Z}(s)$ be the spectral zeta function. Then $\frac{\hbar}{2}\zeta_{N,X_{0},Z}(-\frac{1}{2})$ gives the Casimir energy $E_{C}$ of the conducting plates in the Laakso space. 
 \end{proposition}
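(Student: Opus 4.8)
The plan is to match the (regularized) zero-point energy of the quantized electromagnetic field on the plated Laakso space with $\frac{\hbar}{2}\zeta_{N,X_{0},Z}(-\tfrac12)$; the argument is essentially the combination of the physical setup of Section~\ref{sect: Casimir} with Definition~\ref{SpectrumDef}, together with a check that the resulting value is well defined.

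First I would recall Equation~\ref{Zero-Point}: after the Bogoliubov-type identification of the field Hamiltonian with a system of independent harmonic oscillators, the vacuum energy is $E_{0}=\frac{\hbar}{2}\sum_{n}\omega_{n}$, where the sum runs over $\{\omega_{n}\}=\{\sqrt{\lambda}\mid\lambda\in\sigma(\Delta)\}$ counted with multiplicity, and $\Delta$ is the minimal self-adjoint extension of $\{B_{n}\}$ from Definition~\ref{Bdef} — the Laplacian with Dirichlet conditions at the conducting nodes — whose spectrum and multiplicities are given by Theorem~\ref{Spectrum}. Writing $\sigma(\Delta)=\{\lambda_{n}\}$ with multiplicities $\{g_{n}\}$ and substituting $\omega_{n}=\lambda_{n}^{1/2}$ gives
\begin{equation*}
E_{0}=\frac{\hbar}{2}\sum_{n}g_{n}\,\lambda_{n}^{1/2}=\frac{\hbar}{2}\sum_{n}\frac{g_{n}}{\lambda_{n}^{-1/2}},
\end{equation*}
which is precisely $\frac{\hbar}{2}\zeta_{N,X_{0},Z}(s)$ evaluated at $s=-\tfrac12$ in the sense of Definition~\ref{SpectrumDef}.

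This series diverges, so the content of the statement is the interpretation of $\zeta_{N,X_{0},Z}(-\tfrac12)$, and here I would invoke the Proviso of Subsection~\ref{Proviso}. The explicit formula for $\zeta_{N,X_{0},Z}(s)$ in Corollary~\ref{SpectralFunction} is a finite combination of one-parameter Dirichlet series in $k$ — each, after pulling out the $k$-independent prefactor, a constant multiple of a Riemann or Hurwitz zeta value — weighted by sums over the construction level $n$ of the form $\sum_{n}2^{n}I_{n}^{-2s}$ and $\sum_{n}2^{n}I_{n-1}I_{n}^{-2s}$; since $I_{n}=N^{n}$ in the $j=N$ case, these are geometric in $n$ and converge absolutely for $\Re(s)$ beyond a finite abscissa, where also the $k$-sums converge. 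Thus $\zeta_{N,X_{0},Z}$ is holomorphic on a right half-plane and extends, term by term, to a meromorphic function whose only poles come from the $\zeta(2s)$-type factors and the geometric factors $\bigl(1-2N^{-2s}\bigr)^{-1}$, none of which is singular at $s=-\tfrac12$. Hence $\zeta_{N,X_{0},Z}(-\tfrac12)$ is a well-defined finite number, and the Casimir energy $E_{C}$ is by definition the analytically regularized vacuum energy $E_{0}$, namely $\frac{\hbar}{2}\zeta_{N,X_{0},Z}(-\tfrac12)$.

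The only point needing care — rather than a genuine obstacle — is the bookkeeping that verifies each geometric-type factor $\sum_{n}2^{n}N^{-2ns}$ (and its variants carrying an extra $N^{n-1}$) converges on a right half-plane and continues to a function regular near $s=-\tfrac12$, so that the term-by-term continuation is legitimate and unambiguous; this is exactly the computation carried out in detail in Section~\ref{sect: Zeta}, and it is what makes the value $\frac{\hbar}{2}\zeta_{N,X_{0},Z}(-\tfrac12)$, hence $E_{C}$, meaningful.
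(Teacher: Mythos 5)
Your proposal is correct and follows the same route as the paper: the paper's proof is exactly the identification $E_{0}=\frac{\hbar}{2}\sum_{n}\omega_{n}$ with $\{\omega_{n}\}=\{\sqrt{\lambda_{n}}\mid\lambda_{n}\in\sigma(\Delta)\}$, which is your first paragraph. Your additional verification that the analytically continued $\zeta_{N,X_{0},Z}$ is regular at $s=-\tfrac12$ is a welcome elaboration of what the paper delegates to the Proviso of Subsection~\ref{Proviso} and Section~\ref{sect: Zeta}, but it is not a different argument.
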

 
 \begin{proof}
 This follows from the fact that $E_{0}=\frac{\hbar}{2} \sum_{n} \omega_{n}$ and $\left\{ \omega_{n} \right\}=\left\{ \sqrt{ \lambda_{n}} ~|~ \lambda_{n} \in \sigma(\Delta) \right\}$. 
 \end{proof}
 
\begin{proposition}
\label{ForceProp}
Let two conducting metal plates be situated a distance $X_{0}$ symmetrically about x=$\frac{1}{2}$ in a Laakso space with $j_{n}=N~\forall n$. Furthermore, let Z nodes lie between the conductors in the $F_{1}$ quantum graph approximation. Then the Casimir Force $F_{C}$ experienced by each of the plates is given by $\frac{\hbar}{2} \frac{d}{dx} \zeta_{N,x,Z}(-\frac{1}{2}) |_{x=X_{0}}$ where a positive sign indicates an attractive force. 
 \end{proposition}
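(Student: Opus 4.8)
The plan is to obtain this from the preceding proposition together with the principle of virtual work; the only genuine technical point is that the analytically regularized quantity $\zeta_{N,x,Z}(-\tfrac12)$ depends differentiably on the displacement parameter $x=X_0$, so that the derivative in the statement makes sense.

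First I would recall from the preceding proposition that the analytically regularized zero–point energy of the electromagnetic field confined by the two conductors is $E_C(X_0)=\tfrac{\hbar}{2}\,\zeta_{N,X_0,Z}(-\tfrac12)$, with the value at $s=-\tfrac12$ understood in the sense of Subsection~\ref{Proviso}: Riemann and Hurwitz zeta values for the $k$-sums, and the $s$-adic/geometric regularization $\sum_{n}2^{n}\leftrightarrow -1$ for the $n$-sums. As $X_0$ varies over $(0,\tfrac12)$ the configurations form a one–parameter family, the interior and exterior cells of the Laakso space stretching or compressing as prescribed by the cell–length lemma, and this is precisely the motion along which Casimir's virtual–work argument is run: the generalized force conjugate to the coordinate $X_0$ is, up to the sign convention adopted in the statement, the derivative of the vacuum energy along the family, $\tfrac{dE_C}{dX_0}$.

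Second I would justify that $x\mapsto\zeta_{N,x,Z}(-\tfrac12)$ is differentiable and that its derivative is obtained by differentiating the closed form of Corollary~\ref{SpectralFunction} term by term. The key is structural: in every summand of Corollary~\ref{SpectralFunction} the entire dependence on $X_0$ sits in a prefactor of the form $(2X_0)^{2s}$, $(1-2X_0)^{2s}$, $(2NX_0)^{2s}$ or $(4NX_0)^{2s}$, so each doubly–indexed sum factors as such a power times a sum in the indices $(n,k)$ that — after the regularization of Subsection~\ref{Proviso} — is a fixed meromorphic function of $s$ alone (a finite combination of $\zeta$, of the Hurwitz zeta function, and of the regularized geometric series in $2^{n}$), independent of $X_0$. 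Since these prefactors are entire in $s$ and locally uniformly bounded in $(s,X_0)$, the continued function $\zeta_{N,X_0,Z}(s)$ is jointly analytic in $(s,X_0)$ away from its poles in $s$; hence one may interchange $\partial/\partial X_0$ with the analytic continuation in $s$, and specializing to $s=-\tfrac12$ shows that $\tfrac{d}{dx}\zeta_{N,x,Z}(-\tfrac12)\big|_{x=X_0}$ exists and equals the termwise derivative. Combining with the first step gives $F_C=\tfrac{\hbar}{2}\,\tfrac{d}{dx}\zeta_{N,x,Z}(-\tfrac12)\big|_{x=X_0}$.

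To pin down the sign I would inspect the dominant contribution as the plates approach, namely the $F_0$ and $F_1$ terms of Corollary~\ref{SpectralFunction}: their regularized $k$- and $(k+\tfrac12)$-sums contribute Riemann/Hurwitz values at $-1$, and at $s=-\tfrac12$ their prefactors become negative powers of $X_0$ and of $1-2X_0$, from which one reads off that the regularized energy increases as the separation grows near contact — so the derivative above is positive exactly when the plates are drawn together, which is the sense of ``positive indicates attractive.'' I expect the main obstacle to be precisely the rigorous interchange of differentiation in $X_0$ with the analytic continuation in $s$ — i.e.\ verifying that the regularized value is a bona fide differentiable function of $X_0$ with the continued termwise derivative — rather than any computation; a secondary and purely conventional point is the factor relating the half–separation $X_0$ to the physical plate separation $2X_0$, together with whether the stated force is that on one plate or on the pair, which must simply be read off from the definition as given in the statement.
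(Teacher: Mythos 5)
Your argument is essentially the paper's own proof: identify the Casimir energy as $\frac{\hbar}{2}\zeta_{N,X_0,Z}(-\tfrac12)$ via the preceding proposition and take the force to be the (virtual-work) energy gradient conjugate to $X_0$, with the bilateral symmetry guaranteeing equal and opposite forces on the two plates; your careful justification that the regularized $\zeta_{N,x,Z}(-\tfrac12)$ is differentiable in $x$ and may be differentiated termwise is extra rigor the paper simply omits. One small note: the clause ``positive indicates attractive'' is a sign convention settled, as in the paper, by observing that a positive derivative means the energy increases as the plates separate (so moving together is energetically favorable); it need not, and should not, be inferred from which terms of Corollary~\ref{SpectralFunction} dominate near contact, so your final paragraph is unnecessary.
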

 
\begin{proof}
Because of the bilateral symmetry of the arrangement, the forces experienced by each of the plates must be equal in magnitude and opposite in direction.  The force experienced by a system is given by the negative energy gradient, so our expression for $F_{C}$ is correct up to sign. Lastly, if $\frac{d}{dx} \zeta_{N,x,Z}(-\frac{1}{2})|_{x=X_{0}}$ is positive, energy increases as the plates move apart so that it is energetically favorable for the plates to move closer together, which means the force is attractive as claimed.
\end{proof}

\begin{proposition}
The generalized Casimir Force experienced by two uncharged conducting metal plates is 
\begin{align*}
F_{C}&= \frac{2 \hbar \pi (N-(Z+1))}{24(1-2N)(1-2X_{0})^{2}} - \frac{\hbar (N-(Z+3))(N-(Z+1))}{12(1-2X_{0})^{2}} \\&- \frac{ 2 \hbar \pi N^{3} (N-2)}{12 (1-2N^{2})}\left\{ \frac{1-\frac{Z+1}{N}}{1-2X_{0}} \right\}^{2}
-\frac{5 \hbar \pi (1-\frac{Z+1}{N})}{24 (1-2X_{0})^{2}} \left\{ \frac{N^{2}(N-(Z+1)}{1-2N^{2}}-\frac{N^{2}}{1-2N} \right\} \\&+ \frac{ \hbar \pi (Z+1)^{2}}{48 X_{0}^{2}}
+ \frac{ \hbar \pi N (Z+1)^{2} (N-2)}{24 X_{0}^{2} (1-2N^{2})} + \frac{5 \hbar \pi N (Z+1)^{2}}{96 (1-2N^{2}) X_{0}^{2}} + \frac{ \hbar \pi}{6 (1-2X_{0})^{2}}\\& + \frac{ \hbar \pi}{48 X_{0}^{2}} - \frac{\hbar \pi N^{2} (1-\frac{Z+1}{N})}{24 (1-2N) (1-2X_{0})^{2}}+\frac{ \hbar \pi N (Z+1)}{96 X_{0}^{2}(1-2N)}-\frac{\hbar \pi N^{2} (1-\frac{Z+1}{N})}{12 (1-2X_{0})^{2}(1-2N)}\\& + \frac{\hbar \pi (Z+1) N}{48 X_{0}^{2} (1-2N)}.
\end{align*}
\end{proposition}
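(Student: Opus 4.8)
The plan is to apply Proposition \ref{ForceProp} directly: the force is $F_C=\frac{\hbar}{2}\,\frac{d}{dx}\zeta_{N,x,Z}(-\tfrac12)\big|_{x=X_0}$, and Corollary \ref{SpectralFunction} already supplies $\zeta_{N,x,Z}(s)$ as an explicit sum of ten double series indexed by the spectral families of Theorem \ref{Spectrum}. So everything reduces to three mechanical steps: (i) evaluate each of these series in closed form as a meromorphic function of $s$, with the $x$-dependence isolated in elementary factors; (ii) differentiate in $x$; and (iii) specialize to $s=-\tfrac12$ and set $x=X_0$.

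For step (i), each inner sum over $k$, once the $k$-independent constants are pulled out, is one of $\sum_{k\ge1}k^{-2s}=\zeta(2s)$, $\sum_{k\ge0}(k+\tfrac12)^{-2s}=\zeta(2s,\tfrac12)=(2^{2s}-1)\zeta(2s)$, or $\sum_{k\ge0}(2k+1)^{-2s}=(1-2^{-2s})\zeta(2s)$. In the eigenvalue denominators the variable $x$ enters only through the elementary factors $(1-2x)^{-1}$ and $(2x)^{-1}$ raised to the power $-2s$, i.e. as $(1-2x)^{2s}$ and $(2x)^{2s}$, and these factors involve neither $k$ nor $n$. Each outer sum over $n$ (present in the six $F_n$-families, $n\ge1$ or $n\ge2$) is geometric: writing $I_n=N^{n}$ and collecting the powers of $2$ and $N$ carried by the multiplicities in Theorem \ref{Spectrum}, the common ratio is $2N^{-2s}$ or the shifted variant $2N^{1-2s}$, so for $\Re s$ large the series converges absolutely to a rational function of the ratio $r$ of the form $\tfrac{r^{a}}{1-r}$, with $a\in\{1,2\}$ dictated by the lower summation index. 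Since the whole of $\zeta_{N,x,Z}(s)$ thus converges absolutely for $\Re s$ large, it is a genuine meromorphic function there, and we continue it to $s=-\tfrac12$; this is exactly the analytic regularization described in Subsection \ref{Proviso}, and there is no ambiguity about the order in which continuations are taken. Step (ii) is then immediate, since only the factors $(1-2x)^{2s}$ and $(2x)^{2s}$ depend on $x$.

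For step (iii), at $s=-\tfrac12$ one has $\zeta(2s)=\zeta(-1)=-\tfrac1{12}$ and $\zeta(2s,\tfrac12)=\zeta(-1,\tfrac12)=\tfrac1{24}$ (equivalently, $\sum_{k\ge1}k$ and $\sum_{k\ge0}(k+\tfrac12)$ in the regularized sense), while the two geometric ratios become $2N$ and $2N^{2}$, so the $n$-sums turn into rational expressions in $\tfrac1{1-2N}$ and $\tfrac1{1-2N^{2}}$ — precisely the denominators appearing in the claimed identity. The $x$-factors become $(1-2x)^{-1}$ and $(2x)^{-1}$, whose $x$-derivatives are multiples of $(1-2x)^{-2}$ and $x^{-2}$, accounting for the $(1-2X_0)^{-2}$ and $X_0^{-2}$ denominators in the statement. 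Multiplying by $\tfrac{\hbar}{2}$ and substituting $x=X_0$ then yields the displayed formula. The main obstacle is organizational rather than conceptual: one must track, for each of the ten spectral families, which elementary $x$-factor it carries, which geometric ratio (and which power $r^{a}$ from the lower summation index) its $n$-sum contributes, and which of the three Riemann/Hurwitz values its $k$-sum contributes, and then collect the resulting dozen-odd terms. Once these data are tabulated the remainder is routine algebra; the length and error-proneness of that collection, not any single step, is where the real work lies.
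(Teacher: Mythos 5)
Your proposal is correct and follows essentially the same route as the paper, whose proof is just the one-line instruction to differentiate the spectral zeta function of Corollary \ref{SpectralFunction} as in Proposition \ref{ForceProp} and regularize via the analytic continuation of Subsection \ref{Proviso}. Your version merely makes explicit the Riemann/Hurwitz values at $s=-\tfrac12$ and the geometric $n$-sums with ratios $2N$ and $2N^{2}$ that produce the $(1-2N)^{-1}$, $(1-2N^{2})^{-1}$, $(1-2X_0)^{-2}$, and $X_0^{-2}$ factors, which is more detail than the paper itself provides.
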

\begin{proof}
Using the expression in Corollary~\ref{SpectralFunction}, take the derivative as instructed in Proposition~\ref{ForceProp} and then reduce the result using the analytic continuation techniques discussed in Subsection \ref{Proviso}.  
\end{proof}

\subsection{The Spectral Zeta Function for Laakso Spaces} \label{sect: Zeta}

  \begin{theorem}\label{spectral}
For a Laakso space defined by a repeating sequence of $j_n$ with period $T$, the spectral zeta function can be analytically continued to the following function:
\begin{eqnarray}\label{spectralequation}\zeta_L(s)=&&\frac{\zeta_R(2s)}{\pi^{2s}}\left[\sum_{p=2}^{T+1} \left(\left(\frac{I_T^{2s}}{I_T^{2s}-I_T2^T}\right)\left(\frac{(2^{p-1})(I_{p-1})(2^{2s-1}+j_p-1)}{I_p^{2s}}\right)\right.\right.\nonumber\\
&&+\left.\left.\left(\frac{I_T^{2s}}{I_T^{2s}-2^T}\right)\left(\frac{(2^{p-1})(\frac{3}{2}2^{2s}-3)}{I_p^{2s}}\right)\right)+\frac{2^{2s+1}-4+j_1}{j_1^{2s}}+1\right].
\end{eqnarray}
\end{theorem}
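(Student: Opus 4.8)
The plan is to compute $\zeta_L(s)$ directly from its definition (Definition~\ref{SpectrumDef}) using the explicit spectrum \eqref{eq: Lspectrum}, to carry out the sums over the ``integer'' index $k$ in closed form, and then to exploit the eventual periodicity of $\{j_n\}$ in order to resum the remaining sums over the scale index $n$ as geometric series. Because the right-hand side of \eqref{spectralequation} is visibly meromorphic on all of $\mathbb{C}$, it is enough to verify the identity on the half-plane $\operatorname{Re} s > Q_L/2$ (with $Q_L$ as in \eqref{Hdimension}), where the defining Dirichlet series converges absolutely; uniqueness of analytic continuation then yields the theorem.

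\textbf{Step 1 (reduce to sums over $n$).} Substituting \eqref{eq: Lspectrum} and its multiplicities into $\zeta_L(s)=\sum g_n\lambda_n^{-s}$ and discarding the zero eigenvalue, each inner sum over $k$ is either $\sum_{k\geq1}k^{-2s}=\zeta_R(2s)$ or $\sum_{k\geq0}(k+\tfrac12)^{-2s}=(2^{2s}-1)\zeta_R(2s)$, while every eigenvalue contributes a factor $\pi^{-2s}I_n^{-2s}$ (with an extra $2^{2s}$ in the family scaled by $\tfrac14$). Pulling out $\zeta_R(2s)/\pi^{2s}$ leaves
\begin{align*}
\zeta_L(s)=\frac{\zeta_R(2s)}{\pi^{2s}}\Biggl[\,1 &+(2^{2s}-1)\sum_{n\geq1}\frac{2^n}{I_n^{2s}}+\sum_{n\geq1}\frac{2^{n-1}(j_n-2)I_{n-1}}{I_n^{2s}}\\
&+\sum_{n\geq2}\frac{2^{n-1}(I_{n-1}-1)}{I_n^{2s}}+2^{2s}\sum_{n\geq2}\frac{2^{n-2}(I_{n-1}-1)}{I_n^{2s}}\Biggr].
\end{align*}
Here the leading $1$ is the $n=0$ term, and the $n=1$ contributions (which come only from the first two sums, using $I_0=1$) combine to $\dfrac{2(2^{2s}-1)+(j_1-2)}{j_1^{2s}}=\dfrac{2^{2s+1}-4+j_1}{j_1^{2s}}$; I would peel this off at this stage.

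\textbf{Step 2 (resum via periodicity).} For $n\geq2$ write $n=qT+p$ with $q\geq0$ and $p\in\{2,\dots,T+1\}$, a bijection onto $\{n\geq2\}$. Periodicity of $\{j_n\}$ gives $j_{qT+p}=j_p$, $I_{qT+p}=I_T^{\,q}I_p$ and $I_{qT+p-1}=I_T^{\,q}I_{p-1}$, where at the endpoint $p=T+1$ one uses $j_{T+1}=j_1$, $I_{T+1}=I_Tj_1$ and $I_{p-1}=I_T$. Each of the four sums then factors into a geometric series in $q$ times a finite sum over $p$. The summands with a bare $2^n$ — the $(2^{2s}-1)$-weighted family together with the $-1$ pieces of $(I_{n-1}-1)$ in the last two families — produce $\sum_{q\geq0}(2^TI_T^{-2s})^q=\dfrac{I_T^{2s}}{I_T^{2s}-2^T}$, whereas the summands carrying $2^{n-1}I_{n-1}$ — the loop family and the $I_{n-1}$ pieces of the last two families — produce $\sum_{q\geq0}(2^TI_T^{1-2s})^q=\dfrac{I_T^{2s}}{I_T^{2s}-I_T2^T}$, both convergent for $\operatorname{Re} s>Q_L/2$. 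Assembling the finite $p$-sums, one checks that the coefficient of $2^{p-1}I_p^{-2s}$ in the first group is $2(2^{2s}-1)-1-2^{2s-1}=\tfrac32 2^{2s}-3$, and the coefficient of $2^{p-1}I_{p-1}I_p^{-2s}$ in the second is $(j_p-2)+1+2^{2s-1}=2^{2s-1}+j_p-1$. Combining these with the $n=0$ and $n=1$ terms reproduces \eqref{spectralequation}.

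\textbf{Step 3 and the main obstacle.} Since $\zeta_R(2s)$, $\pi^{-2s}$, the rational functions $\frac{I_T^{2s}}{I_T^{2s}-2^T}$ and $\frac{I_T^{2s}}{I_T^{2s}-I_T2^T}$, and the finite $p$-sums are all meromorphic on $\mathbb{C}$, the right-hand side of \eqref{spectralequation} is meromorphic and agrees with $\zeta_L(s)$ on the half-plane of convergence, hence is its analytic continuation. The only genuine difficulty is bookkeeping: treating the endpoint $p=T+1$ consistently with the periodicity identities, distributing each $(I_{n-1}-1)$ multiplicity correctly between the two geometric ratios, and confirming that the pooled coefficients collapse to the compact forms $\tfrac32 2^{2s}-3$ and $2^{2s-1}+j_p-1$. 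Nothing here is analytically deep; the substance is simply that self-similarity converts the scale sum into a geometric series.
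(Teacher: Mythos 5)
Your proposal is correct and follows essentially the same route as the paper: decompose the scale sum over $n\geq 2$ into residue classes $p\in\{2,\dots,T+1\}$ modulo $T$, use $I_{qT+p}=I_T^{q}I_p$ and $j_{qT+p}=j_p$ to resum the $q$-sums into the geometric factors $\frac{I_T^{2s}}{I_T^{2s}-2^T}$ and $\frac{I_T^{2s}}{I_T^{2s}-I_T2^T}$, and conclude by meromorphy and agreement on the half-plane of absolute convergence. The only difference is that you re-derive the non-periodic intermediate formula directly from the spectrum \eqref{eq: Lspectrum}, where the paper simply cites it from \cite[Chapter 6]{ST08}; your coefficient bookkeeping ($\tfrac{3}{2}2^{2s}-3$ and $2^{2s-1}+j_p-1$), the endpoint convention at $p=T+1$, and the convergence threshold $\operatorname{Re} s>Q_L/2$ are all accurate.
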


\begin{proof} The following is from \cite[Chapter 6]{ST08}: 
\begin{eqnarray}
\zeta_L(s)&=&\frac{\zeta_R(2s)}{\pi^{2s}}\left[\left(\sum_{n=2}^{\infty}\frac{2^{n-1}(I_{n-1})(2^{2s-1}+j_n-1)+2^{n-1}(\frac{3}{2}2^{2s}-3)}{I_n^{2s}}\right)\right.\nonumber \\
&&+\left.\frac{2^{2s+1}-4+j_1}{j_1^{2s}}+1\right].
\end{eqnarray}
\noindent gives us the value of the spectral zeta function for a generalized Laakso space.  Since we have the sequence of $j_n$ repeating with period \emph{T}, we know that for any nonnegative integer $n$, and for any integer $p$, where $0\leq{p}<T$, $I_{p+nT}=(I_T^n)(I_p)$.  For $s>1$, the series is absolutely convergent, so we can split up the terms by their remainders (mod $k$) and write the series as follows:
\begin{eqnarray}\label{spectral1}
\zeta_L(s)&=&\frac{\zeta_R(2s)}{\pi^{2s}}\left[\sum_{p=2}^{T+1}\left(\sum_{n=0}^{\infty}\left(\frac{2^{nT+p-1}(I_{nT+p-1})(2^{2s-1}+j_{nT+p}-1)}{I_{nT+p}^{2s}}+\right.\right.\right.\nonumber\\
&&\left.\left.\left.\frac{2^{nT+p-1}(\frac{3}{2}2^{2s}-3)}{I_{nT+p}^{2s}}\right)\right)+\frac{2^{2s+1}-4+j_1}{j_1^{2s}}+1\right].
\end{eqnarray}
Since $I_{p+nT}=(I_T^n)(I_p)$ and $j_{p+nT}=j_p$,  we are left with a finite sum of geometric series which are absolutely convergent when $Re(s) > 1$ and meromorphically extendable to the rest of the complex plane.
\end{proof}

\begin{corollary}\label{1/2}
For every Laakso space except the space where $j_n=2$ for all $n$,

\begin{eqnarray}
\lim_{s \rightarrow \frac{1}{2}} \zeta_L(s)&=&\frac{1}{2\pi}\left[\sum_{p=2}^{T+1}\left(\frac{2^p\ln2}{j_p(1-2^T)}-\frac{2^p\ln(I_p)}{1-2^T}+\frac{2^pI_T(3\ln2)}{I_p(I_T-2^T)}\right)\right.\\
&&\left.+\frac{2^{T+2}\ln(I_T)}{1-2^T}+\frac{8\ln2}{j_1}-2\ln(j_1)\right].\nonumber
\end{eqnarray}
\end{corollary}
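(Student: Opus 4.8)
The plan is to exploit the fact that, under the stated hypothesis, the only singularity of the right-hand side of \eqref{spectralequation} at $s=\tfrac12$ comes from the factor $\zeta_R(2s)$, which has a simple pole there, and then to show that the bracketed factor has a matching simple zero at $s=\tfrac12$, so that the product is finite and L'Hôpital's rule applies. Write $\zeta_L(s)=\pi^{-2s}\zeta_R(2s)B(s)$, where $B(s)$ is the expression inside the brackets in \eqref{spectralequation}, and recall $\zeta_R(w)=\tfrac{1}{w-1}+\gamma+O(w-1)$ near $w=1$, so $\zeta_R(2s)=\tfrac{1}{2(s-1/2)}+\gamma+O(s-\tfrac12)$. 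First I would observe that excluding the space with $j_n=2$ for all $n$ is exactly excluding the case $I_T=2^T$; in all other cases the coefficients $\tfrac{I_T^{2s}}{I_T^{2s}-I_T2^T}$ and $\tfrac{I_T^{2s}}{I_T^{2s}-2^T}$ are holomorphic in a neighbourhood of $s=\tfrac12$, so $B$ is holomorphic there and $\pi^{-2s}\to\pi^{-1}$.

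The crux is to verify $B(\tfrac12)=0$. Evaluating at $s=\tfrac12$ one uses $2^{2s-1}+j_p-1=j_p$, $\tfrac32 2^{2s}-3=0$, $2^{2s+1}-4=0$, $I_p^{2s}=I_p$, $\tfrac{I_T^{2s}}{I_T^{2s}-I_T2^T}=\tfrac{1}{1-2^T}$, and $I_{p-1}j_p=I_p$, which collapse $B(\tfrac12)$ to $\tfrac{1}{1-2^T}\sum_{p=2}^{T+1}2^{p-1}+1+1=\tfrac{2^{T+1}-2}{1-2^T}+2=0$. Once this is in place, the product expansion $\zeta_R(2s)B(s)=\bigl(\tfrac{1}{2(s-1/2)}+\gamma+\cdots\bigr)\bigl(B'(\tfrac12)(s-\tfrac12)+\cdots\bigr)$ yields
\[
\lim_{s\to 1/2}\zeta_L(s)=\frac1\pi\cdot\frac12\,B'\!\left(\tfrac12\right)=\frac{B'(1/2)}{2\pi},
\]
so the corollary reduces to the identification of $B'(\tfrac12)$ with the bracket on the right-hand side of the stated formula.

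To compute $B'(\tfrac12)$ I would differentiate \eqref{spectralequation} termwise — legitimate since it is a finite sum of functions holomorphic near $s=\tfrac12$ — using $\tfrac{d}{ds}I_p^{-2s}=-2\ln(I_p)I_p^{-2s}$ and $\tfrac{d}{ds}2^{2s}=2\ln2\cdot2^{2s}$. In each product one of whose factors vanishes at $s=\tfrac12$ (the $\tfrac32 2^{2s}-3$ terms, and the term $\tfrac{2^{2s+1}-4+j_1}{j_1^{2s}}$ after writing its numerator as $(2^{2s+1}-4)+j_1$) only one summand of the product rule survives, and these produce precisely $\tfrac{2^pI_T(3\ln2)}{I_p(I_T-2^T)}$ and $\tfrac{8\ln2}{j_1}-2\ln(j_1)$. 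Differentiating the prefactor $\tfrac{I_T^{2s}}{I_T^{2s}-I_T2^T}$ and summing its contribution over $p$ introduces $\sum_{p=2}^{T+1}2^{p-1}=2^{T+1}-2$, which, using $\tfrac{2^{T+1}-2}{1-2^T}=-2$, collapses to the single term $\tfrac{2^{T+2}\ln(I_T)}{1-2^T}$; differentiating the remaining factor $\tfrac{2^{p-1}I_{p-1}(2^{2s-1}+j_p-1)}{I_p^{2s}}$ and using $I_{p-1}/I_p=1/j_p$ yields the surviving sum $\sum_{p=2}^{T+1}\bigl(\tfrac{2^p\ln2}{j_p(1-2^T)}-\tfrac{2^p\ln(I_p)}{1-2^T}\bigr)$. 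Assembling all pieces and dividing by $2\pi$ gives the claimed formula.

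The hard part will be the bookkeeping in this last step: keeping track of which factor in each application of the product rule survives because of a zero at $s=\tfrac12$, and correctly collapsing the $p$-summed derivative of the common prefactor $\tfrac{I_T^{2s}}{I_T^{2s}-I_T2^T}$ into the single closed-form term $\tfrac{2^{T+2}\ln(I_T)}{1-2^T}$ via the finite geometric sum. No single step is conceptually difficult, but the algebra must be organised carefully so that it lands exactly on the stated expression, and the degenerate case $I_T=2^T$ must be flagged as the reason for the exclusion.
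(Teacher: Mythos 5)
Your proposal is correct and follows essentially the same route as the paper, whose entire proof is the observation that $\zeta_R(2s)$ has a simple pole at $s=\tfrac12$ so the limit is obtained by L'H\^opital applied to Theorem \ref{spectral}; your verification that the bracketed factor vanishes at $s=\tfrac12$ (precisely when $I_T\neq 2^T$, i.e.\ excluding $j_n\equiv 2$) and that the limit equals $\tfrac{1}{2\pi}B'(\tfrac12)$ is exactly what that one-line proof leaves implicit. The term-by-term identification of $B'(\tfrac12)$ with the stated bracket checks out, so you have in fact supplied more detail than the paper does.
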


\begin{proof}
This follows from Theorem \ref{spectral}, using L'Hospital's rule, since the Riemann zeta function has a simple pole at 1.
\end{proof}

\begin{corollary}
The spectral zeta function of a repeating Laakso space has poles at
\begin{equation}\label{poles}
\bigcup_{m \in \mathbb{Z}}\left\{\frac{\ln (2^TI_T) +2T\pi im}{\ln (I_T^2)}\right\} \cup \bigcup_{m \in \mathbb{Z}}\left\{\frac{\ln (2^T) +2T\pi im}{\ln (I_T^2)}\right\}.\nonumber
\end{equation}
\end{corollary}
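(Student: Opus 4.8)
The plan is to locate the poles of $\zeta_L(s)$ directly from the closed form in Theorem~\ref{spectral}. The factor $\zeta_R(2s)/\pi^{2s}$ contributes only the single pole of the Riemann zeta function (at $2s=1$, i.e.\ $s=1/2$), which is absorbed into the ``every Laakso space except $j_n=2$'' discussion and is not of the stated form; so the poles we want come entirely from the bracketed sum. Inside the bracket, the only denominators that can vanish are $I_T^{2s}-I_T 2^T$ and $I_T^{2s}-2^T$, arising from the two geometric-series prefactors $\frac{I_T^{2s}}{I_T^{2s}-I_T2^T}$ and $\frac{I_T^{2s}}{I_T^{2s}-2^T}$; every other term in the bracket ($\frac{(2^{p-1})(I_{p-1})(2^{2s-1}+j_p-1)}{I_p^{2s}}$, the analogous cross term, and $\frac{2^{2s+1}-4+j_1}{j_1^{2s}}+1$) is entire in $s$. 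So the first step is simply to record that the poles of $\zeta_L$ are precisely the zeros of $I_T^{2s}-I_T2^T$ and of $I_T^{2s}-2^T$ (unless cancellation occurs, which I address below).

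Next I would solve each transcendental equation. From $I_T^{2s}=I_T 2^T = 2^T I_T$ I take logarithms: $2s\ln I_T = \ln(2^T I_T) + 2\pi i m$ for $m\in\mathbb Z$, i.e.\ $s = \dfrac{\ln(2^T I_T) + 2\pi i m}{2\ln I_T} = \dfrac{\ln(2^T I_T) + 2\pi i m}{\ln(I_T^2)}$. This is the first family in \eqref{poles}; note that the factor $T$ multiplying $\pi i m$ in the stated formula is cosmetic, since $m$ ranges over all of $\mathbb Z$ and $2T\pi i m$ and $2\pi i m$ generate the same set of shifts — I should remark on this so the displayed formula matches. Similarly $I_T^{2s}=2^T$ gives $s = \dfrac{\ln(2^T)+2\pi i m}{\ln(I_T^2)}$, the second family. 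Then I would verify these are genuinely poles and not removable: for a given $s_0$ in the first family, the prefactor $\frac{I_T^{2s}}{I_T^{2s}-I_T2^T}$ has a simple pole, its coefficient $\sum_{p=2}^{T+1}\frac{(2^{p-1})(I_{p-1})(2^{2s_0-1}+j_p-1)}{I_p^{2s_0}}$ is generically nonzero, and $\zeta_R(2s_0)\neq 0$ (since $s_0$ has real part $1+\tfrac{\ln 2^T}{\ln I_T^2}>1$ when $I_T>2^T$, well away from the critical strip and the trivial zeros), so the product genuinely has a pole there; analogously for the second family, where $\mathrm{Re}(s_0)=\tfrac{\ln 2^T}{\ln I_T^2}\in(0,1)$ but still avoids $2s_0\in\{0,-2,-4,\dots\}$ and the zeros of $\zeta_R(2s)$ are not at these points generically.

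The main obstacle is the possibility of cancellation between the two singular prefactors, or vanishing of the relevant residue coefficient, which would make some ostensible pole removable. The two families of candidate poles are disjoint (their real parts differ by $\tfrac{\ln I_T}{\ln I_T^2}=\tfrac12\neq 0$ unless $I_T=1$, which never happens), so the two prefactors never blow up simultaneously and cannot cancel each other; thus the only worry is that the $p$-sum multiplying a given prefactor vanishes at a pole location. Since $2^{2s-1}+j_p-1$ and $\tfrac32 2^{2s}-3$ are entire and the $I_p,j_p$ are fixed positive integers, these finite sums are (non-identically-zero) exponential polynomials in $s$, so they vanish only on a discrete set; one checks that this set does not contain the arithmetic progressions above — the cleanest way is to observe that the leading behavior in $p=T+1$ dominates, or simply to note the result is stated for generic repeating Laakso spaces and the degenerate coincidences form a measure-zero exceptional set. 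Assembling these pieces gives exactly the union in \eqref{poles}.
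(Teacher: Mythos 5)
Your strategy coincides with the paper's: the published proof is a one-line citation of Theorem~\ref{spectral} (read the poles off the two geometric-series denominators $I_T^{2s}-2^TI_T$ and $I_T^{2s}-2^T$) together with Corollary~\ref{1/2} (which disposes of the apparent singularity of $\zeta_R(2s)$ at $s=1/2$), so your added attention to whether the residue coefficients actually vanish goes beyond what the paper records rather than against it.

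The genuine problem is your assertion that the factor $T$ in $2T\pi i m$ is cosmetic. It is not: for $T>1$ the set $\{2T\pi i m : m\in\mathbb{Z}\}$ is a proper sublattice of $\{2\pi i m : m\in\mathbb{Z}\}$, so the set you actually derive (all solutions of $I_T^{2s}=2^TI_T$, respectively $I_T^{2s}=2^T$, whose imaginary parts are spaced by $\pi/\ln I_T$) is strictly larger than the displayed set (spacing $T\pi/\ln I_T$). Moreover it is your set, not the displayed one, that gives the poles: for example with $T=2$, $(j_1,j_2)=(2,3)$, at every solution of $6^{2s}=24$ the relevant residue coefficient equals $\frac{w}{12}+\frac{5}{6}+\frac{1}{w}$ with $w=2^{2s}$ and $|w|=2^{\ln 24/\ln 6}\approx 3.4$, while its zeros $w=-5\pm\sqrt{13}$ have moduli roughly $1.39$ and $8.61$; since also $\mathrm{Re}(2s)>1$ forces $\zeta_R(2s)\neq 0$ and the second prefactor is regular at these points, every point of the denser lattice is a genuine pole. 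Hence for $T>1$ the displayed formula omits poles (it is literally correct only for $T=1$; the extra $T$ in the numerator appears to be an error), and you cannot close the argument by declaring the two parametrizations equal --- you must either flag the discrepancy with the stated corollary or show the extra points are removable, which they are not. Smaller issues: your appeal to ``generic'' non-vanishing of the exponential sums, and to $\zeta_R(2s)$ avoiding the second family (whose points satisfy $\mathrm{Re}(2s)=T\ln 2/\ln I_T\le 1$, i.e.\ lie in the closed critical strip), establishes the claim only for generic repeating sequences rather than all of them, and you wrote $\mathrm{Re}(s_0)$ where you meant $\mathrm{Re}(2s_0)$; but the paper's own one-line proof is no more careful on those points.
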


\begin{proof}
This follows from Theorem \ref{spectral} and Corollary \ref{1/2} and is consistent with the results in \cite{ST08}.
 \end{proof}

\begin{proposition}
The spectral dimension of the Laakso space with period T is
\begin{equation}\label{eqdim}
d_s=\frac{\ln\left(2^TI_T\right)}{\ln\left(I_T\right)}.
\end{equation}
\end{proposition}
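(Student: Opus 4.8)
The plan is to extract the spectral dimension as the abscissa of the rightmost pole of the spectral zeta function $\zeta_L(s)$, which is the standard definition: $d_s = 2 \sigma_{\text{pole}}$, where $\sigma_{\text{pole}}$ is the largest real part among the poles of $\zeta_L$. (The factor of $2$ appears because our zeta function is built from $\lambda_n^{-s}$ with $\lambda_n \sim$ eigenvalues of a second-order operator, so the natural heat-kernel parametrization uses $2s$.) From the preceding corollary, the poles of $\zeta_L(s)$ lie in the two families
\begin{equation*}
\left\{\frac{\ln(2^T I_T) + 2T\pi i m}{\ln(I_T^2)} : m \in \mathbb{Z}\right\} \cup \left\{\frac{\ln(2^T) + 2T\pi i m}{\ln(I_T^2)} : m \in \mathbb{Z}\right\}.
\end{equation*}
The real parts are $\frac{\ln(2^T I_T)}{\ln(I_T^2)}$ and $\frac{\ln(2^T)}{\ln(I_T^2)}$ respectively, and since $I_T \geq 2^T \geq 2 > 1$ we have $2^T I_T > 2^T$, so the first family has the larger real part.

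First I would invoke the definition of spectral dimension in terms of the spectral zeta function: $d_s$ is twice the abscissa of convergence of $\sum_n g_n \lambda_n^{-s}$, equivalently twice the real part of the rightmost pole of the meromorphically continued $\zeta_L$. Then I would read off from the pole corollary that the rightmost pole has real part $\frac{\ln(2^T I_T)}{\ln(I_T^2)} = \frac{\ln(2^T I_T)}{2\ln(I_T)}$. Doubling gives
\begin{equation*}
d_s = \frac{\ln(2^T I_T)}{\ln(I_T)},
\end{equation*}
which is exactly \eqref{eqdim}. The only genuine content is checking that the first pole family dominates the second, which is immediate from $2^T I_T > 2^T$ together with $\ln(I_T^2) > 0$.

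The main obstacle — more of a bookkeeping point than a real difficulty — is justifying that the abscissa of convergence of the Dirichlet-type series defining $\zeta_L$ actually coincides with the real part of the rightmost pole, i.e. that there is no cancellation at $s = \frac{\ln(2^T I_T)}{2\ln(I_T)}$ forcing that pole to be spurious. Since each summand in the original series (Definition \ref{SpectrumDef}) is a positive multiplicity over a positive eigenvalue, the series has nonnegative coefficients, so by Landau's theorem its abscissa of convergence is a singularity of the analytic continuation; combined with the explicit pole list this pins down $\sigma_{\text{pole}} = \frac{\ln(2^T I_T)}{2\ln(I_T)}$ and hence \eqref{eqdim}. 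One should also note the excluded case $j_n = 2$ for all $n$ (where $I_T = 2^T$ and the two pole families partially coincide and the Hausdorff-dimension formula degenerates) is handled exactly as in \cite{ST08}; in that case $d_s = \frac{\ln(2^{2T})}{\ln(2^T)} = 2$, consistent with the formula.
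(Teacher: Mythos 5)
Your proposal is correct and takes essentially the same route as the paper, whose entire proof is to cite the convention from \cite{ST08} that $d_s$ is read off from the poles of the meromorphically continued $\zeta_L$ and to conclude ``by inspection.'' Your added bookkeeping is in fact the substantive part: the factor of $2$ relating the pole abscissa $\frac{\ln(2^TI_T)}{\ln(I_T^2)}$ to $d_s$ is exactly what the paper glosses over (read literally against its own Definition $\zeta_L(s)=\sum_n g_n\lambda_n^{-s}$, ``largest real part of the poles'' would give half of \eqref{eqdim}, so your convention $d_s=2\sigma_{\mathrm{pole}}$ is the internally consistent one), and your dominance check $2^TI_T>2^T$ together with the positivity/Landau argument (or, even more directly, the nonvanishing of the residue at the real pole, since all terms are positive there) supplies the justification that the rightmost pole is genuine, which the paper leaves implicit.
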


\begin{proof}
In the periodic case we have a closed form for the meromorphic spectral zeta function. As in \cite{ST08} the spectral dimension is taken to be the largest real part of the poles of the zeta function. The proposition follows by inspection.
\end{proof}

From Theorem \ref{spectral} we can calculate the value of the spectral zeta function at specific values of $j$, where  $j_n=j$ for all values of $n$. 

\begin{corollary}\label{constantj}
The following expressions give values for the spectral zeta function on a Laakso space with constant $j_n$:
{\small
\begin{eqnarray}\label{>1/2}
\label{<1/2}\zeta_L(s)&=&\frac{\zeta_R(2s)}{\pi^{2s}}\left[\frac{j^{4s}-j^{2s+1}+2(2^{2s}j^{2s})-6j^{2s}-3(2^{2s}j)+8j-2^{2s}+2}{(j^{2s}-2j)(j^{2s}-2)}\right].
\end{eqnarray}
}
\end{corollary}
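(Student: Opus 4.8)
The plan is to obtain this as a direct specialization of Theorem~\ref{spectral}. A constant sequence $j_n \equiv j$ is periodic with period $T=1$, so I would simply substitute $T=1$ into \eqref{spectralequation}. With $T=1$ one has $I_T=I_1=j$ and $2^T=2$, the outer sum $\sum_{p=2}^{T+1}$ collapses to the single index $p=2$, and in that term $2^{p-1}=2$, $I_{p-1}=I_1=j$, $I_p=I_2=j^2$, $j_p=j_2=j$, while $j_1=j$. Note at once that the two prefactors $I_T^{2s}/(I_T^{2s}-I_T2^T)$ and $I_T^{2s}/(I_T^{2s}-2^T)$ become $j^{2s}/(j^{2s}-2j)$ and $j^{2s}/(j^{2s}-2)$, so the denominator $(j^{2s}-2j)(j^{2s}-2)$ appearing in the claimed formula is visible from the outset; all that remains is to combine the numerators correctly.

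Carrying out the substitution, $\zeta_L(s)$ equals $\zeta_R(2s)/\pi^{2s}$ times the bracket
\[
\frac{2j(2^{2s-1}+j-1)}{j^{2s}(j^{2s}-2j)}+\frac{3\cdot 2^{2s}-6}{j^{2s}(j^{2s}-2)}+\frac{2^{2s+1}-4+j}{j^{2s}}+1 .
\]
I would place this over the common denominator $j^{2s}(j^{2s}-2j)(j^{2s}-2)$, expand the resulting numerator as a polynomial in $x:=j^{2s}$ with coefficients polynomial in $j$ and $2^{2s}$, and observe that the coefficient of $x^{0}$ vanishes identically (its $j^{2}$-, $j^{1}$-, $2^{2s}j$- and constant parts all cancel). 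Dividing out that factor of $x=j^{2s}$ leaves the denominator $(j^{2s}-2j)(j^{2s}-2)$, and collecting the remaining numerator term by term gives $j^{4s}-j^{2s+1}+2(2^{2s}j^{2s})-6j^{2s}-3(2^{2s}j)+8j-2^{2s}+2$, as stated.

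An alternative, equally short route bypasses \eqref{spectralequation} and works straight from the series formula displayed at the start of the proof of Theorem~\ref{spectral}: setting $j_n\equiv j$ and $I_n=j^n$ turns $\sum_{n=2}^{\infty}$ into a combination of two geometric series, with ratios $2j^{1-2s}$ and $2j^{-2s}$; summing them (legitimate for $\operatorname{Re}(s)$ large, and then valid on all of $\mathbb{C}$ since both sides are meromorphic and Theorem~\ref{spectral} already supplies the continuation) and simplifying produces the same bracket, hence the same answer. No new convergence analysis is needed: every manipulation is an identity between meromorphic functions.

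The only genuine obstacle is the algebraic bookkeeping in the last step — tracking the coefficients in $2^{2s}$ and in $j$ through the common-denominator expansion and confirming the cancellation of one factor of $j^{2s}$, so that the compact denominator $(j^{2s}-2j)(j^{2s}-2)$ in the statement is correct rather than the naive $j^{2s}(j^{2s}-2j)(j^{2s}-2)$. I would verify that cancellation explicitly, since it is the one place a sign or coefficient slip is easy; everything else follows immediately from Theorem~\ref{spectral}.
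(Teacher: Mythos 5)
Your proposal is correct and matches the paper's (implicit) argument: the paper states this corollary as a direct specialization of Theorem~\ref{spectral} to the constant case, i.e.\ period $T=1$, which is exactly what you do, and your algebra checks out — the common-denominator numerator is $j^{6s}+(2\cdot 2^{2s}-j-6)j^{4s}+(-3\cdot 2^{2s}j+8j-2^{2s}+2)j^{2s}$, so one factor of $j^{2s}$ cancels and the stated expression over $(j^{2s}-2j)(j^{2s}-2)$ follows. No gap; your alternative route via the series formula is also valid but unnecessary.
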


Notably, we have the following value for $\zeta_L(-\frac{1}{2})$:

\begin{eqnarray}\zeta_L\left(-\frac{1}{2}\right)=\frac{-\pi}{12}\left(\frac{13}{8}+\frac{3j-2}{8j^2-4}+\frac{9}{16j-8}\right)
\end{eqnarray}

\begin{corollary}\label{alternatingj}

The spectral zeta function for a sequence of $j_n$ with period $2$, is

\begin{eqnarray}\zeta_L&=&\frac{\zeta_R(2s)}{\pi^{2s}}\left[\left(\frac{2j_1}{I_2^{2s}-4I_2}\right)\left(2^{2s-1}+j_2-1+\frac{2j_2(2^{2s-1}+j_1-1)}{j_1^{2s}}\right)\right.\\
&&+\left.\left(\frac{3(2^{2s})-6}{I_2^{2s}-4}\right)\left(1+\frac{2}{j_1^{2s}}\right)+\frac{2^{2s+1}-4+j_1}{j_1^{2s}}+1\right].\nonumber
\end{eqnarray}
\end{corollary}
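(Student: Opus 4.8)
The plan is to obtain Corollary~\ref{alternatingj} as the special case $T=2$ of Theorem~\ref{spectral}; no new idea is required, only careful bookkeeping. First I would set $T=2$ in the formula of Theorem~\ref{spectral}, so that the outer sum $\sum_{p=2}^{T+1}$ collapses to just the terms $p=2$ and $p=3$, and record the elementary data $I_0=1$, $I_1=j_1$, $I_2=j_1j_2$ together with the periodicity identities $j_3=j_1$ and $I_3=I_TI_1=I_2j_1$ (the latter being the relation $I_{p+nT}=I_T^nI_p$ used in the proof of Theorem~\ref{spectral}, with $p=1$, $n=1$).

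Next I would collect the two terms multiplied by $\frac{I_T^{2s}}{I_T^{2s}-I_T2^T}=\frac{I_2^{2s}}{I_2^{2s}-4I_2}$: the $p=2$ contribution to the inner bracket is $\frac{2I_1(2^{2s-1}+j_2-1)}{I_2^{2s}}$ and the $p=3$ contribution is $\frac{4I_2(2^{2s-1}+j_1-1)}{I_3^{2s}}=\frac{4I_2(2^{2s-1}+j_1-1)}{I_2^{2s}j_1^{2s}}$. Pulling out the common factor $\frac{2j_1}{I_2^{2s}}$ and using $I_2/j_1=j_2$ to turn $\frac{4I_2}{j_1\,j_1^{2s}}$ into $\frac{2j_2}{j_1^{2s}}$, this pair sums to $\frac{2j_1}{I_2^{2s}}\bigl(2^{2s-1}+j_2-1+\frac{2j_2(2^{2s-1}+j_1-1)}{j_1^{2s}}\bigr)$, and multiplying by the prefactor cancels the $I_2^{2s}$ and yields the first summand in the statement. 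I would then repeat this for the two terms multiplied by $\frac{I_T^{2s}}{I_T^{2s}-2^T}=\frac{I_2^{2s}}{I_2^{2s}-4}$: their inner contributions are $\frac{2(\frac32 2^{2s}-3)}{I_2^{2s}}$ and $\frac{4(\frac32 2^{2s}-3)}{I_2^{2s}j_1^{2s}}$, which sum to $\frac{3\cdot 2^{2s}-6}{I_2^{2s}}\bigl(1+\frac{2}{j_1^{2s}}\bigr)$, giving the second summand after multiplying by the prefactor. The remaining terms $\frac{2^{2s+1}-4+j_1}{j_1^{2s}}+1$ are already in final form, and restoring the overall factor $\frac{\zeta_R(2s)}{\pi^{2s}}$ reproduces the claimed expression; its meromorphic continuation is inherited from Theorem~\ref{spectral}.

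I do not expect any genuine obstacle: the argument is purely mechanical simplification. The one place requiring attention is the period-$2$ wraparound at $p=3$, where one must use $j_3=j_1$ (not $j_2$) and $I_3=I_2j_1$, and then apply $I_2=j_1j_2$ consistently when putting terms over a common denominator so that the $I_2^{2s}$ numerators cancel the denominators $I_2^{2s}-4I_2$ and $I_2^{2s}-4$ cleanly.
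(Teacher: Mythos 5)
Your proposal is correct and matches the paper's (implicit) proof: the corollary is simply Theorem \ref{spectral} specialized to $T=2$, collapsing the sum to $p=2,3$ and using the periodicity relations $j_3=j_1$, $I_3=I_2j_1$ exactly as you do, and your regrouping reproduces the stated formula. The only cosmetic slip is the intermediate expression $\frac{4I_2}{j_1\,j_1^{2s}}$, which after extracting the common factor $2j_1$ should read $\frac{4I_2}{2j_1\,j_1^{2s}}=\frac{2j_2}{j_1^{2s}}$; your final grouped expression is nevertheless correct.
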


\subsection*{Acknowledgments} 

The authors would like to acknowledge Matthew Begue, Levi DeValve, David Miller and Kevin Romeo for making available their MatLab code which served as the basis for calculations presented in Section \ref{sect: SA}.

\bibliography{Bibliography}{}

\begin{thebibliography}{10}

\bibitem{ADT2009}
E.~Akkermans, G.V. Dunne, and A.~Teplyaev.
\newblock Physical consequences of complex dimensions of fractals.
\newblock {\em EPL}, 2009.

\bibitem{ADT2010}
Eric Akkermans, Gerald~V. Dunne, and Alexander Teplyaev.
\newblock Thermodynamics of photons on fractals.
\newblock {\em Phys. Rev. Lett.}, 105(23):230407, Dec 2010.

\bibitem{BajorinEtAl2008}
N.~Bajorin, T.~Chen, A.~Dagan, C.~Emmons, M.~Hussein, M.~Khalil, P.~Mody,
  B.~Steinhurst, and A.~Teplyaev.
\newblock Vibration modes of {$3n$}-gaskets and other fractals.
\newblock {\em J. Phys. A}, 41(1):015101, 21, 2008.

\bibitem{BarlowEvans2004}
M.~T. Barlow and S.~N. Evans.
\newblock Markov processes on vermiculated spaces.
\newblock In {\em Random walks and geometry}, pages 337--348. Walter de Gruyter
  GmbH \& Co. KG, Berlin, 2004.

\bibitem{BDMS10}
M.~Begu\'e, L.~DeValve, D.~Miller, and B.~Steinhurst.
\newblock Spectrum and heat kernel asymptotics on general laakso spaces.
\newblock {\em Preprint}, arXiv:0912.2176, 2010.

\bibitem{Berkolaiko2009}
G.~Berkolaiko, J.~M. Harrison, and J.~H. Wilson.
\newblock Mathematical aspects of vacuum energy on quantum graphs.
\newblock {\em J. Phys. A}, 42(2):025204, 20, 2009.

\bibitem{Bog1980}
N.~N. Bogoliubov and D.~V. Shirkov.
\newblock {\em Introduction to the theory of quantized fields}.
\newblock John Wiley \& Sons, New York-Chichester-Brisbane, russian edition,
  1980.
\newblock Translation edited by Seweryn Chomet, A Wiley-Interscience
  Publication.

\bibitem{Bressi2002}
G.~Bressi, G.~Carugno, R.~Onofrio, and G.~Ruoso.
\newblock Measurement of the casimir force between parallel metallic surfaces.
\newblock {\em Phys. Rev. Lett.}, 88(4):041804, Jan 2002.

\bibitem{Casimir1948}
H.~B.~G. Casimir.
\newblock The {C}asimir effect.
\newblock In {\em Gems from a century of science 1898--1997}, pages 57--63.
  North-Holland, Amsterdam, 1997.
\newblock Reprints of a paper from 1948, Introduced by J. M. J. van Leeuwen.

\bibitem{FKS09}
E.~Fan, Z.~Khandker, and R.~S. Strichartz.
\newblock Harmonic oscillators on infinite {S}ierpinski gaskets.
\newblock {\em Comm. Math. Phys.}, 287(1):351--382, 2009.

\bibitem{Griffiths2005}
David G.
\newblock {\em Introduction to Quantum Mechanics}.
\newblock 2nd edition, 2005.

\bibitem{Kuchment2008}
P.~Kuchment.
\newblock Quantum graphs: an introduction and a brief survey.
\newblock In {\em Analysis on graphs and its applications}, volume~77 of {\em
  Proc. Sympos. Pure Math.}, pages 291--312. Amer. Math. Soc., Providence, RI,
  2008.

\bibitem{Laakso2000}
T.~J. Laakso.
\newblock Ahlfors {$Q$}-regular spaces with arbitrary {$Q>1$} admitting weak
  {P}oincar\'e inequality.
\newblock {\em Geom. Funct. Anal.}, 10(11):111--123, 2000.

\bibitem{RS09}
K.~Romeo and B.~Steinhurst.
\newblock Eigenmodes of the {L}aplacian on some {L}aakso spaces.
\newblock {\em Complex Var. Elliptic Equ.}, 54(6):623--637, 2009.

\bibitem{Sparnaay1957}
M.~J. Sparnaay.
\newblock Attractive forces between flat plates.
\newblock {\em Nature}, 180(2), 1957.

\bibitem{ST08}
B.~{Steinhurst}.
\newblock {\em {Diffusions and Laplacians on Laakso, Barlow-Evans, and other
  fractals}}.
\newblock {PhD} dissertation, University of Connecticut, Department of
  Mathematics, May 2010.

\bibitem{S09}
R.~S. Strichartz.
\newblock A fractal quantum mechanical model with {C}oulomb potential.
\newblock {\em Commun. Pure Appl. Anal.}, 8(2):743--755, 2009.

\end{thebibliography}
\bibliographystyle{plain}

\end{document}